\apptocmd{\sloppy}{\hbadness 10000\relax}{}{} 
\theoremstyle{plain}
\newtheorem{Theorem}{Theorem}[section]
\newtheorem{Corollary}[Theorem]{Corollary}
\newtheorem{Lemma}[Theorem]{Lemma}
\newtheorem{Proposition}[Theorem]{Proposition}
\theoremstyle{definition}
\newtheorem{Definition}[Theorem]{Definition}
\newtheorem{Remark}[Theorem]{Remark}
\newtheorem{Conjecture}[Theorem]{Conjecture}
\newtheorem{Question}[Theorem]{Question}
\newcommand{\thmref}[1]{Theorem~\ref{#1}}
\newcommand{\propref}[1]{Proposition~\ref{#1}}
\newcommand{\remref}[1]{Remark~\ref{#1}}
\newcommand{\lemref}[1]{Lemma~\ref{#1}}
\newcommand{\questref}[1]{Question~\ref{#1}}
\newcommand{\conjref}[1]{Conjecture~\ref{#1}}
\newcommand{\sectionref}[1]{Section~\ref{#1}}
\newcommand{\defref}[1]{Definition~\ref{#1}}
\newcommand{\corref}[1]{Corollary~\ref{#1}}
\newcommand{\hs}{\hspace{0.5cm}}
\newcommand{\neghs}{\mkern-12mu}
\DeclareMathOperator{\GL}{GL}
\DeclareMathOperator{\SL}{SL}
\DeclareMathOperator{\Gal}{Gal}
\DeclareMathOperator{\Frob}{Frob}
\DeclareMathOperator{\Aut}{Aut}
\DeclareMathOperator{\odd}{odd}
\newcommand{\N}{\mathbb{N}}
\newcommand{\Z}{\mathbb{Z}}
\newcommand{\Q}{\mathbb{Q}}
\newcommand{\R}{\mathbb{R}}
\newcommand{\F}{\mathbb{F}}
\newcommand{\fp}{\mathbf{p}}
\newcommand{\bigO}{\mathbf{O}}
\DeclareMathOperator{\lcm}{lcm}
\DeclareMathOperator{\rad}{rad}
\DeclareMathOperator{\Cyc}{cyc}
\DeclareMathOperator{\Prime}{prime}
\renewcommand*{\p@equation}{}
\newcommand{\Eab}{\mathbb{E}^{a,b}}
\def\det{\operatorname{det}}
\def\tr{\operatorname{tr}}
\title[Opposing Average Congruence Class Biases in the Cyclicity and Koblitz Conjectures]{Opposing Average Congruence Class Biases in the Cyclicity and Koblitz Conjectures for Elliptic Curves} 
\author[Lee]{Sung Min Lee}
\address{Sung Min Lee, College of Arts and Sciences, Dakota State University, Madison, SD 57042}
\email{john.lee1@dsu.edu}
\author[Mayle]{Jacob Mayle}
\address{Jacob Mayle, Department of Mathematics, Wake Forest University, Winston-Salem, NC 27109}
\email{maylej@wfu.edu}
\author[Wang]{Tian Wang}
\address{Tian Wang, Max Planck Institute for Mathematics, Vivatsgasse 7, 53111 Bonn, Germany}
\email{twang@mpim-bonn.mpg.de}
\date{\today}
\subjclass[2010]{Primary 11G05; Secondary 11F80.}
\begin{document}

\begin{abstract}

 The cyclicity and Koblitz conjectures ask about the distribution of primes of cyclic and prime-order reduction, respectively, for elliptic curves over $\mathbb{Q}$. In 1976, Serre gave a conditional proof of the cyclicity conjecture, but the Koblitz conjecture (refined by Zywina in 2011) remains open. The conjectures are now known unconditionally ``on average'' due to work of Banks--Shparlinski and Balog--Cojocaru--David. Recently, there has been a growing interest in the cyclicity conjecture for primes in arithmetic progressions (AP), with relevant work by Akbal--G\"ulo\u{g}lu and Wong. In this paper, we adapt Zywina's method to formulate the Koblitz conjecture for primes in AP and refine a theorem of Jones to establish results on the moments of the constants in both the cyclicity and Koblitz conjectures for AP. In doing so, we uncover a somewhat counterintuitive phenomenon: On average, these two constants are oppositely biased over congruence classes. Finally, in an accompanying repository, we give Magma code for computing the constants discussed in this paper.
\end{abstract}

\maketitle
\section{Introduction}
Let $E$ be an elliptic curve defined over the rationals. Let $N_E$ denote the conductor of $E$. For a prime $p$ not dividing $N_E$ (called a \emph{good prime} for $E$), we write $\widetilde{E}_p$ to denote the reduction of $E$ modulo $p$. The curve $\widetilde{E}_p$ is an elliptic curve over the finite field $\F_p$. Hence, the set of $\mathbb{F}_p$-points, denoted $\widetilde{E}_p(\F_p)$, forms a finite abelian group. It is known that
\[
\widetilde{E}_p(\F_p) \simeq \Z/d_p(E)\Z \oplus \Z/e_p(E) \Z
\quad \text{and} \quad
p + 1 - 2\sqrt{p} \leq |\widetilde{E}_p(\F_p)| \leq p + 1 + 2\sqrt{p}
\]
for some positive integers $d_p(E)$ and $e_p(E)$, which we take to satisfy $d_p(E) \mid e_p(E)$.

There has been considerable interest, dating back to the 1970s, in studying the distribution of primes $p$ for which  $\widetilde{E}_p(\F_p)$ has certain properties. In particular, one defines a good prime $p$ to be of \textit{cyclic reduction} for $E$ if $\widetilde{E}_p(\F_p)$ is a cyclic group and of \textit{Koblitz reduction} for $E$ if $|\widetilde{E}_p(\F_p)|$ is a prime. It is worth noting that every prime $p$ of Koblitz reduction is also of cyclic reduction since every group of prime order is cyclic. Let $\mathcal{X}$ be either ``$\Cyc$" or ``$\Prime$" and $\mathcal{X}_E(p)$ be either ``$p$ is of cyclic reduction'' or ``$p$ is of Koblitz reduction'' for $E$, respectively.
Define the counting function
\[
    \pi_E^{\mathcal{X}}(x) \coloneqq \#\{ p \leq x : p \nmid N_E \text{ and $\mathcal{X}_E(p)$ holds}\}.\\
\]
The problem of determining asymptotics for $\pi_E^{\mathcal{X}}(x)$ is called the \emph{cyclicity problem} or \emph{Koblitz's problem}, depending on the context. 

In recent years, there has been a growing interest in studying the two problems just described for primes lying in arithmetic progressions. To discuss this, fix integers $n,k$ with $n \geq 1$ and define
\[
\pi_E^{\mathcal{X}} (x;n,k) \coloneqq \#\{p \leq x : p\equiv k \neghs \pmod n, \, p \nmid N_E, \text{ and $\mathcal{X}_E(p)$ holds}\}. \]
Note that if $n$ and $k$ share a common factor, then $\pi_E^{\mathcal{X}} (x;n,k)$ is bounded as $x \to \infty$ for the trivial reason that there are only finitely many primes congruent to $k$ modulo $n$. As such, we will always take the integers $n$ and $k$ to be coprime. Broadly speaking, the goal of this paper is to examine the constants that appear in the main term of the asymptotics of $\pi_E^{\mathcal{X}}(x;n,k)$ and explore how they are influenced by the choice of $k$ modulo $n$. Before introducing our contributions, we outline aspects of the rich history of the cyclicity and Koblitz's problems relevant to our work.

We begin with the cyclicity problem, which has its origin in 1975 when I.\ Borosh, C.\ J.\ Moreno, and H.\ Porta \cite[pp.\ 962--963]{MR0404264} speculated that the density of primes of cyclic reduction exists and can be expressed as an Euler product.\footnote{In a related direction, S.\ Lang and H.\ Trotter \cite{MR427273} studied the density of primes $p$ for which the reduction of a given rational point $P$ on $E$ generates $\widetilde{E}_p(\mathbb{F}_p)$.} In 1976, J.-P.\ Serre \cite{MR3223094} observed that the cyclicity problem bears an alluring resemblance to Artin's primitive root conjecture, which was proven under the Generalized Riemann Hypothesis (GRH) by C.\ Hooley \cite{MR0207630} a decade prior. With this insight, Serre proposed the following conjecture, which he proved as a theorem under GRH.
\begin{Conjecture}[\protect{Cyclicity Conjecture \cite[pp.\ 465--468]{MR3223094}}]\label{Cyclicity} 
If $E/\Q$ is an elliptic curve, then 
\begin{equation}\label{cyclicasymptotic}
   \pi_E^{\Cyc}(x) \sim C^{\Cyc}_E \cdot \frac{x}{\log x}, 
\end{equation}
as $x \to \infty$, where $C^{\Cyc}_E \geq 0$ is the explicit constant defined in \eqref{CcycE}.
\end{Conjecture}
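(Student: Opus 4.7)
The plan is to follow Serre's adaptation of Hooley's conditional proof of Artin's primitive root conjecture. The first step is to rephrase the cyclicity condition in Galois-theoretic terms. Writing $\widetilde{E}_p(\F_p) \simeq \Z/d_p(E)\Z \oplus \Z/e_p(E)\Z$, the group is non-cyclic precisely when some prime $\ell$ divides $d_p(E)$, which is equivalent to $\widetilde{E}_p[\ell] \subseteq \widetilde{E}_p(\F_p)$, i.e.\ to $p$ splitting completely in the $\ell$-division field $\Q(E[\ell])$. More generally, for a squarefree integer $m$, the condition $m \mid d_p(E)$ is equivalent to $p$ splitting completely in $\Q(E[m])$. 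Writing $\pi_E(x,m) \coloneqq \#\{p \le x : p \nmid m N_E,\ p \text{ splits completely in } \Q(E[m])\}$, Möbius inversion then gives the exact identity
\[
\pi_E^{\Cyc}(x) = \sum_{m=1}^{\infty} \mu(m)\, \pi_E(x,m),
\]
the sum being finite for each $x$ since $m \mid d_p(E)$ forces $m \le 2\sqrt{p}+1$.

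The second step is to apply effective Chebotarev, which under GRH for the Dedekind zeta function of $\Q(E[m])$ yields
\[
\pi_E(x,m) = \frac{\pi(x)}{[\Q(E[m]):\Q]} + O\!\left(\sqrt{x}\log(m N_E x)\right).
\]
Combined with the well-known near-surjectivity of the mod-$m$ Galois representation (Serre's open image theorem, giving $[\Q(E[m]):\Q] \gg m^4/\text{const}$ for $\gcd(m,\text{bad})=1$ and general lower bounds for the ramified primes), one chooses a truncation parameter $y = y(x)$ and splits the sum into $m \le y$ (main term with Chebotarev error) and $y < m \le 2\sqrt{x}+1$ (tail).

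The third step is the sieve estimate. For $m \le y$, summing the Chebotarev expansion produces
\[
\sum_{m \le y} \mu(m)\, \pi_E(x,m) = \pi(x) \sum_{m \le y} \frac{\mu(m)}{[\Q(E[m]):\Q]} + O\!\left(y \sqrt{x}\log(y N_E x)\right),
\]
and the infinite series defining $C_E^{\Cyc}$ converges absolutely because of the $m^{-4}$-type bound on the inverse degree. The tail $\sum_{y < m \le 2\sqrt{x}+1} \mu(m) \pi_E(x,m)$ is controlled by an unconditional counting argument: the number of primes $p \le x$ for which some prime $\ell > y$ divides $d_p(E)$ can be bounded, after separating the ranges $y < \ell \le \log x$, $\log x < \ell \le x^{1/4}$, and $\ell > x^{1/4}$, by combining elementary bounds, Chebotarev in the first range, and the observation that $\ell^2 \mid |\widetilde{E}_p(\F_p)|$ restricts $p$ to $O(x/\ell^2)$ primes.

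Balancing these error terms (the natural choice is $y \approx x^{1/2}/(\log x)^2$) yields $\pi_E^{\Cyc}(x) = C_E^{\Cyc} \cdot x/\log x + o(x/\log x)$, proving the asymptotic. The main obstacle will be the intermediate range of the tail sieve, where GRH-Chebotarev by itself is too weak; this is exactly where Hooley's original argument in the primitive root problem requires its most delicate estimates, and Serre adapts that idea using the $\ell^2 \mid |\widetilde{E}_p(\F_p)|$ divisibility to trade a $\pi(x)/\ell$-type contribution for an $x/\ell^2$-type contribution, making the tail summable. Verifying that $C_E^{\Cyc} = 0$ exactly when there is an obstruction (e.g.\ a fixed rational isogeny forcing $2 \mid d_p(E)$ for all good $p$) is a separate but elementary check on the Euler factor at each prime $\ell$.
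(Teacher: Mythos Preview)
The statement you are attempting to prove is labeled a \emph{Conjecture} in the paper, and the paper does not provide a proof of it. The paper only records that Serre proposed it and proved it as a theorem \emph{under GRH}; the unconditional asymptotic remains open. So there is nothing in the paper to compare your argument against.

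Your sketch is a reasonable outline of Serre's conditional argument (adapting Hooley), and as such it proves the GRH-conditional version, not the conjecture as stated. A few small points: the degree lower bound $[\Q(E[m]):\Q] \gg m^4$ holds only in the non-CM case; for CM curves the bound is $\gg m^2$, which still suffices for absolute convergence of the series but changes the error analysis. Your characterization of the vanishing $C_E^{\Cyc}=0$ is slightly off: as the paper notes right after the conjecture, the vanishing occurs exactly when $\Q(E[2])=\Q$ (i.e.\ full rational $2$-torsion), not merely when there is a rational isogeny. Finally, you should be explicit that GRH is being assumed for the Dedekind zeta functions of all the division fields $\Q(E[m])$; without this hypothesis the Chebotarev error terms are not strong enough, and indeed the conjecture is not known unconditionally.
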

Serre noted that $C^{\Cyc}_E = 0$ if and only if $\Q(E[2]) = \Q$, in which case we interpret \eqref{cyclicasymptotic} as stating that $\pi_E^{\Cyc}(x)$ is bounded as $x \to \infty$. Numerous authors have made contributions to the cyclicity problem. See, for example, \cite{MR0698163, MR1055716, MR1932460, MR2570668, MR2534114, MR4177274, fredericks2021average, MR1975393, MR4623047} for some recent work on the problem.

In 2022, Y.\ Akbal and A.\ M.\ G\"ulo\u{g}lu \cite{MR4504664} studied the cyclicity problem for primes lying in an arithmetic progression. They proved that, under GRH,
\begin{equation}\label{cyclicasymptoticAP}
    \pi_E^{\Cyc}(x;n,k) \sim C^{\Cyc}_{E,n,k} \cdot \frac{x}{\log x},
\end{equation}
as $x \to \infty$, where $C^{\Cyc}_{E,n,k}$ is the explicit constant defined in \eqref{CcycEnk-def}. As before, if $C^{\Cyc}_{E,n,k} = 0$, then we interpret \eqref{cyclicasymptoticAP} as stating that $\pi_E^{\Cyc}(x;n,k)$ is bounded as $x \to \infty$. In 2015, J.\ Brau \cite{Brau} obtained a formula for the constant $C^{\Cyc}_{E,n,k}$ for all Serre curves outside of a small class (see \remref{Brauresults}). N.\ Jones and the first author determined all the possible scenarios in which the constant $C^{\Cyc}_{E,n,k}$ vanishes \cite{Jones-Lee}. Additionally, P.-J.\ Wong established \eqref{cyclicasymptoticAP} unconditionally for CM elliptic curves \cite{MR4765791}.

While Conjecture \ref{cyclicasymptotic} remains open without assuming GRH, researchers have found success in proving the conjecture is true ``on average'' in 
various senses. As observed in \cite[Remark 7(v)]{MR4041152}, there are two broad approaches regarding the average results. One approach is to compute the density of elliptic curves $E$ over $\F_p$ for which $E(\F_p)$ is cyclic, and average it over all primes $p$. Another approach is to count the number of primes for which an elliptic curve over $\Q$ has cyclic reduction and then average over the family of elliptic curves ordered by height. The former is called the ``local'' viewpoint while the latter is called the ``global'' viewpoint.

In 1999, 
S.\ G.\ Vl\u{a}du\c{t} \cite{MR1667099} obtained some statistics related to the cyclicity problem for elliptic curves over finite fields. In particular, he 
determined the ratio
\begin{equation}\label{localcyc}
    \frac{\#\{E \in \mathcal{F}_p : E(\F_p)\text{ is cyclic}\}}{\#\mathcal{F}_p},
\end{equation}
where $\mathcal{F}_p$ denotes the set of isomorphism classes of elliptic curves over $\F_p$. Later, E.-U.\ Gekeler \cite{MR2226271} built upon this result to obtain the local result for the average cyclicity problem; he computed that the average of \eqref{localcyc} over all primes is $C^{\Cyc}$, which is defined by \eqref{Ccyc}.

In 2009, building upon Vl\u{a}du\c{t}'s work, W.\ D.\ Banks and I.\ E.\ Shparlinski \cite{MR2570668} deduced a global result for the average cyclicity problem and demonstrated that it aligns with Gekeler's local result.  To set notation: For positive real numbers $A$ and $B$, let $\mathcal{F} \coloneqq \mathcal{F}(A,B)$ denote the family of elliptic curves $E/\Q$ defined by some short Weierstrass model 
\begin{equation}\label{basicmodel}
    E \colon Y^2 = X^3 + aX + b,
\end{equation}
with $a,b \in \mathbb{Z}$ satisfying $|a| \leq A$ and $|b|\leq B$. Banks and Shparlinski proved the following.
\begin{Theorem}[\protect{\cite[Theorem 18]{MR2570668}}]\label{BanksShparlinski}
    Let $x > 0, \epsilon >0$, and $K > 0$. Let $A \coloneqq A(x)$ and $B \coloneqq B(x)$ be parameters satisfying $x^{\epsilon} \leq A, B \leq x^{1-\epsilon},$ and $AB \geq x^{1+\epsilon}$. Then, we have
    $$\frac{1}{|\mathcal{F}|}\sum_{E\in \mathcal{F}} \pi_E^{\Cyc}(x) \sim C^{\Cyc} \cdot \frac{x}{\log x}, \hs \text{ as }  x\to \infty.$$
\end{Theorem}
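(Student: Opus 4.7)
The plan is to detect cyclicity via a Möbius sieve on the invariant factor $d_p(E)$, swap the order of summation, and then exploit equidistribution of the family $\mathcal{F}$ modulo $p$ together with the Vl\u{a}du\c{t}--Gekeler local density computation.

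First, I observe that $\widetilde{E}_p(\F_p)$ is cyclic if and only if $d_p(E) = 1$, which by Möbius inversion gives
\begin{equation*}
    \mathbf{1}[\widetilde{E}_p(\F_p) \text{ cyclic}] = \sum_{m \geq 1} \mu(m) \mathbf{1}[m \mid d_p(E)] = \sum_{m \geq 1} \mu(m) \mathbf{1}[\widetilde{E}_p[m] \subseteq \widetilde{E}_p(\F_p)].
\end{equation*}
Since $\widetilde{E}_p[m] \subseteq \widetilde{E}_p(\F_p)$ forces $m^2 \mid |\widetilde{E}_p(\F_p)|$, the Hasse bound restricts the sieve to $m \leq 2\sqrt{p}+2$. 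Substituting this indicator, interchanging the order of summation over $E$, $p$, and $m$, and absorbing the negligible contribution of pairs $(E,p)$ with $p \mid N_E$, the task reduces to estimating
\begin{equation*}
    \sum_{p \leq x} \sum_{m \leq 2\sqrt{p}+2} \mu(m)\, N(p, m), \qquad N(p, m) \coloneqq \#\{E \in \mathcal{F} : p \nmid N_E,\; \widetilde{E}_p[m] \subseteq \widetilde{E}_p(\F_p)\}.
\end{equation*}

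The central step is the evaluation of $N(p, m)$. Because the condition $\widetilde{E}_p[m] \subseteq \widetilde{E}_p(\F_p)$ depends only on $(a \bmod p, b \bmod p)$, I would partition $\mathcal{F}$ according to residue class modulo $p$. For $p$ not too large relative to $\min(A,B)$, the number of integer pairs in $[-A,A] \times [-B,B]$ falling into a fixed class is $4AB/p^2 + O((A+B)/p)$, which yields
\begin{equation*}
    N(p, m) = \frac{4AB}{p^2}\cdot M(p, m) + \text{error}, \qquad M(p,m) \coloneqq \#\{\widetilde{E} \in \mathcal{F}_p : \widetilde{E}[m] \subseteq \widetilde{E}(\F_p)\}.
\end{equation*}
Vl\u{a}du\c{t}'s formula evaluates $M(p,m)/p^2$ explicitly; consequently $\sum_m \mu(m) M(p,m)/p^2$ is precisely the local cyclicity density at $p$, and Gekeler's averaging of these densities over all primes produces the constant $C^{\Cyc}$. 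Partial summation against the prime number theorem then converts the sum into the desired main term $C^{\Cyc} \cdot x/\log x$.

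The principal obstacle will be controlling the error uniformly across the whole prime range $p \leq x$. Once $p$ exceeds $\min(A,B)$, the lattice-point count in the $(a,b)$-box no longer approximates $4AB/p^2$ in each residue class, so the equidistribution step must be replaced by Weil-type bounds on incomplete exponential or character sums in the two variables $(a,b) \in \F_p^2$ that detect the $m$-torsion condition; extracting square-root cancellation there is exactly what forces the hypotheses $x^\epsilon \leq A, B \leq x^{1-\epsilon}$ and $AB \geq x^{1+\epsilon}$. A secondary difficulty is truncating the Möbius sieve: one must combine the constraint $m \mid p-1$ (necessary for $M(p,m) \neq 0$) with Vl\u{a}du\c{t}'s bounds on $M(p,m)$ to show that the contribution of large $m$ is $o(x/\log x)$ after averaging over $\mathcal{F}$, which is what allows the initial truncation at $2\sqrt{p}+2$ to be carried out cleanly.
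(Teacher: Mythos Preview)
This theorem is not proved in the present paper at all: it is quoted verbatim from Banks--Shparlinski \cite{MR2570668} as background, with no accompanying argument. There is therefore no ``paper's own proof'' against which to compare your proposal.

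That said, your sketch is a reasonable outline of the actual Banks--Shparlinski argument. The M\"obius sieve on $d_p(E)$, the swap of summation, the reduction to counting $(a,b)$ modulo $p$ with prescribed $m$-torsion, and the appeal to Vl\u{a}du\c{t}'s local count are all features of their proof. One point worth flagging: you frame the difficult range $p > \min(A,B)$ as requiring ``Weil-type bounds on incomplete exponential or character sums'' detecting the $m$-torsion condition. What Banks and Shparlinski actually use are multiplicative character sum estimates (of Weil strength) for the number of curves over $\F_p$ with prescribed group structure, combined with completion techniques for the incomplete sums over the box; the hypotheses $x^\epsilon \le A,B \le x^{1-\epsilon}$ and $AB \ge x^{1+\epsilon}$ arise precisely to make those completions effective. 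Your description is in the right spirit but somewhat vague on this point; if you were to write a full proof you would need to be explicit about which character sums you are bounding and how.
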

Later, the inequality conditions on $A$ and $B$  were significantly relaxed by A.\ Akbary and A.\ T.\ Felix \cite[Corollary 1.5]{MR3337211}.

Building upon Banks and Shparlinski's methods, the first author refined the results to consider primes in arithmetic progressions  \cite[Theorem 1.3]{Lee}. To summarize his results, under the same assumptions of \thmref{BanksShparlinski}, for $n \leq \log x$ and $k$ coprime to $n$, there exists a positive constant $ C^{\Cyc}_{n,k}$ for which 
$$\frac{1}{|\mathcal{F}|} \sum_{E\in \mathcal{F}}\pi^{\Cyc}_E(x;n,k) \sim C^{\Cyc}_{n,k} \cdot \frac{x}{\log x}, \hs \text{ as } x\to \infty.$$
The average constants $C^{\Cyc}$ and $C^{\Cyc}_{n,k}$ are given explicitly in \eqref{Ccyc} and \eqref{Ccycnk}, respectively.

Related to the cyclicity problem is Koblitz's problem, which seeks to understand the asymptotics of $\pi_E^{\Prime}(x)$ and has significance for elliptic curve cryptography \cite{MR1846457,MR1945401,MR2459986}. In 1988, N.\ Koblitz \cite{MR0917870} made a conjecture analogous to Conjecture \ref{Cyclicity}. In particular, it follows from the conjecture that a non-CM elliptic curve $E/\Q$ has infinitely many primes of Koblitz reduction unless $E$ is rationally isogenous to an elliptic curve with nontrivial rational torsion. Koblitz's conjecture remained open for over 20 years until N.\ Jones gave a counterexample that appears in \cite[Section 1.1]{MR2805578}. The fundamental issue with the conjecture, which the counterexample exploits, is its failure to account for the possibility of entanglements of division fields. Properly accounting for this possibility, D.\ Zywina \cite{MR2805578} refined Koblitz's conjecture as follows.
\begin{Conjecture}[Refined Koblitz's Conjecture, \protect{\cite[Conjecture 1.2]{MR2805578}}]\label{Koblitz}
If $E/\Q$ is an elliptic curve, then
\begin{equation}\label{koblitzasymptotic}
   \pi_E^{\Prime}(x) \sim C_{E}^{\Prime} \cdot \frac{x}{(\log x)^2}, 
\end{equation}
as $x \to \infty$, where $C_{E}^{\Prime}\geq 0$ is the explicit constant defined in \eqref{Koblitzconstantoriginal}.
\end{Conjecture}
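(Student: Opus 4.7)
Since this statement is a conjecture that remains open, I will outline the sieve-theoretic and Galois-theoretic heuristic that both motivates the constant $C_E^{\Prime}$ in \eqref{Koblitzconstantoriginal} and frames the known conditional partial progress. The starting point is that, for a good prime $p$, the group order $|\widetilde{E}_p(\F_p)|$ fails to be prime precisely when it equals $1$ or has a prime divisor $\ell \leq \sqrt{p + 1 + 2\sqrt{p}}$. The key translation is that $\ell \mid |\widetilde{E}_p(\F_p)|$ is equivalent to $\widetilde{E}_p(\F_p)$ having a nontrivial $\ell$-torsion point, hence to Frobenius $\Frob_p$ acting on $E[\ell]$ with $1$ as an eigenvalue, i.e.\ $\det(\Frob_p - I) \equiv 0 \pmod \ell$. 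Thus the Koblitz condition becomes a sieve condition on the Galois conjugacy class of $\Frob_p$ in the division fields $\Q(E[\ell])$.

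Applying the Chebotarev density theorem to $\Q(E[\ell])/\Q$ yields, for each prime $\ell$, a local density
\[
\delta_\ell \coloneqq \frac{\#\{g \in \pi_\ell(G_E) : \det(g - I) \not\equiv 0 \pmod \ell\}}{\#\pi_\ell(G_E)},
\]
where $G_E \subseteq \GL_2(\widehat{\Z})$ is the image of the adelic Galois representation of $E$ and $\pi_\ell$ denotes its mod-$\ell$ reduction. A naive independence heuristic would suggest taking $\prod_\ell \delta_\ell$, but this ignores entanglements among the division fields $\Q(E[\ell_1])$ and $\Q(E[\ell_2])$; such an entanglement is precisely what produced Jones's counterexample to Koblitz's original formulation. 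Zywina's refinement corrects for this by computing the joint density directly from $G_E$, via its image in $\Gal(\Q(E[m])/\Q)$ for $m$ supported on small primes, producing the constant $C_E^{\Prime}$ in \eqref{Koblitzconstantoriginal}. The additional $(\log x)^{-1}$ factor, relative to the cyclicity asymptotic \eqref{cyclicasymptotic}, reflects the shift from sieving for a squarefree-type condition to sieving for primality of an integer of size $\sim p$.

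To turn this heuristic into a theorem, one would combine a Selberg-type upper bound sieve with an effective Chebotarev density theorem applied to the composite division fields $\Q(E[m])$. Under GRH this program yields an upper bound of the expected order of magnitude, $\pi_E^{\Prime}(x) \ll x/(\log x)^2$, although with an implicit constant that in general exceeds $C_E^{\Prime}$. The main obstacle to establishing the full asymptotic, even conditionally on GRH, is the parity problem: classical sieve methods cannot distinguish integers with an even versus odd number of prime factors, so detecting primality of the sequence $|\widetilde{E}_p(\F_p)|$ requires an input beyond sieve theory alone. A full proof would therefore depend on a breakthrough bypassing the parity barrier, or on an elliptic-curve-specific ingredient (for instance a bilinear or automorphic input) that supplies the missing sign information. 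This is the chief reason why, in contrast to Conjecture \ref{Cyclicity}, the Koblitz conjecture remains open even conditionally on GRH.
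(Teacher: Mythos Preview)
Your proposal correctly recognizes that this is an open conjecture with no proof in the paper; the paper states it as a conjecture and does not attempt a proof. Your heuristic explanation via Chebotarev and the sieve-theoretic parity obstruction is consistent with the paper's own treatment in Section~\ref{OnKobCon}, which likewise motivates $C_E^{\Prime}$ through the Galois-representation ratios $\delta_E^{\Prime}(m)$ and the limit \eqref{Koblitzconstantoriginal} rather than offering any proof.
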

Similar to the cyclicity case, the constant $C^{\Prime}_E$ may vanish. In this case, we interpret \eqref{koblitzasymptotic} as indicating that $\pi_E^{\Prime}(x)$ is bounded as $x\to \infty$. Beyond the statement of the conjecture provided above, Zywina made the conjecture more generally for elliptic curves over number fields and allowed for a parameter $t$ to consider primes $p$ for which $|\widetilde{E}_p(\F_p)|/t$ is prime. 

Unlike the cyclicity problem, \conjref{Koblitz} remains open even under GRH. However, in 2011, A.\ Balog, A.\ C.\ Cojocaru, and C.\ David not only obtained a local result for the average version of Koblitz's problem but also applied it to deduce the following global result.
\begin{Theorem}[\protect{\cite[Theorem 1]{MR2843097}}]\label{BCD}
    Set $x > 0$ and $\epsilon > 0$. Let $A \coloneqq A(x)$ and $B \coloneqq B(x)$ be parameters satisfying $x^{\epsilon} < A,B$ and $AB > x\log^{10}x$. There exists a constant $C^{\Prime} > 0$ for which
    $$\frac{1}{|\mathcal{F}|} \sum_{E\in \mathcal{F}} \pi_E^{\Prime}(x) \sim C^{\Prime} \cdot \frac{x}{\log^2 x}, \hs \text{ as } x\to \infty.$$
\end{Theorem}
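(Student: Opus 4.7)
The plan is to invert the order of summation, turning the global average over curves into a local average over primes, and then to combine the classical local statistics of Deuring with a Bombieri--Vinogradov type equidistribution of shifted primes. First I would rewrite
\begin{equation*}
\sum_{E\in\mathcal{F}}\pi_E^{\Prime}(x)=\sum_{p\leq x}\#\{E\in\mathcal{F}:p\nmid N_E\text{ and }|\widetilde{E}_p(\F_p)|\text{ is prime}\}.
\end{equation*}
For each prime $p$, a standard lattice-point argument shows that the number of integer pairs $(a,b)$ with $|a|\leq A$, $|b|\leq B$ reducing modulo $p$ to any given $(\bar a,\bar b)\in\F_p^2$ is $\tfrac{4AB}{p^2}+O(\tfrac{A+B}{p})$. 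This reduces matters to evaluating the local count
\begin{equation*}
N_p^{\Prime}\coloneqq\#\bigl\{(\bar a,\bar b)\in\F_p^2:4\bar a^3+27\bar b^2\neq 0\text{ and the curve }Y^2=X^3+\bar a X+\bar b\text{ has prime order over }\F_p\bigr\},
\end{equation*}
together with controlling the contribution of bad primes and boundary errors, the latter being what forces the hypothesis $AB>x\log^{10}x$.

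I would next organize $N_p^{\Prime}$ according to the trace of Frobenius $t=p+1-|\widetilde{E}_p(\F_p)|$. By Deuring's theorem, for each ordinary $t$ with $|t|<2\sqrt p$ the number of such pairs yielding a curve with trace $t$ equals $(p-1)H(4p-t^2)$, up to automorphism corrections, where $H$ denotes the Hurwitz--Kronecker class number. Hence
\begin{equation*}
N_p^{\Prime}=(p-1)\sum_{\substack{|t|<2\sqrt p\\ p+1-t\text{ prime}}}H(4p-t^2)+O(\sqrt p),
\end{equation*}
where the error absorbs the supersingular contribution. After dividing by $|\mathcal{F}|\sim 4AB$, the problem reduces to establishing
\begin{equation*}
\sum_{p\leq x}\frac{p-1}{p^2}\sum_{\substack{|t|<2\sqrt p\\ p+1-t\text{ prime}}}H(4p-t^2)\sim C^{\Prime}\cdot\frac{x}{\log^2 x}.
\end{equation*}

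The decisive step is to evaluate this double sum by interchanging the $p$- and $t$-sums and, for each fixed $t$, counting primes $p\leq x$ such that $q\coloneqq p+1-t$ is also prime. I would handle this shifted-prime condition via a Bombieri--Vinogradov or Barban--Davenport--Halberstam estimate, exploiting the additional averaging over $t$ and the fact that $H(4p-t^2)$ behaves on average like a multiple of $\sqrt p\,L(1,\chi_{t^2-4p})$. For each fixed $t$ the density of prime pairs contributes a Hardy--Littlewood type singular series $\mathfrak{S}(t)$; collecting local factors at each prime $\ell$ as $t$ varies yields an absolutely convergent Euler product $C^{\Prime}$, and partial summation furnishes the $x/\log^2 x$ shape.

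The principal obstacle, and what distinguishes Koblitz's problem from the cyclicity setting of \thmref{BanksShparlinski}, is precisely this shifted-prime input: the cyclicity count reduces to local divisibility conditions readable from a single Galois representation, whereas the Koblitz count genuinely requires prime detection on $p+1-t$. This is why the hypothesis $AB>x\log^{10}x$ appears, rather than the weaker power-saving condition sufficient for \thmref{BanksShparlinski}; the extra logarithmic margin absorbs the sieve losses inherent in any unconditional shifted-prime estimate. Once this equidistribution step is carried out, bounding the supersingular contribution via Deuring's mass formula and verifying the convergence of $C^{\Prime}$ are routine.
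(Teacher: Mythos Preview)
The paper does not prove \thmref{BCD}; it is quoted verbatim from Balog--Cojocaru--David \cite{MR2843097} as background for the average Koblitz problem, and no argument for it appears anywhere in the text. There is therefore no ``paper's own proof'' to compare your proposal against.

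That said, your sketch is a fair high-level summary of the strategy in \cite{MR2843097}: invert the sums, pass to the local count via Deuring's theorem and Hurwitz--Kronecker class numbers, and then handle the resulting weighted shifted-prime sum by an equidistribution input of Barban--Davenport--Halberstam type. One point worth tightening is the step where you ``interchange the $p$- and $t$-sums and, for each fixed $t$, count primes $p\leq x$ such that $p+1-t$ is also prime'': since $|t|<2\sqrt{p}$, the range of $t$ depends on $p$, and treating each fixed $t$ as producing an independent twin-prime-type problem is not literally how the argument runs. In \cite{MR2843097} the class number $H(4p-t^2)$ is opened via the class number formula and the sum is reorganized by the discriminant and its divisors, after which the average over the extra parameters is what makes the shifted-prime estimate tractable; the singular series $C^{\Prime}$ emerges from that reorganization rather than from assembling Hardy--Littlewood constants $\mathfrak{S}(t)$ over $t$. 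Your identification of the shifted-prime step as the crux, and of the hypothesis $AB>x\log^{10}x$ as the cost of the sieve losses, is correct.
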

The average constant $C^{\Prime}$ is defined in \eqref{Cprime}. The inequality conditions on $A$ and $B$ can also be relaxed as in A.\ Akbary and A.\ T.\ Felix \cite[Equation (1.8)]{MR3337211}. 

A natural inquiry is whether each of these average results is consistent with the corresponding conjectured outcomes on average. This question was answered by N.\ Jones \cite{MR2534114} assuming an affirmative answer to Serre's uniformity question (\questref{SUQ}).
\begin{Theorem}[\protect{\cite[Theorem 6]{MR2534114}}]\label{Jones}
    Assume an affirmative answer to Serre's uniformity question. Let $\mathcal{X} \in \{\Cyc, \Prime\}$. There exists an exponent $\gamma > 0$ such that for any positive integer $t$, we have
    $$\frac{1}{|\mathcal{F}|} \sum_{E \in \mathcal{F}} \left|C_E^{\mathcal{X}}-C^{\mathcal{X}}\right|^t \ll_t \max \left\{\left(\frac{\log B \cdot \log^7 A}{B}\right)^{t/t+1}, \frac{\log^{\gamma}(\min\{A,B\})}{\sqrt{\min\{A,B\}}} \right\}, $$
    as $\min\{A,B\} \to \infty$.
\end{Theorem}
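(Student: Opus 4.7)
The plan is to compare $C_E^{\mathcal{X}}$ with $C^{\mathcal{X}}$ through the Euler product structure that both constants share. Each of $C_E^{\mathcal{X}}$ and $C^{\mathcal{X}}$ can be written as a product of local factors at rational primes $\ell$, with the factor at $\ell$ in $C_E^{\mathcal{X}}$ depending only on the mod-$\ell$ Galois image $G_E(\ell) \subseteq \GL_2(\F_\ell)$ (up to corrections accounting for entanglements of division fields). When $G_E(\ell) = \GL_2(\F_\ell)$, the $\ell$-factor of $C_E^{\mathcal{X}}$ coincides with the $\ell$-factor of $C^{\mathcal{X}}$, since the local averages computed by Gekeler (in the cyclicity case) and Balog--Cojocaru--David (in the Koblitz case) agree with the ``generic'' local factor. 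Granting an affirmative answer to Serre's uniformity question provides an absolute constant $\ell_0$ such that every non-CM $E/\Q$ satisfies $G_E(\ell) = \GL_2(\F_\ell)$ for all primes $\ell > \ell_0$.

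First I would fix a truncation parameter $M \geq \ell_0$ (to be optimized later), set $m \coloneqq \prod_{\ell \leq M}\ell$, and factor
\[
C_E^{\mathcal{X}} = P_m(E) \, Q_m(E), \qquad C^{\mathcal{X}} = P_m \, Q_m,
\]
where $P_m(E)$ absorbs the data at primes $\ell \leq M$ (and hence depends only on $G_E(m) \subseteq \GL_2(\Z/m\Z)$) while $Q_m(E)$ is the tail. Since each local factor is $1 + O(1/\ell^2)$, the tails satisfy $Q_m(E), Q_m = 1 + O(1/M)$. By Serre's uniformity, for any non-CM $E$ and each $\ell > M$ one has $G_E(\ell) = \GL_2(\F_\ell)$; a short analysis of entanglements then yields $Q_m(E) = Q_m$. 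The CM curves in $\mathcal{F}$ form a subfamily of size $O(\sqrt{AB})$ and contribute negligibly to the moment.

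This reduces matters to estimating $P_m(E) - P_m$. For this I would invoke a quantitative large-image count of Duke--Grant--Jones type, giving
\[
\#\{E \in \mathcal{F} : G_E(m) \neq \GL_2(\Z/m\Z)\} \ll |\mathcal{F}| \cdot \frac{m^{O(1)}\log^{O(1)}(\min\{A,B\})}{\sqrt{\min\{A,B\}}}.
\]
Outside this exceptional set one has $P_m(E) = P_m$ exactly, while pointwise $|C_E^{\mathcal{X}} - C^{\mathcal{X}}| \ll 1$. Combining these ingredients and optimizing $M$ to balance the tail term $O(1/M)$ against the exceptional-set contribution produces a bound on the $L^1$ average of $|C_E^{\mathcal{X}} - C^{\mathcal{X}}|$. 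The standard interpolation $\|g\|_{t+1}^{t+1} \leq \|g\|_t^{t}\,\|g\|_\infty$, iterated starting from this $L^1$ estimate, then upgrades the bound to the $L^t$ level and manufactures the exponent $t/(t+1)$ on the first summand of the theorem, while the second summand simply records the frequency of the exceptional set.

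The main obstacle is the book-keeping required to split $C_E^{\mathcal{X}}$ cleanly at level $M$: the definitions \eqref{CcycE} and \eqref{Koblitzconstantoriginal} are not literal products over primes, because of entanglement corrections in the division tower. One must therefore work throughout with the mod-$m$ image $G_E(m)$ and verify that when this image is surjective, all entanglement corrections at levels $\ell \leq M$ trivialize, so the partial products of $C_E^{\mathcal{X}}$ and $C^{\mathcal{X}}$ coincide on the nose. Once this compatibility is in place, Serre's uniformity supplies the tail cancellation, the Duke--Grant--Jones count controls the exceptional family, and Hölder's inequality delivers the claimed moment bound.
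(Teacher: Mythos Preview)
This theorem is cited from \cite{MR2534114} rather than proved in the present paper, but the paper's proof of the refinement Theorem~\ref{Main} in Section~\ref{Moments} follows Jones's method, so I compare your proposal against that.

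There is a genuine gap. You claim that Serre's uniformity, which gives $G_E(\ell) = \GL_2(\F_\ell)$ for all $\ell > \ell_0$, together with ``a short analysis of entanglements'' yields $Q_m(E) = Q_m$. This is false for the generic curve in $\mathcal{F}$. Take a Serre curve $E$ whose discriminant has squarefree part divisible by a prime $p > M$. Then $G_E(\ell) = \GL_2(\F_\ell)$ for \emph{every} prime $\ell$, and $G_E(m) = \GL_2(\Z/m\Z)$ for your $m = \prod_{\ell \leq M}\ell$ (since $m_E \nmid m$; see Proposition~\ref{mEofSerreCurves}). So $E$ lies outside your exceptional set and $P_m(E) = P_m$. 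Yet $C_E^{\mathcal{X}} \neq C^{\mathcal{X}}$: the index-$2$ entanglement linking $\Q(E[2])$ and $\Q(\zeta_{d_E})$ couples the local data at $2$ with that at $p$, and this correction is invisible both in $G_E(m)$ and in the individual $G_E(\ell)$ for $\ell > M$. It lives entirely in your tail, so $Q_m(E) \neq Q_m$. Since almost every curve in $\mathcal{F}$ is a Serre curve, your decomposition gives no control over the typical summand. (Relatedly, your balancing step is incoherent: if $Q_m(E)=Q_m$ held exactly, there would be no tail term $O(1/M)$ to balance against.)

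The actual argument (Jones's, and the paper's in Section~\ref{Moments}) does not try to make the entanglement correction vanish; it makes it \emph{small}. One splits $\mathcal{F}$ into Serre and non-Serre curves. For a Serre curve, the explicit formula (Theorem~\ref{serre-constants} here, or its $n=1$ precursor in \cite{MR2534114}) gives $|C_E^{\mathcal{X}} - C^{\mathcal{X}}| \ll |\Delta'|^{-r}$. One then introduces a cutoff $Z$ on $|\Delta'|$ (not on primes of non-surjectivity), bounds the number of curves with $|\Delta'| < Z$ via Lemma~\ref{Joneslemma}, and optimises $Z$ separately for each $t$; it is this $t$-dependent optimisation, not H\"older interpolation, that manufactures the exponent $t/(t+1)$. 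Non-Serre curves are handled by their scarcity \cite[Theorem~25]{MR2534114} combined with a pointwise bound on $C_E^{\mathcal{X}}$, and it is only in this last pointwise bound that Serre's uniformity enters Jones's proof.
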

\thmref{Jones} has a corollary as follows. Suppose that $A \coloneqq A(x)$ and $B \coloneqq B(x)$ tend to infinity as $x \to \infty$. Assume an affirmative answer to Serre's uniformity question and that $(\log B \log^7 A)/B \to 0$ as $x \to \infty$. Then for $\mathcal{X} \in \{\Cyc, \Prime\}$, we have that
$$\frac{1}{|\mathcal{F}|}\sum_{E\in\mathcal{F}}C_E^{\mathcal{X}}  \to C^{\mathcal{X}}.$$

In this paper, we utilize Zywina's approach to propose Koblitz's constant $C^{\Prime}_{E,n,k}$ for primes in arithmetic progressions. Unlike the cyclicity problem, the average version of Koblitz's constant $C^{\Prime}_{E,n,k}$ has not yet been considered. We address this gap in the literature by providing a candidate for $C^{\Prime}_{n,k}$, the average version of $C^{\Prime}_{E,n,k}$, in Equation \eqref{Cprimenk}. We illustrate the suitability of these conjectural constants by proving an analogous version of \thmref{Jones} for them.

We start by formulating Koblitz's conjecture for primes in arithmetic progressions. 

\begin{Conjecture}\label{KoblitzConjAP}
    If $E/\Q$ is an elliptic curve, then there exists $C^{\Prime}_{E,n,k} \geq 0$ for which
    $$\pi_E^{\Prime}(x;n,k) \sim C^{\Prime}_{E,n,k} \cdot \frac{x}{\log^2 x},$$
    as $x \to \infty$, where $C^{\Prime}_{E,n,k}$ is the explicit constant defined in \eqref{KoblitzconstantAPoriginal}.
\end{Conjecture}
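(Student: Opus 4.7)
The plan is to derive the explicit form of $C^{\Prime}_{E,n,k}$ heuristically by adapting Zywina's probabilistic derivation of $C^{\Prime}_E$ to incorporate the AP condition through the cyclotomic character. Since \conjref{Koblitz} remains open even under GRH, the content of \conjref{KoblitzConjAP} is not an unconditional asymptotic but the identification of the correct leading constant; the ``proof'' therefore consists of a heuristic derivation of $C^{\Prime}_{E,n,k}$ that would, granted a sufficiently strong effective Chebotarev density theorem, yield the claimed asymptotic.

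First, I would translate each condition defining $\pi_E^{\Prime}(x;n,k)$ into a Chebotarev condition on $\Frob_p$. The condition $\ell \mid |\widetilde{E}_p(\F_p)|$ for a prime $\ell \neq p$ is equivalent to $1$ being an eigenvalue of $\rho_{E,\ell}(\Frob_p)$, so $|\widetilde{E}_p(\F_p)|$ being prime amounts to a sieving condition over all squarefree $m \geq 2$. The AP condition $p \equiv k \pmod{n}$ is equivalent to the cyclotomic character sending $\Frob_p$ to $k \in (\Z/n\Z)^\times$. Together these conditions constrain the image of $\Frob_p$ in $\Gal(\Q(E[m], \zeta_n)/\Q)$.

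Second, I would apply inclusion-exclusion to express the expected density heuristically as a M\"obius sum
\[
    \sum_{m \geq 1} \mu(m) \, \delta_m(E,n,k),
\]
where $\delta_m(E,n,k)$ is the Chebotarev density of primes $p$ with $\Frob_p$ in the subset of $\Gal(\Q(E[m], \zeta_n)/\Q)$ whose image in $(\Z/n\Z)^\times$ is $k$ and for which $1$ is an eigenvalue on $E[m]$. Third, I would invoke Serre's open image theorem (and its CM analogue) to show that, for all but finitely many primes $\ell$, the local factor at $\ell$ agrees with the generic factor computed from $\GL_2(\F_\ell) \times (\Z/n\Z)^\times$, from which absolute convergence of the M\"obius sum follows and one can rewrite it as an Euler product with finitely many explicit correction factors. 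The corrections encode non-surjectivity at small primes, entanglements among the $\Q(E[\ell])$, and entanglements between $\Q(E[\ell])$ and $\Q(\zeta_n)$ --- including the Weil pairing constraint $\Q(\zeta_\ell) \subseteq \Q(E[\ell])$. This is precisely the step where Koblitz's original conjecture went wrong and where Zywina's refinement became necessary.

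The main obstacle is not the formal derivation but the bookkeeping required to ensure that the resulting constant behaves correctly. In particular, I would want to verify that summing $C^{\Prime}_{E,n,k}$ over $k$ coprime to $n$ recovers $C^{\Prime}_E$; that $C^{\Prime}_{E,n,k}$ vanishes precisely in the expected scenarios (parallel to the analysis of the cyclicity AP constant by Jones and the first author); and that the explicit formula is compatible with numerical evidence from the authors' \texttt{Magma} code. Turning this heuristic derivation into an unconditional asymptotic is out of reach --- even the non-AP \conjref{Koblitz} remains open under GRH --- so the substantive deliverable of \conjref{KoblitzConjAP} is really the explicit formula \eqref{KoblitzconstantAPoriginal} for the constant itself.
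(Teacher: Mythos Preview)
The statement is a conjecture, so neither the paper nor you are expected to prove it; the content is the heuristic identification of the constant $C^{\Prime}_{E,n,k}$. On that score your Step~1 and Step~3 are in line with the paper: translate both the Koblitz condition and the congruence condition into constraints on $\rho_{E,m}(\Frob_p)$ via $|\widetilde{E}_p(\F_p)| \equiv \det(I-\rho_{E,m}(\Frob_p)) \pmod m$ and $p \equiv \det\rho_{E,n}(\Frob_p) \pmod n$, then use the adelic level $m_E$ to extract an almost-Euler product.

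The genuine gap is Step~2. Your M\"obius sum $\sum_{m\ge 1}\mu(m)\,\delta_m(E,n,k)$, with $\delta_m$ the density of $p\equiv k\pmod n$ for which $m\mid |\widetilde{E}_p(\F_p)|$, does not detect primality of $|\widetilde{E}_p(\F_p)|$; by inclusion--exclusion it detects the condition ``$|\widetilde{E}_p(\F_p)|$ has no prime divisor,'' i.e.\ $|\widetilde{E}_p(\F_p)|=1$, and hence vanishes. This is precisely why the Koblitz problem is not parallel to the cyclicity problem at the level of the constant: cyclicity is a pure inclusion--exclusion over division fields (hence the M\"obius formula \eqref{CcycE}), whereas Koblitz is a sieve of twin-prime type and requires the extra $1/\log x$ coming from the primality of $|\widetilde{E}_p(\F_p)|$ itself.

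The paper therefore does \emph{not} pass through a M\"obius sum. Following Zywina, it sets
\[
\Psi^{\Prime}_{n,k}(m)=\bigl\{M\in\GL_2(\Z/m\Z):\det(I-M)\in(\Z/m\Z)^\times,\ \det M\equiv k\pmod{\gcd(m,n)}\bigr\},
\]
forms $\delta^{\Prime}_{E,n,k}(m)=|G_E(m)\cap\Psi^{\Prime}_{n,k}(m)|/|G_E(m)|$, and defines
\[
C^{\Prime}_{E,n,k}=\lim_{m\to\infty}\frac{\delta^{\Prime}_{E,n,k}(m)}{\prod_{\ell\mid m}(1-1/\ell)}.
\]
The denominator $\prod_{\ell\mid m}(1-1/\ell)$ is exactly the heuristic correction that turns ``$|\widetilde{E}_p(\F_p)|$ has no prime factor $\le m$'' into ``$|\widetilde{E}_p(\F_p)|$ is prime,'' supplying the second $\log x$ in the main term. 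If you replace your Step~2 with this limit definition, the rest of your plan (using $m_E$ and the open-image theorem to factor the limit into an almost-Euler product, and checking that summing over $k$ recovers $C^{\Prime}_E$) matches what the paper carries out in \propref{KoblitzAPeulerproduct} and \propref{AvgSum}.
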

As before, if $C^{\Prime}_{E,n,k} = 0$, we interpret the above as saying that $\pi^{\Prime}_{E}(x;n,k)$ is bounded as $x \to \infty$.
As one piece of evidence to show $ C^{\Cyc}_{n,k}$ and $C^{\Prime}_{n,k}$ are the correct average constants,  we  compare  them  with the constants $ C^{\Cyc}_{E,n,k}$ and  $C^{\Prime}_{E, n,k}$, respectively, for Serre curves which, by Jones \cite{MR2563740}, make up 100\% of elliptic curves in the sense of density, when ordered by naive height.

To state our theorem, we first introduce some notation. Associated to $E$, we define the constant
\begin{equation} \label{E:Def-L}
L = \prod_{\ell \mid m_E} \ell^{\alpha_\ell}, \hs \text{ where } \hs \alpha_\ell = \begin{cases}
        v_\ell(n) & \text{ if } \ell \mid n, \\
        1 & \text{ otherwise},
    \end{cases}
\end{equation}
where $v_\ell(n)$ denotes the $\ell$-adic valuation of $n$ and $m_E$ denotes the adelic level of $E$ (defined in Sections \ref{GalRepAdLev} and \ref{GalRepCM}). The constants $m_E$ and $L$ play a crucial role in computing $C^{\Cyc}_{E,n,k}$ and $C^{\Prime}_{E,n,k}$. For a Serre curve $E$, Proposition \ref{mEofSerreCurves} gives a straightforward formula for $m_E$,
$$
    m_E = \begin{cases}
        2|\Delta'| & \text{ if } \Delta' \equiv 1 \pmod 4, \\
        4|\Delta'| & \text{ otherwise},
    \end{cases}
$$
where $\Delta'$ denotes the squarefree part of the discriminant $\Delta_E$ of any Weierstrass model of $E$. 

\begin{Theorem}\label{serre-constants}
    Let $E/\Q$ be a Serre curve and let $m_E$, $\Delta'$, and $L$ be as above. If $m_E \nmid L,$ then
    $$C^{\Cyc}_{E,n,k} = C^{\Cyc}_{n,k} \hs \text{ and } \hs C^{\Prime}_{E,n,k} = C^{\Prime}_{n,k}.$$
    Otherwise, if $m_E \mid L$, then
    \begin{align*}
        C^{\Cyc}_{E,n,k} &= C^{\Cyc}_{n,k} \left( 1 + \tau^{\Cyc} \frac{1}{5} \prod_{\substack{\ell \mid L \\ \ell \nmid 2n}} \frac{1}{\ell^4-\ell^3-\ell^2+\ell-1} \right), \\
        C^{\Prime}_{E,n,k} &= C^{\Prime}_{n,k} \left( 1 + \tau^{\Prime}\prod_{\substack{\ell \mid L \\\ell \nmid 2n}} \frac{1}{\ell^3-2 \ell^2-\ell+3}\right),
    \end{align*}
    where $\tau^{\Cyc},\tau^{\Prime} \in \{\pm 1\}$ are defined in \defref{Definitionoftau}.
\end{Theorem}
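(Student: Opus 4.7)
The plan is to exploit the explicit description of the adelic Galois image of a Serre curve. Recall that for such an $E$, the image $G_E \subseteq \GL_2(\widehat{\Z})$ has index $2$, with $G_E \bmod m_E$ cut out by a quadratic character
$$\epsilon \colon \GL_2(\Z/m_E \Z) \to \{\pm 1\}$$
that arises as the product of the signature character on $\GL_2(\Z/2\Z) \cong S_3$ and the Kronecker symbol $\left(\tfrac{\Delta'}{\det(\cdot)}\right)$. Consequently, for any positive integer $M$ divisible by $m_E$, the indicator function satisfies $\mathbf{1}_{G_E \bmod M}(g) = \tfrac{1}{2}(1 + \epsilon(g))$. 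Substituting this decomposition into the explicit formula for $C^{\mathcal{X}}_{E,n,k}$ (for $\mathcal{X} \in \{\Cyc,\Prime\}$) as a weighted sum over the Galois image modulo an appropriate level, the first summand reproduces $C^{\mathcal{X}}_{n,k}$ exactly (since the average constant is designed as the analogous sum without the Serre restriction), leaving a correction term $R^{\mathcal{X}}$ obtained by summing against $\epsilon$.

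To handle the case $m_E \nmid L$, I would factor $R^{\mathcal{X}}$ via the Chinese Remainder Theorem as a product of local sums indexed by the primes $\ell \mid \lcm(n, m_E)$, and simultaneously factor $\epsilon$ as a product of local characters $\epsilon_\ell$ supported at the primes $\ell \mid m_E$. At any prime $\ell \mid m_E$ with $\ell \nmid L$ (equivalently, $v_\ell(m_E) > \alpha_\ell$), the character $\epsilon_\ell$ does not descend to $\GL_2(\Z/\ell^{\alpha_\ell}\Z)$: summing $\epsilon_\ell$ along the fibers of the reduction map from $\GL_2(\Z/\ell^{v_\ell(m_E)}\Z)$ to $\GL_2(\Z/\ell^{\alpha_\ell}\Z)$ produces $0$ by orthogonality of characters. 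A single such prime therefore forces the entire local product, and hence $R^{\mathcal{X}}$, to vanish, proving the first assertion.

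When $m_E \mid L$, the character $\epsilon$ already descends to $\GL_2(\Z/L\Z)$, so no such cancellation occurs and $R^{\mathcal{X}}$ becomes a nontrivial product of local ratios
$$\frac{\sum_{g \in \GL_2(\Z/\ell^{\alpha_\ell}\Z)} \epsilon_\ell(g)\, \mathbf{1}_{\mathcal{X}_\ell}(g)\, \mathbf{1}_{\det g \equiv k}}{\sum_{g \in \GL_2(\Z/\ell^{\alpha_\ell}\Z)} \mathbf{1}_{\mathcal{X}_\ell}(g)\, \mathbf{1}_{\det g \equiv k}},$$
where $\mathcal{X}_\ell$ denotes the local cyclicity or Koblitz condition at $\ell$. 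The ratio equals $1$ for $\ell \nmid m_E$, so only primes dividing $m_E$ contribute. For odd $\ell \mid m_E$ with $\ell \nmid n$, a direct character-sum computation over $\GL_2(\F_\ell)$ partitioned by eigenvalue structure yields the explicit local contributions $\tfrac{1}{\ell^4 - \ell^3 - \ell^2 + \ell - 1}$ in the cyclicity case and $\tfrac{1}{\ell^3 - 2\ell^2 - \ell + 3}$ in the Koblitz case, matching the stated products. The same computation at $\ell = 2$ produces $\tfrac{1}{5}$ in the cyclicity setting and $1$ in the Koblitz setting, explaining the asymmetric appearance of the $\tfrac{1}{5}$ factor. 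Finally, the local contributions at $\ell \mid n$ depend on $k$ via the determinant constraint and, after combination with the signature component of $\epsilon$ at $\ell = 2$ and the Kronecker contribution associated to $\Delta'$, assemble into the global sign $\tau^{\mathcal{X}} \in \{\pm 1\}$ of \defref{Definitionoftau}.

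The main obstacle is the computation at $\ell = 2$, where $\epsilon_2$ entangles the $S_3$-signature on the mod-$2$ Galois action with the mod-$4$ behavior of $\Delta'$. The required case analysis depends jointly on $\Delta' \bmod 4$ (which fixes $v_2(m_E) \in \{1, 2\}$), on $v_2(n)$, and on whether the cyclicity (respectively Koblitz) condition permits $2 \mid |\widetilde{E}_p(\F_p)|$. Tracking the resulting signs across these cases and reconciling them with the prescription for $\tau^{\Cyc}, \tau^{\Prime}$ constitutes the principal technical step, with the contributions from odd primes being comparatively routine character sums.
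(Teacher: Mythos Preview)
Your approach is essentially the same as the paper's: the identity $\mathbf{1}_{G_E}(g) = \tfrac{1}{2}(1+\psi_m(g))$ you use is exactly how the paper obtains its key formula
\[
|G_E(L)\cap \Psi^{\mathcal{X}}_{n,k}(L)| = \tfrac{1}{2}\Bigl(|\Psi^{\mathcal{X}}_{n,k}(L)| + \prod_{\ell^\alpha \parallel L}(|Y^{\mathcal{X}}_{\ell^\alpha,+}|-|Y^{\mathcal{X}}_{\ell^\alpha,-}|)\Bigr),
\]
and your local ratios are precisely the quotients $(|Y_{+}|-|Y_{-}|)/|\Psi^{\mathcal{X}}_{n,k}(\ell^\alpha)|$ that the paper computes prime by prime. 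Two small points are worth flagging. First, your treatment of the case $m_E \nmid L$ via orthogonality along fibers is correct in spirit, but the paper takes a shorter route: when $m_E \nmid L$, Proposition \ref{mEofSerreCurves} gives $G_E(L)=\GL_2(\Z/L\Z)$ outright, so one can compute $\delta^{\mathcal{X}}_{E,n,k}(L)$ directly at level $L$ without ever introducing the character. Second, your description of the $\ell=2$ analysis is slightly off: you say $v_2(m_E)\in\{1,2\}$ is fixed by $\Delta' \bmod 4$, but in fact $v_2(m_E)\in\{1,2,3\}$, with the value $3$ occurring when $\Delta'\equiv 2\pmod 4$ (since $\Delta'$ squarefree forces $v_2(\Delta')=1$). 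The paper accordingly splits into four cases for $\Delta' \pmod 8$ rather than two cases modulo $4$, and this finer case distinction is what produces the full definition of $\tau$ in \defref{Definitionoftau}. Once this is corrected, your outline matches the paper's argument.
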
 

\begin{Remark}\label{Brauresults}
    For a Serre curve $E$, the constant $C^{\Cyc}_{E,n,k}$ was previously obtained by J.\ Brau \cite[Proposition 2.5.8]{Brau} under the assumption that $\Delta' \not\in \{-2,-1,2\}$. Our formula for $C^{\Cyc}_{E,n,k}$ does not require this assumption and it aligns with Brau's. 
\end{Remark}

As another piece of evidence, we also consider the moments of the constants $C^{\Cyc}_{E, n, k}$ and $C^{\Prime}_{E, n, k}$ for $E\in  \mathcal{F}$. 
Building upon Jones's methods, we improve \thmref{Jones} unconditionally as follows.
\begin{Theorem}\label{Main}
    Let $n$ be a positive integer and $k$ be coprime to $n$. Then there exists an exponent $\gamma > 0$ such that for any positive integer $t$, we have
    \begin{align*}
        \frac{1}{|\mathcal{F}|} \sum_{E\in \mathcal{F}} \left|C^{\mathcal{\Cyc}}_{E,n,k} - C^{\Cyc}_{n,k} \right|^t &\ll_t \max \left\{ \left( \frac{n \log B \log^7 A}{B}\right)^{\frac{3t}{3t+1}}, \frac{\log^\gamma (\min\{A,B\})}{\sqrt{\min\{A,B\}}} \right\}, \\
        \frac{1}{|\mathcal{F}|} \sum_{E\in \mathcal{F}} \left|C^{\Prime}_{E,n,k} - C^{\Prime}_{n,k}\right|^t &\ll_{n,t} \max \left\{ \left( \frac{n \log B \log^7 A}{B}\right)^{\frac{2t}{2t+1}}, \left(\log \log (\max\{A^3,B^2\})\right)^t \frac{\log^\gamma (\min\{A,B\})}{\sqrt{\min\{A,B\}}} \right\},
    \end{align*}
    as $\min\{A,B\} \to \infty$.
\end{Theorem}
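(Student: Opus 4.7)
The plan is to follow and refine Jones's proof of \thmref{Jones}, with two structural changes: first, to incorporate the arithmetic-progression condition $p\equiv k\pmod n$, and second, to replace the hypothesis of an affirmative answer to Serre's uniformity question with an unconditional effective bound on the largest non-surjective prime of $E\in\mathcal{F}$.

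I would begin by expanding each constant as an absolutely convergent sum indexed by positive integers $d$, whose $d$th summand takes the form $\mu(d)\,\delta^{\mathcal{X}}_E(d,n,k)$ for a density $\delta^{\mathcal{X}}_E(d,n,k)$ determined by the image of the Galois representation of $E$ modulo $\mathrm{lcm}(d,n,m_E)$. The averaged constant $C^{\mathcal{X}}_{n,k}$ is the analogous sum in which $\delta^{\mathcal{X}}_E$ is replaced by the corresponding matrix-group density on $\GL_2$. Introducing a truncation parameter $M=M(A,B,n,t)$, I would then split $C^{\mathcal{X}}_{E,n,k}-C^{\mathcal{X}}_{n,k}$ into a head over $d\le M$ and a tail over $d>M$; a pointwise estimate gives tail summands of order $1/d^2$ in the cyclicity case but only $1/d$ in the Koblitz case, which is ultimately responsible for the different exponents and for the additional $\log\log$ factor in the statement.

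For the tail, I would apply an unconditional effective form of Serre's open image theorem — the unconditional analog of what Jones exploited via the uniformity conjecture — to show that the proportion of $E\in\mathcal{F}$ with a non-surjective prime exceeding some $T_0\asymp \log^{\gamma'}(\min\{A,B\})$ is at most $\log^{\gamma}(\min\{A,B\})/\sqrt{\min\{A,B\}}$. Outside this exceptional set the tail is uniformly small, producing the second term in each maximum, with the factor $(\log\log\max\{A^3,B^2\})^t$ appearing in the Koblitz case because of the slower local-factor decay. For the head, I would use a Cojocaru--Hall-type count of the form $\#\{E\in\mathcal{F}:\text{the mod-}\ell\text{ image of }E\text{ is non-generic}\}\ll n\ell^c|\mathcal{F}|/B$, where the factor of $n$ arises because the level must be enlarged to contain the primes dividing $n$; sum dyadically in $\ell\le M$, extract the $t$th moment via H\"older's inequality, and optimize $M$ to equalize head and tail. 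The distinct exponents $3t/(3t+1)$ and $2t/(2t+1)$ arise from these optimizations against the two tail rates.

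The principal obstacle is the Koblitz case. Its local factors decay only as $1-O(1/\ell)$, so the truncation tail is governed by partial sums of $1/\ell$ rather than $1/\ell^2$; this forces both the smaller exponent $2t/(2t+1)$ and the $(\log\log)^t$ correction in the statement. The most delicate bookkeeping is tracking the polynomial dependence on $n$ cleanly through every step — the initial level-raising forced by the AP condition, the effective counts of non-generic mod-$\ell$ images, and the Koblitz tail estimate — and accounting carefully for possible division-field entanglements via the adelic level $m_E$, so that the averaged Euler factors one produces agree with the explicit formulas for $C^{\mathcal{X}}_{n,k}$ in \eqref{Ccycnk} and \eqref{Cprimenk}.
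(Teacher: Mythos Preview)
Your approach diverges substantially from the paper's, and as written it has gaps that would keep it from going through --- most acutely in the Koblitz case.

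The paper does not expand $C^{\mathcal{X}}_{E,n,k}-C^{\mathcal{X}}_{n,k}$ as a head/tail sum over $d$. Instead it partitions $\mathcal{F}$ into Serre curves, non-CM non-Serre curves, and CM curves. For Serre curves the key input is \thmref{serre-constants}: either $C^{\mathcal{X}}_{E,n,k}=C^{\mathcal{X}}_{n,k}$ exactly (when $m_E\nmid L$), or the difference is a finite product over primes $\ell\mid m_E$, $\ell\nmid 2n$, yielding the pointwise bounds $|C^{\Cyc}_{E,n,k}-C^{\Cyc}_{n,k}|\ll L_E^{-3}$ and $|C^{\Prime}_{E,n,k}-C^{\Prime}_{n,k}|\ll L_E^{-2}$ with $L_E=|\Delta'|/(|\Delta'|,n)$. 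The exponents $3$ and $2$ come from the degrees of the polynomials $\ell^4-\ell^3-\ell^2+\ell-1$ and $\ell^3-2\ell^2-\ell+3$ in those products, not from any sum-tail decay. One then splits Serre curves by $L_E<Z$ versus $L_E\ge Z$, applies a lattice-point count (\lemref{Joneslemma}), and optimizes $Z$ to get the first term in each maximum. For the non-Serre contribution one bounds the constant itself: $C^{\Cyc}_{E,n,k}\le 1/\phi(n)$ trivially, while for Koblitz one uses Masser--W\"ustholz to obtain $C^{\Prime}_{E,n,k}\ll \log\log\max\{|a|^3,|b|^2\}$ for non-CM non-Serre curves (\propref{nonSerre}) and $C^{\Prime}_{E,n,k}\ll_n 1$ for CM curves (\propref{CMupperbound}). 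Combined with Jones's bound on the density of non-Serre curves, this produces the second term; the factor $(\log\log\max\{A^3,B^2\})^t$ thus comes from bounding the constant for a \emph{single} exceptional curve, not from a tail estimate.

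Your proposal runs into trouble on two fronts. First, there is no natural absolutely convergent expansion $C^{\Prime}_{E,n,k}=\sum_d \mu(d)\,\delta^{\Prime}_E(d,n,k)$ of the sort you describe: the Koblitz constant is an almost-Euler product whose normalized local factors are $1-O(1/\ell^2)$ (see \remref{absconv}), so any multiplicative expansion has summands of size $O(1/d^2)$, not $O(1/d)$; your head/tail mechanism does not generate the exponent $2t/(2t+1)$ or the $\log\log$ factor for the reason you give. Second, even in the cyclicity case, controlling the head by prime-by-prime Cojocaru--Hall counts does not handle entanglements: a curve can have $G_E(\ell)=\GL_2(\Z/\ell\Z)$ for every $\ell\le M$ and still have $C^{\mathcal{X}}_{E,n,k}\neq C^{\mathcal{X}}_{n,k}$ because of a nontrivial adelic entanglement encoded in $m_E$. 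The Serre-curve decomposition is exactly what packages local images and entanglements simultaneously, and the explicit formula of \thmref{serre-constants} is what makes the sharp pointwise bound on the difference available.
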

Observe that as $\min \{A,B\} \to \infty$, we have 
$$\frac{\log^\gamma (\min\{A,B\})}{\sqrt{\min\{A,B\}}} \to 0.$$
This gives us the following corollary.
\begin{Corollary}\label{cor-Main}
 Fix $n\in \N$. Let $A \coloneqq A(x)$ and $B \coloneqq B(x)$ both tend to infinity as $x \to \infty$. With the same notation as in \thmref{Main}, we have that
    \[
     \frac{1}{|\mathcal{F}|}\sum_{E \in \mathcal{F}} C^{\mathcal{\mathcal{X}}}_{E,n,k} \to C^{\mathcal{\mathcal{X}}}_{n,k}
    \]
     provided that as $x\to \infty$, 
     $$ \left( \frac{n \log B \log^7 A}{B}\right) \to 0$$ in the cyclicity case
    and 
    $$\left( \frac{n \log B \log^7 A}{B}\right) \to 0, \quad   \frac{\left(\log \log (\max\{A^3,B^2\})\right)^t \log^\gamma (\min\{A,B\})}{\sqrt{\min\{A,B\}}} \to 0$$
    in the Koblitz case.
\end{Corollary}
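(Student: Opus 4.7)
The plan is to deduce Corollary \ref{cor-Main} directly from Theorem \ref{Main} applied with $t = 1$, using the triangle inequality to bridge the two statements. Concretely, for $\mathcal{X} \in \{\Cyc, \Prime\}$, I would begin by noting that
\[
\left| \frac{1}{|\mathcal{F}|}\sum_{E \in \mathcal{F}} C^{\mathcal{X}}_{E,n,k} - C^{\mathcal{X}}_{n,k} \right| = \left| \frac{1}{|\mathcal{F}|}\sum_{E \in \mathcal{F}} \left(C^{\mathcal{X}}_{E,n,k} - C^{\mathcal{X}}_{n,k}\right) \right| \leq \frac{1}{|\mathcal{F}|}\sum_{E \in \mathcal{F}} \left| C^{\mathcal{X}}_{E,n,k} - C^{\mathcal{X}}_{n,k} \right|,
\]
so it suffices to show that the mean absolute deviation on the right tends to zero under the stated hypotheses.

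For the cyclicity case, Theorem \ref{Main} with $t = 1$ bounds this average by
\[
\ll \max\left\{ \left( \frac{n \log B \log^7 A}{B} \right)^{3/4},\ \frac{\log^\gamma(\min\{A,B\})}{\sqrt{\min\{A,B\}}} \right\}.
\]
Since $\min\{A,B\} \to \infty$, the second term in the maximum vanishes because $\sqrt{\min\{A,B\}}$ eventually dominates any fixed polylogarithmic factor. The first term vanishes precisely by the stated hypothesis that $(n \log B \log^7 A)/B \to 0$. This gives the cyclicity half of the corollary.

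For the Koblitz case, Theorem \ref{Main} with $t = 1$ yields the analogous bound
\[
\ll_n \max\left\{ \left( \frac{n \log B \log^7 A}{B} \right)^{2/3},\ \log\log(\max\{A^3,B^2\}) \cdot \frac{\log^\gamma(\min\{A,B\})}{\sqrt{\min\{A,B\}}} \right\}.
\]
The first term is handled exactly as in the cyclicity case. The second term is precisely why the Koblitz hypothesis in the corollary includes the extra $\log\log$-adjusted growth condition; with that condition assumed, the term tends to zero. Since this deduction amounts to a routine unpacking of Theorem \ref{Main} and a check that each term inside the maximum vanishes under the stated growth conditions, no serious obstacle arises at the level of the corollary itself — all of the analytic difficulty has already been absorbed into the moment bounds of Theorem \ref{Main}.
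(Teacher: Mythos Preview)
Your proposal is correct and follows essentially the same approach as the paper: the paper simply observes that $\frac{\log^\gamma(\min\{A,B\})}{\sqrt{\min\{A,B\}}} \to 0$ as $\min\{A,B\}\to\infty$ and declares the corollary an immediate consequence of Theorem~\ref{Main}, while you have made the triangle-inequality step and the $t=1$ specialization explicit.
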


Based on the above considerations, the constant $C^{\Prime}_{n,k}$ that we propose in this paper 
appears to be a plausible candidate for the average counterpart of $C^{\Prime}_{E,n,k}$ in the sense of Balog, Cojocaru, and David \cite{MR2843097}.

As $C^{\Cyc}_{n,k}$ and $C^{\Prime}_{n,k}$ are given explicitly, we may compute their values (to any given precision) using the Magma \cite{magma} scripts available in this paper's GitHub repository \cite{LMW-GitHub}. Below are tables with the values of $C^{\mathcal{X}}_{n,k}$ for small moduli $n$.
\begin{table}[H]
\begin{tabular}{|l|l|l|l|l|l|}
\hline
$n$ \textbackslash{} $k$ & $1$       & $2$ & $3$ & $4$ & $5$ \\ \hline
$2$ & $0.813752$ & $-$ & $-$ & $-$ & $-$ \\ \hline
$3$ & $0.398219$ & $0.415533$ & $-$ & $-$ & $-$ \\ \hline
$4$ & $0.406876$ & $-$ & $0.406876$ & $-$ & $-$ \\ \hline
$5$ & $0.202164$ & $0.203863$ & $0.203863$ & $0.203863$ & $-$ \\ \hline
$6$ & $0.398219$ & $-$ &  $-$  & $-$ & $0.415533$   \\ \hline
\end{tabular}
\caption{The value of $C_{n,k}^{\Cyc}$ to six decimal places.}
\end{table}

\begin{table}[H]
\begin{tabular}{|l|l|l|l|l|l|}
\hline
$n$ \textbackslash{} $k$ & $1$       & $2$ & $3$ & $4$ & $5$ \\ \hline
$2$ & $0.505166$ & $-$ & $-$ & $-$ & $-$ \\ \hline
$3$ & $0.280648$ & $0.224518$ & $-$ & $-$ & $-$ \\ \hline
$4$ & $0.252583$ & $-$ & $0.252583$ & $-$ & $-$ \\ \hline
$5$ & $0.131482$ & $0.124562$ & $0.124562$ & $0.124562$ & $-$ \\ \hline
$6$ & $0.280648$ & $-$ &  $-$  & $-$ & $0.224518$   \\ \hline
\end{tabular}
\caption{The value of $C_{n,k}^{\Prime}$ to six decimal places.}
\end{table}

From the table, we observe that $C^{\mathcal{X}}_{2,1}=C^{\mathcal{X}}$. Moreover,  in each table, the sum of the values across any given row yields $C^{\mathcal{X}}$. 
We prove that, indeed, these sanity checks hold for general moduli in \propref{Avgsumcyc} and \propref{AvgSum}.

Let $p$ be a good prime for $E$. As noted previously,
$$|\widetilde{E}_p(\F_p)| \text{ is prime } \implies \widetilde{E}_p(\F_p) \text{ is cyclic}.$$
Hence, for arbitrary $E/\Q$, one might suspect that if primes in a certain congruence class are more likely to be primes of Koblitz reduction, then they are also more likely to be primes of cyclic reduction. However, the tables above suggest that the contrary holds on average. Indeed, it follows from the formulas \eqref{Ccycnk} and \eqref{Cprimenk} for $C^{\mathcal{X}}_{n,k}$ that these two average constants are oppositely biased for any given modulus $n$. More specifically, for any $k$ coprime to $n$, we have
\begin{equation*}
    C^{\Cyc}_{n,1} \leq C^{\Cyc}_{n,k} \leq C^{\Cyc}_{n,-1} \hs \text{ while } \hs C^{\Prime}_{n,1} \geq C^{\Prime}_{n,k} \geq C^{\Prime}_{n,-1}.
\end{equation*}
Furthermore, the inequalities are strict if and only if $n$ is not a power of $2$. The phenomenon of primes being statistically biased over congruence classes is referred to as the average congruence class bias and was first observed in the cyclicity problem by the first author in \cite{Lee}.

Lastly, it is notable that in both tables, 
$C^{\mathcal{X}}_{5,2} = C^{\mathcal{X}}_{5,3} = C^{\mathcal{X}}_{5,4}$. This is because, for a fixed $n$, the value of $C^{\mathcal{X}}_{n,k}$ depends solely on whether $k$ is congruent to $1$ or not modulo each prime factor of $n$. Therefore, for a fixed modulus $n$ that is supported by $s$ distinct odd primes, there are at most $2^s$ distinct values of $C^{\mathcal{X}}_{n,k}$. Whether there are exactly $2^s$ distinct values is a question proposed by the first author in \cite{Lee}.

\subsection{Outline of the paper} \label{Outline}
Sections \ref{Preliminaries} and \ref{CountMat} provide the essential groundwork for proving the main results. In Section \ref{Preliminaries}, we  introduce the properties of Galois representations of elliptic curves. In particular, we introduce the definition of the adelic level and characterize the Galois images of Serre curves and CM curves. In Section \ref{CountMat}, we count certain subsets of matrix groups that  will be used in calculating the Euler factors of product expansions of $C_{E, n, k}^{\Cyc}$ and $C_{E, n, k}^{\Prime}$.

Sections \ref{DefCon} and \ref{CycKobConSeCur} are dedicated to the computation of the constants $C_{E, n, k}^{\mathcal{X}}$. These computations extend Zywina's approach (a method that originates from Lang and Trotter's work \cite{MR0568299} on the Lang-Trotter conjecture) to obtain $C_{E}^{\Prime}$.    The general idea is to interpret the conditions for primes of Koblitz reduction for $E$ in terms of mod $m$ Galois representations, establish the heuristic constant at each level $m$, and then take the limit as $m\to \infty$. In Section \ref{DefCon}, we apply this idea to reformulate the constants $C_E^{\Cyc}$ and $C_{E, n, k}^{\Cyc}$ and express $C_{E, n, k}^{\Prime}$ in the form of an almost Euler product.  
We also propose the average constant $C_{n, k}^{\Prime}$ as a complete Euler product. In Section \ref{CycKobConSeCur}, we examine the special case where $E$ is a Serre curve, proving Theorem \ref{serre-constants} which gives explicit formulas for $C_{E, n, k}^{\Cyc}$ and $C_{E, n, k}^{\Prime}$ in this case. A critical aspect of these computations involves extracting as many Euler factors as possible from the limits (\ref{KoblitzconstantAPoriginal}) and (\ref{naiveheuden}), leading to the crucial definition of $L$ in (\ref{E:Def-L}).

Sections \ref{KobConNonSeCur} and \ref{Moments} address bounding the moments of $C_{E, n, k}^{\Cyc}$ and $C_{E, n, k}^{\Prime}$ for $E\in \mathcal{F}$. In Section \ref{KobConNonSeCur}, we build on the work carried out in Section \ref{CycKobConSeCur} to bound $C^{\Prime}_{E,n,k}$ for non-Serre, non-CM curves and CM curves. Using a result due to W. Masser and G. W\"ustholz \cite{MR1209248}, we bound $C^{\Prime}_{E,n,k}$ for non-Serre, non-CM curves in terms of the naive height of $E$. This approach allows us to avoid assuming an affirmative answer to Serre's uniformity question, in contrast to Jones. For CM elliptic curves, we first derive the conjectural constant $C_{E, n, k}^{\Prime}$ using a similar method to that of Section \ref{DefCon} and Section \ref{CycKobConSeCur} and bound it directly from its formula. In Section \ref{Moments}, we adapt the method of Jones \cite{MR2534114} to complete the moments computations and prove \thmref{Main}.

Finally, in Section \ref{NumEx}, we provide numerical examples that support our results. The numerical examples are computed using the Magma code available in this paper's GitHub repository \cite{LMW-GitHub}:

\centerline{\url{https://github.com/maylejacobj/CyclicityKoblitzAPs}}

\noindent We now summarize the main functions of the repository. The functions \texttt{AvgCyclicityAP} and \texttt{AvgKoblitzAP} allow one to compute  $C^{\Cyc}_{n,k}$ and $C^{\Prime}_{n,k}$ for given coprime integers $n$ and $k$, and were used to produce the tables above. Next, the functions \texttt{CyclicityAP} and \texttt{KoblitzAP} allow one to compute the constants $C_{E, n, k}^{\Cyc}$ and $C_{E, n, k}^{\Prime}$ for any given non-CM elliptic curve $E$. These functions are based on Proposition \ref{cycalmosteuler} and Proposition \ref{KoblitzAPeulerproduct} and rely crucially on Zywina’s \texttt{FindOpenImage} function \cite{GitHubZywina} to compute the adelic image of $E$. The functions \texttt{SerreCurveCyclicityAP} and \texttt{SerreCurveKoblitzAP} compute $C_{E, n, k}^{\Cyc}$ and $C_{E, n, k}^{\Prime}$ for a given Serre curve $E$ using Theorem \ref{serre-constants} and do not require Zywina's \texttt{FindOpenImage}. Lastly, the repository contains code for the examples in Section \ref{NumEx}.

\subsection{Notation and conventions} \label{Notation} We now give a brief overview of the notation used throughout the paper.  
\begin{itemize}
    \item For functions $f,g \colon \mathbb{R} \to \mathbb{R}$, we write $f \ll g$ or $f = \bigO(g)$ if there exists $C > 0$ and $x_0 \geq 0$ such that $|f(x)| \leq Cg(x)$ for all $x > x_0$. If $C$ depends on a parameter $m$, we write $f \ll_m g$ or $f = \bigO_m(g)$. 
    \item In the same setting as above, we write $f \sim g$ to denote that $\lim_{x\to \infty} f(x)/g(x) = 1$.
    \item Let $A$ and $B$ be positive real numbers. Let $\mathcal{F} \coloneqq \mathcal{F}(A,B)$ denote the family of models $Y^2 = X^3 + aX+b$ of elliptic curves for which $|a| \leq A$ and $|b| \leq B$.
    \item Given a subfamily $\mathcal{G} \subseteq \mathcal{F}$ of elliptic curves, let $f$ and $g$ be functions defined from $\mathcal{G}$ to $\R$. We write $f \ll g$ if there exists an absolute constant $M > 0$ for which $|f(E)| \leq Mg(E)$ for all $E \in \mathcal{G}$. When $M$ depends on a parameter $m$, we write $f \ll_m g$.
    \item $p$ and $\ell$ denote rational primes, $n$ a positive integer, and $k$ an integer coprime to $n$.
    \item We write $p^a \parallel n$ if $p^a \mid n$ and $p^{a+1} \nmid n$. In this case, $a$ is called the $p$-adic valuation of $n$, and is denoted by $v_p(n)$.
    \item Given a positive integer $n$, $n^{\odd}$ denotes the odd part of $n$, i.e., $n^{\odd} = n/2^{v_2(n)}$.
    \item We sometimes write $(m,n)$ as shorthand for $\gcd(m,n)$.
    \item $m^\infty$ denotes an arbitrarily large power of $m$. Thus, $\gcd(n,m^\infty)$ denotes $\prod_{p \mid (n,m)} p^{v_p(n)}$. If every prime factor of $n$ divides $m$, then we write $n \mid m^\infty$.
    \item $\left(\frac{\cdot}{d}\right)$ denotes the Jacobi symbol.
    \item $\phi$ denotes the Euler totient function.
    \item $\mu$ denotes the M\"obius function.
    \item $G(m)$ denotes the image of a subgroup $G$ of $\GL_2(\widehat{\Z})$ under the reduction modulo $m$ map.
    \item Given that $d \mid m$ and $M \in \GL_2(\Z/m\Z)$, $M_d$ denotes the reduction of $M$ modulo $d$.
    \item If $\mathcal{A}$ is the empty set, then we take $\prod_{a\in \mathcal{A}} a$ to be $1$.
    \end{itemize} 

\subsection{Acknowledgments} This paper emerged from some initial conversations at the 2023 LuCaNT (LMFDB, Computation, and Number Theory) conference held at ICERM (Institute for Computational and Experimental Research in Mathematics). We are grateful to the conference organizers and the organizations that provided funding.  An earlier version of this manuscript appears in the first author's doctoral thesis. We are thankful for the doctoral committee members for their helpful comments. The third author, who conducted most of the work at the Max Planck Institute for Mathematics, is grateful for its funding and stimulating atmosphere of research. 

\section{Preliminaries}\label{Preliminaries}
\subsection{Galois representations and the adelic level} \label{GalRepAdLev}

Let $E/\Q$ be an elliptic curve. Associated to $E$, we consider the \emph{adelic Tate module}, which is given by the inverse limit
\[ T(E) \coloneqq \varprojlim E[n] \]
where $E[n]$ denotes the $n$-torsion subgroup of $E(\overline{\Q})$. Let $\widehat{\Z}$ denote the ring of profinite integers. It is well-known that $T(E)$ is a free $\widehat{\Z}$-module of rank $2$. The absolute Galois group $\Gal(\overline{\Q}/\Q)$ acts naturally on $T(E)$, giving rise to the \emph{adelic Galois representation} of $E$,
\[ \rho_E \colon \Gal(\overline{\Q}/\Q) \longrightarrow \Aut(T(E)). \]
Upon fixing a $\widehat{\Z}$-basis for $T(E)$, we  consider $\rho_E$ as a map
\[ \rho_E \colon \Gal(\overline{\Q}/\Q) \longrightarrow \GL_2(\widehat{\Z}). \]
Let $G_E$ denote the image of $\rho_E$, which, because of the above choice of basis, is defined only up to conjugacy in $\GL_2(\widehat{\Z})$. With respect to the profinite topology on $\GL_2(\widehat{\Z})$, the subgroup $G_E$ is necessarily closed since $\rho_E$ is a continuous map.

We now state a foundational result of Serre, known as Serre's open image theorem.

\begin{Theorem}[Serre, \protect{\cite[Th\'eor\`eme 3]{MR0387283}}] \label{SOIT} If $E/\Q$ is without complex multiplication, then $G_E$ is an open subgroup of $\GL_2(\widehat{\Z})$. In particular, the index $[\GL_2(\widehat{\Z}) : G_E]$ is finite.
\end{Theorem}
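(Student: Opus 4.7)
The plan is to decouple the adelic openness into a local statement at each prime plus a surjectivity statement at almost all primes, then glue them. Via the canonical isomorphism $\GL_2(\widehat{\Z}) \simeq \prod_\ell \GL_2(\Z_\ell)$, it suffices to prove (i) for every prime $\ell$, the $\ell$-adic projection of $G_E$ is open in $\GL_2(\Z_\ell)$, and (ii) for all but finitely many primes $\ell$, the mod-$\ell$ image $G_E(\ell)$ equals $\GL_2(\F_\ell)$. A Goursat-style gluing then combines (i) at the finitely many exceptional primes with (ii) elsewhere to force $G_E$ to contain an open subgroup of $\GL_2(\widehat{\Z})$, establishing openness and hence finiteness of index.

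For (i), I would use $\ell$-adic Lie theory. The closed subgroup $G_E(\ell^\infty) \subseteq \GL_2(\Z_\ell)$ is an $\ell$-adic Lie group, and its openness is equivalent to its Lie algebra $\mathfrak{g}_\ell \subseteq \mathfrak{gl}_2(\Q_\ell)$ being the full matrix Lie algebra. The determinant on $G_E(\ell^\infty)$ is the $\ell$-adic cyclotomic character and has open image, so the trace map on $\mathfrak{g}_\ell$ surjects onto $\Q_\ell$. Chebotarev together with the Hasse bound $|a_p| \leq 2\sqrt{p}$ yields non-semisimple Frobenius conjugacy classes, which prevent $\mathfrak{g}_\ell$ from lying in a Borel subalgebra. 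The only remaining option, that $\mathfrak{g}_\ell$ is a Cartan subalgebra, would produce a nontrivial $\Z_\ell$-linear endomorphism of the $\ell$-adic Tate module commuting with the Galois action, and hence (via the Tate conjecture for endomorphisms, proved by Faltings, or Serre's original argument) an endomorphism of $E$ beyond $\Z$, contradicting the non-CM hypothesis.

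For (ii), I would analyze the maximal subgroups of $\GL_2(\F_\ell)$: by Dickson's classification, a proper maximal subgroup is, up to conjugacy, a Borel, a split or nonsplit Cartan normalizer, or has projective image isomorphic to $A_4$, $S_4$, or $A_5$. A Borel image forces a rational cyclic $\ell$-isogeny of $E$, which is restricted to a finite list of $\ell$ by Mazur's isogeny theorem. A split Cartan normalizer produces such an isogeny for a quadratic twist of $E$, again yielding only finitely many exceptional $\ell$. The nonsplit Cartan normalizer and the exceptional projective-image cases are ruled out for large $\ell$ by isogeny-degree estimates (as in Serre's original argument), together with the surjectivity of the determinant onto $\F_\ell^\times$ and simple order bounds on $A_5$ inside $\operatorname{PGL}_2(\F_\ell)$. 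The non-CM hypothesis is invoked throughout to exclude infinite families of Cartan-type images. With (i) and (ii) in hand, the gluing step uses the nonabelian simplicity of $\operatorname{PSL}_2(\F_\ell)$ for $\ell \geq 5$, which forbids any nontrivial homomorphism between $\operatorname{PSL}_2(\F_\ell)$ and $\operatorname{PSL}_2(\F_{\ell'})$ for distinct large primes, to rule out persistent entanglement between distinct primes and conclude the adelic openness.

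The main obstacle is step (ii), particularly the elimination of the nonsplit Cartan normalizer image for large $\ell$, which is the deepest portion of Serre's original argument and in modern treatments can also invoke results of Bilu, Parent, and Rebolledo. Step (i) is technical but reduces to standard $\ell$-adic Lie theory once CM is excluded, and the Goursat-type gluing is comparatively soft.
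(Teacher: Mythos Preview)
The paper does not prove this theorem; it is quoted from Serre's 1972 paper \cite{MR0387283} and used as a black box, so there is no proof here to compare your proposal against.

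As a standalone outline of Serre's argument, your three-step plan (local $\ell$-adic openness via Lie algebras, mod-$\ell$ surjectivity for almost all $\ell$ via Dickson's classification, and Goursat-type gluing) has the correct architecture, but several details are off. In step (i), your exclusion of the Borel case via ``non-semisimple Frobenius conjugacy classes'' is not the actual mechanism: a Borel Lie algebra corresponds to reducibility of $V_\ell(E)$, which is excluded by semisimplicity (or, in Serre's original argument, by isogeny estimates), not by producing unipotent Frobenii. In step (ii), Mazur's isogeny theorem is both anachronistic and unnecessary for a \emph{fixed} $E$; Serre uses Shafarevich-type finiteness of the isogeny class instead. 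Your invocation of Bilu--Parent--Rebolledo is misplaced: their work treats the \emph{split} Cartan normalizer uniformly over all non-CM $E/\Q$ and is progress on Serre's uniformity question (Question~\ref{SUQ} in this paper), not on the open image theorem for a single curve, whose Cartan normalizer cases Serre already disposed of in 1972. Finally, the gluing step needs more than the simplicity of $\mathrm{PSL}_2(\F_\ell)$: one uses that $\SL_2(\Z_\ell)$ equals its own closed commutator subgroup for $\ell \geq 5$ to rule out persistent adelic entanglements among the good primes.
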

Suppose that $E/\Q$ is non-CM. For each positive integer $m$, let $\pi_m$ be the natural reduction map
\[
\pi_m \colon \GL_2(\widehat{\Z}) \longrightarrow \GL_2(\Z/m\Z).
\]
Let $G_E(m)$ be the image of the mod $m$ Galois representation
\[ \rho_{E,m} \colon \Gal(\overline{\Q}/\Q) \to \GL_2(\Z/m\Z),\]
defined by the composition $\pi_m\circ \rho_E$. It follows from Theorem \ref{SOIT} that there exists a positive integer $m$ for which
\begin{equation}\label{preimagenonCM1}
         G_E = \pi_m^{-1}(G_E(m)). 
\end{equation}
One may observe that \eqref{preimagenonCM1} is equivalent to the statement that for every $n \in \N$,
\begin{equation}\label{preimagenonCM2}
      G_E(n) = \pi^{-1}(G_E(\gcd(n,m)))
\end{equation}
where $\pi\colon \GL_2(\Z/n\Z) \to \GL_2(\Z/\gcd(n,m)\Z)$ denotes the natural reduction map. The least positive integer $m$ with this property is called the \emph{adelic level} of $E$, and is denoted by $m_E$. The constant $m_E$ measures both the nonsurjectivity of the $\ell$-adic Galois representations of $E$ as well as the entanglements between their images.

We now give a fundamental property of $m_E$ that we will use several times.
\begin{Lemma}\label{propertyofmE}
    Let $E/\Q$ be a non-CM elliptic curve of adelic level $m_E$. For any $d_1,d_2 \in \N$ with $d_1 \mid m_E^\infty$ and $(d_2, m_E) = 1$, we have
    $$G_E(d_1d_2) \simeq G_E(d_1) \times \GL_2(\Z/d_2\Z)$$
    via the map $\GL_2(\Z/d_1d_2) \to \GL_2(\Z/d_1) \times \GL_2(\Z/d_2)$.
\end{Lemma}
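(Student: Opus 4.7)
My plan is to deduce the claim directly from the defining property of the adelic level given in equation (\ref{preimagenonCM2}) together with the Chinese Remainder Theorem. The key observation is that the coprimality hypotheses on $d_1$ and $d_2$ force the natural reduction map $\GL_2(\Z/d_1d_2\Z) \to \GL_2(\Z/\gcd(d_1d_2,m_E)\Z)$ to factor entirely through the first factor of the CRT decomposition, so that taking preimages will automatically yield a product.

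First I would note that since $\gcd(d_1,d_2)=1$, the Chinese Remainder Theorem provides a ring isomorphism $\Z/d_1d_2\Z \cong \Z/d_1\Z \times \Z/d_2\Z$, and hence a group isomorphism
\[
\Phi\colon \GL_2(\Z/d_1d_2\Z) \xrightarrow{\ \sim\ } \GL_2(\Z/d_1\Z) \times \GL_2(\Z/d_2\Z).
\]
Next, set $e \coloneqq \gcd(d_1 d_2, m_E)$. Since every prime divisor of $d_1$ divides $m_E$ while $\gcd(d_2,m_E) = 1$, we have $e = \gcd(d_1, m_E)$, and in particular $e \mid d_1$ and $\gcd(e, d_2) = 1$. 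Under the isomorphism $\Phi$, the natural reduction $\pi\colon \GL_2(\Z/d_1 d_2\Z) \to \GL_2(\Z/e\Z)$ factors through the projection onto the first factor; that is, if $\pi_1\colon \GL_2(\Z/d_1\Z) \to \GL_2(\Z/e\Z)$ denotes the reduction modulo $e$, then $\pi = \pi_1 \circ \mathrm{pr}_1 \circ \Phi$.

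Now I would invoke equation (\ref{preimagenonCM2}) twice. Applied with $n = d_1 d_2$, it gives $G_E(d_1 d_2) = \pi^{-1}(G_E(e))$, and applied with $n = d_1$, it gives $G_E(d_1) = \pi_1^{-1}(G_E(e))$. (For the mod $d_2$ image, applying (\ref{preimagenonCM2}) with $n = d_2$ yields $G_E(d_2) = \GL_2(\Z/d_2\Z)$ since $\gcd(d_2, m_E) = 1$, though this is not strictly needed for the argument.) Combining these, under the CRT isomorphism $\Phi$,
\[
G_E(d_1 d_2) = \pi^{-1}(G_E(e)) = \Phi^{-1}\bigl(\pi_1^{-1}(G_E(e)) \times \GL_2(\Z/d_2\Z)\bigr) = \Phi^{-1}\bigl(G_E(d_1) \times \GL_2(\Z/d_2\Z)\bigr),
\]
which is exactly the stated isomorphism.

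There is no real obstacle here; the only subtlety is keeping track of which gcd and reduction map is being used at each step, in particular verifying $\gcd(d_1 d_2, m_E) = \gcd(d_1, m_E)$ from the two hypotheses $d_1 \mid m_E^\infty$ and $\gcd(d_2, m_E) = 1$. Once that is in place, the argument reduces to a formal application of (\ref{preimagenonCM2}) through the CRT decomposition.
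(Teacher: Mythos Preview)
Your proof is correct and follows essentially the same approach as the paper: both set $e = \gcd(d_1,m_E)$ (the paper calls it $d'$), use the Chinese Remainder Theorem to factor the reduction map $\GL_2(\Z/d_1d_2\Z)\to\GL_2(\Z/e\Z)$ through the first coordinate, and then apply \eqref{preimagenonCM2} to identify $G_E(d_1d_2)$ with $G_E(d_1)\times\GL_2(\Z/d_2\Z)$. Your write-up is slightly more explicit in verifying $\gcd(d_1d_2,m_E)=\gcd(d_1,m_E)$ and in invoking \eqref{preimagenonCM2} a second time with $n=d_1$, but the argument is the same.
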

\begin{proof}
    By the given condition, we have $(d_1,d_2) = 1$. Set $d' = \gcd(d_1,m_E)$. Let $\pi \colon \GL_2(\Z/d_1d_2\Z) \to \GL_2(\Z/d'\Z)$ and $\pi_1 \colon \GL_2(\Z/d_1\Z) \to \GL_2(\Z/d'\Z)$ be the natural reduction maps. By the Chinese remainder theorem, $\pi$ can be identified with
    $$\pi_1 \times \text{triv} \colon \GL_2(\Z/d_1\Z) \times \GL_2(\Z/d_2\Z) \to \GL_2(\Z/d'\Z) \times \{1\}.$$
    By \eqref{preimagenonCM2}, we have that
    \[G_E(d_1d_2) = \pi^{-1}(G_E(d')) \simeq (\pi_1 \times \text{triv})^{-1}(G_E(d')) = G_E(d_1) \times \GL_2(\Z/d_2\Z). \qedhere \]
\end{proof}

We conclude this subsection by recalling Serre's uniformity question.
\begin{Question}\label{SUQ}
    Does there exist an absolute constant $c$ such that for each elliptic curve $E/\Q$,
    $$G_E(\ell) = \GL_2(\Z/\ell\Z)$$
    holds for all rational primes $\ell > c$?
\end{Question}

While Question \ref{SUQ} remains open, it is widely conjectured to be true with $\ell = 37$ \cite{Su2016,Zy2015a} and considerable partial progress has been made toward its resolution \cite{MR3885140,MR0387283,MR0644559,Ma1978,MR3961086,furio2023serresuniformityquestionproper}.

\subsection{Serre curves}\label{SeCur}
In this subsection, we introduce the generic class of elliptic curves $E/\mathbb{Q}$ with maximal adelic Galois image $G_E$,  and provide an explicit  description of $G_E$ for curves in this class.

Serre noted \cite{MR0387283} that for an elliptic curve $E/\Q$, the adelic Galois representation $\rho_E$ cannot be surjective, that is, the adelic level $m_E$ is never $1$. We briefly give the argument here. If $E$ has complex multiplication, then $[\GL_2(\widehat{\Z}):G_E]$ is necessarily infinite \cite{MR0387283}, so we restrict our attention to the case that $E$ is non-CM. Assume that $E$ is defined by the factored Weierstrass equation
$$Y^2 = (X-e_1)(X-e_2)(X-e_3)$$
with $e_1,e_2,e_3 \in \overline{\Q}$. Then, the $2$-torsion of $E$ is given by
\[E[2] = \left\{\mathcal{O}, (e_1,0), (e_2,0), (e_3,0)\right\} \cong \Z/2\Z \oplus \Z/2\Z.\] Consequently, $\Aut(E[2])$ can be identified with $S_3$. The discriminant $\Delta_E$ of $E$ is given by
\begin{equation}\label{definitionofDeltaE}
    \Delta_E = \left[(e_1-e_2) (e_2-e_3)(e_3-e_1) \right]^2.
\end{equation}
 Let $\Delta'$ denote the squarefree part of $\Delta_E$, i.e., the unique squarefree integer such that $\Delta_E/\Delta' \in (\Q^\times)^2$. The discriminant $\Delta_E$ depends on the Weierstrass model of $E$, but $\Delta'$ does not.
 
Let us first assume that $\Delta_E \not \in (\Q^\times)^2$. Let $d_E$ be the conductor of $\Q(\sqrt{\Delta_E})$, that is, the smallest positive integer such that $\Q(\sqrt{\Delta_E}) \subseteq \Q(\zeta_{d_E})$. One may easily check that
$$ d_E = \begin{cases}
    |\Delta'| & \text{ if } \Delta' \equiv 1 \pmod 4,\\
    4|\Delta'| & \text{ otherwise}.
\end{cases}$$
Let us define the quadratic character associated to $\Q(\sqrt{\Delta_E})$ as follows,
$$\chi_{\Delta_E} \colon \Gal(\overline{\Q}/\Q) \xrightarrow{\text{rest}.} \Gal(\Q(\sqrt{\Delta_E})/\Q) \overset{\sim}{\longrightarrow} \{\pm 1\}.$$
Fix $\sigma \in \Gal(\overline{\Q}/\Q)$. Viewing $\rho_{E,2}(\sigma) \in G_E(2) \subseteq \Aut(E[2]) \simeq S_3$,  by \eqref{definitionofDeltaE}, we notice that

$$\chi_{\Delta_E}(\sigma) \left(\sqrt{\Delta_E}\right) = \epsilon(\rho_{E, 2}(\sigma))\left(\sqrt{\Delta_E}\right),$$
where $\epsilon \colon S_3 \to \{\pm 1\}$ denotes the signature map.\footnote{Note that the value of $\epsilon(\rho_{E, 2}(\sigma))$ is independent of the choice of isomorphism $\Aut(E[2]) \simeq S_3$.} Hence, $\chi_{\Delta_E}(\sigma) = \epsilon(\rho_{E,2}(\sigma))$.

On the other hand, we have that $\Q(\sqrt{\Delta_E}) \subseteq \Q(\zeta_{d_E})$. Since $\Gal(\Q(\zeta_{d_E})/\Q) \simeq (\Z/d_E\Z)^\times$, there exists a unique quadratic character $\alpha \colon \Gal(\Q(\zeta_{d_E})/\Q) \to \{\pm 1\}$ for which $\chi_{\Delta_E}(\sigma) = \alpha(\det \circ \rho_{E,{d_E}}(\sigma))$ for any $\sigma \in \Gal(\overline{\Q}/\Q)$. Therefore, we have
\begin{equation} \label{E:quadch}\epsilon(\rho_{E,2}(\sigma)) = \alpha(\det \circ \rho_{E,d_E}(\sigma)) \end{equation}
for any $\sigma \in \Gal(\overline{\Q}/\Q)$.

Let $M_E \coloneqq \lcm(2,d_E)$. Consider the subgroup
$$H_E(M_E) = \left\{ M \in \GL_2(\Z/M_E\Z) : \epsilon(M_2) = \alpha(\det M_{d_E})\right\},$$
where $M_2$ and $M_{d_E}$ denote the reductions of $M$ modulo $2$ and $d_E$, respectively. Note that the index of $H_E(M_E)$ in $\GL_2(\Z/M_E\Z)$ is $2$ and that $G_E(M_E) \subseteq H_E(M_E)$ by \eqref{E:quadch}. Let $H_E$ be the preimage of $H_E(M_E)$ under the reduction map $\GL_2(\widehat{\Z}) \to \GL_2(\Z/M_E\Z)$. Then $H_E$ is an index $2$ subgroup of $\GL_2(\widehat{\Z})$ that contains $G_E$. We say that $E$ is a \emph{Serre curve} if $H_E = G_E$, that is, $[\GL_2(\widehat{\Z}):G_E] = 2$.

In the above discussion, we supposed that $\Delta_E \not\in (\Q^\times)^2$. We now consider the opposite case that $\Delta_E \in (\Q^\times)^2$. Observe that $[\Q(E[2]):\Q]$ divides $3$, and hence $[\GL_2(\Z/2\Z) : G_E(2)]$ is divisible by $2$. Thus, by \cite[Proposition 2.14]{MayleRakvi},  $[\GL_2(\widehat{\Z}) : G_E] \geq 12$, which follows by considering the index of the commutator of $G_E$ in $\SL_2(\widehat{\Z})$. In particular, $E$ cannot be a Serre curve in this case.

Serre curves are useful for us  for two key reasons. First, as mentioned in the introduction, Jones \cite{MR2563740} showed that they are ``generic'' in the sense that the density of the subfamily of Serre curves among the family of all elliptic curves ordered by naive height is $1$. Second, the adelic image $G_E$ of a Serre curve $E$ can be explicitly described, as we will now discuss.

\begin{Proposition}\label{mEofSerreCurves}
    Let $E/\Q$ be a Serre curve and write $\Delta'$ to denote the squarefree part of the discriminant of $E$. Then 
    \begin{equation}\label{mEforSerreCurves}
        m_E = \begin{cases}
        2|\Delta'| & \text{ if } \Delta' \equiv 1 \pmod 4,\\
        4|\Delta'| & \text{ otherwise}.
    \end{cases}
    \end{equation}
    Furthermore, for any positive integer $m$,
       $$G_E(m) = \begin{cases}
        \GL_2(\Z/m\Z) & \text{ if }  m_E \nmid m, \\
        H_E(m) & \text{ if } m_E \mid m,
    \end{cases}$$
    where $H_E(m)$ denotes the image of $H_E$ under the reduction modulo $m$ map.
\end{Proposition}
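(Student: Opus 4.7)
The plan is to identify the adelic level $m_E$ with the integer $M_E = \lcm(2, d_E)$ from Serre's construction and to derive the description of $G_E(m)$ from the defining quadratic character of $G_E$.

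First, I would verify that $M_E$ matches the claimed piecewise formula. The conductor $d_E$ of $\Q(\sqrt{\Delta'})$ equals $|\Delta'|$ when $\Delta' \equiv 1 \pmod 4$ (in which case $|\Delta'|$ is odd) and equals $4|\Delta'|$ otherwise. Taking $\lcm$ with $2$ in each case yields exactly the formula in the statement.

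Second, I would show $m_E = M_E$ by interpreting $G_E$ as the kernel of a quadratic character of $\GL_2(\widehat{\Z})$ and computing its conductor. Since $E$ is a Serre curve, $G_E = H_E$ is the kernel of the character $\chi \colon \GL_2(\widehat{\Z}) \to \{\pm 1\}$ defined by $\chi(M) = \epsilon(M_2) \alpha(\det M_{d_E})$. Writing $\chi = \chi_1 \chi_2$ with $\chi_1(M) = \epsilon(M_2)$ and $\chi_2(M) = \alpha(\det M_{d_E})$, the index condition $[\GL_2(\widehat{\Z}) : G_E] = 2$ implies that the equality $G_E = \pi_m^{-1}(G_E(m))$ is equivalent to $\ker \pi_m \subseteq \ker \chi$, i.e., to $\chi$ factoring through $\GL_2(\Z/m\Z)$. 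Thus $m_E$ equals the conductor of $\chi$. Restricting $\chi$ to $\SL_2(\widehat{\Z})$ kills $\chi_2$, leaving $\chi_1|_{\SL}$, which pulls back the sign character on $\SL_2(\Z/2\Z) = S_3$. For $m$ odd, the full factor $\SL_2(\Z_2)$ lies in $\ker \pi_m$ and surjects onto $S_3$, so $\chi_1$ is nontrivial on $\ker \pi_m$; this forces $2 \mid m_E$. Once $2 \mid m$, the character $\chi_2 = \chi \chi_1$ must factor through mod $m$ as well, and since $\chi_2$ pulls back via determinant the Dirichlet character $\alpha$ of conductor exactly $d_E$, this forces $d_E \mid m$. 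Conversely, $\chi$ factors through $\GL_2(\Z/M_E\Z)$ by construction. Hence $m_E = \lcm(2, d_E) = M_E$.

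Third, for the description of $G_E(m)$, the case $M_E \mid m$ is immediate: by the convention $H_E(m) = \pi_m(H_E) = \pi_m(G_E) = G_E(m)$. When $M_E \nmid m$, I would show $G_E(m) = \GL_2(\Z/m\Z)$ by producing, for each $\bar M \in \GL_2(\Z/m\Z)$, a lift $M \in G_E$, equivalently a lift with $\chi(M) = 1$. If $2 \nmid m$, the mod-$2$ reduction of the lift is unconstrained, so $\epsilon(M_2)$ can be set to either sign independently of $\alpha(\det M_{d_E})$. If $2 \mid m$ but $d_E \nmid m$, then $\epsilon(M_2) = \epsilon(\bar M_2)$ is fixed, but $\det M \pmod{d_E}$ is only constrained modulo the proper divisor $d := \gcd(m, d_E)$ of $d_E$; hence $\det M_{d_E}$ ranges over a full coset of the kernel $K$ of the reduction $(\Z/d_E\Z)^\times \twoheadrightarrow (\Z/d\Z)^\times$. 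Because $\alpha$ has conductor exactly $d_E$, it is nontrivial on $K$, so $\alpha(\det M_{d_E})$ can be adjusted to match $\epsilon(M_2)$, and the lift lands in $H_E$.

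The principal difficulty is the conductor computation in the second step: showing that $\chi$ has no proper subconductor despite being a product of two characters with possibly overlapping supports. The key leverage is that the abelianization of $\SL_2(\widehat{\Z})$ is captured purely by its mod-$2$ reduction, which cleanly decouples the mod-$2$ obstruction from the cyclotomic obstruction coming from $\Q(\sqrt{\Delta_E})$.
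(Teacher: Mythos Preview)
Your proof is correct and follows essentially the same line as the paper's, which cites Jones \cite{MR2534114} for the formula $m_E = M_E$ and for the surjectivity $G_E(m) = \GL_2(\Z/m\Z)$ when $m_E \nmid m$; your conductor-of-$\chi$ computation supplies precisely those details. For the case $m_E \mid m$, your direct use of $G_E = H_E$ to get $G_E(m) = \pi_m(H_E) = H_E(m)$ is marginally cleaner than the paper's index argument but equivalent.
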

\begin{proof}
The proof of \eqref{mEforSerreCurves} can be found in \cite[pp.\ 696-697]{MR2534114}. Observe that $m_E = M_E$ where $M_E$ is defined as above. Now, let $m$ be a positive integer. By \cite[Equation (13)]{MR2534114} and \eqref{preimagenonCM2}, one may deduce that $G_E(m) = \GL_2(\Z/m\Z)$ if $m_E \nmid m$. Suppose $m_E \mid m$. Then, $G_E(m) \subseteq H_E(m)$. The containment must be equal; otherwise, the index of $G_E$ in $\GL_2(\widehat{\Z})$ is greater than $[\GL_2(\Z/m\Z) : H_E(m)] = [\GL_2(\Z/m_E\Z) : H_E(m_E)] = 2$, contradicting the assumption that $E/\Q$ is a Serre curve.

\end{proof}

In order to compute $C^{\mathcal{X}}_{E,n,k}$, we need to know $G_E$ (meaning we must know the adelic level $m_E$ and the image of $G_E$ modulo $m_E$). For Serre curves, this is particularly tractable, and was exploited in the work of Jones \cite{MR2534114}. We now give the description of $G_E$ for Serre curves.

First, we define $\chi_4 \colon (\Z/4\Z)^\times \to \{\pm 1\}$ and $\chi_8 \colon (\Z/8\Z)^\times \to \{\pm 1\}$ as follows:
\[
\chi_4(k) =
\begin{cases}
1 & \text{ if }k\equiv  1\pmod 4\\
-1 & \text{ if }k\equiv  3\pmod 4
\end{cases}, 
\quad 
\chi_8(k)=\begin{cases}
1 & \text{ if }k\equiv  1, 7\pmod 8\\
-1 & \text{ if }k\equiv  3, 5\pmod 8
\end{cases}.
\]

We define the character $\psi_m \colon \GL_2(\Z/m\Z) \to \{\pm 1\}$ associated to $E$ by
\[
\psi_m=\prod_{\ell^{\alpha}\parallel m}\psi_{\ell^{\alpha}},
\]
where $\psi_{\ell^\alpha} \colon \GL_2(\Z/\ell^\alpha\Z) \to \{\pm 1\}$ is defined for $M \in \GL_2(\Z/\ell^\alpha\Z)$ by
\begin{equation*}
  \psi_{\ell^\alpha}(M) = 
\begin{cases}
 \left( \frac{\det M_\ell}{\ell}\right) & \text{ if } \ell \text{ is odd},\\
    \epsilon(M_2) & \text{ if } \ell= 2, \alpha \geq 1, \text{ and } \Delta' \equiv 1 \pmod 4, \\
    \chi_4(\det M_4) \epsilon(M_2) & \text{ if } \ell = 2 , \alpha \geq 2,\text{ and }  \Delta' \equiv 3 \pmod 4, \\
    \chi_8(\det M_8) \epsilon(M_2) & \text{ if } \ell =2 , \alpha \geq 3,\text{ and }  \Delta' \equiv 2 \pmod 8,\\
    \chi_8(\det M_8) \chi_4(\det M_4) \epsilon(M_2) & \text{ if } \ell = 2 , \alpha \geq 3, \text{ and } \Delta' \equiv 6 \pmod 8.
\end{cases}
\end{equation*}
As noted in \cite[p.\ 701]{MR2534114}, given $m_E\mid m$, one may see that for $M \in \GL_2(\Z/m\Z)$, we have
$$\epsilon(M_2) \left(\frac{\Delta'}{\det M_{m_E}}\right) = \psi_m(M).$$
In particular, we have $H_E(m) = \ker \psi_m$. Thus $G_E$ is preimage of $\ker \psi_m$ in $\GL_2(\widehat{\Z})$.

\subsection{Galois representations in the CM case} \label{GalRepCM} Having discussed Galois representations for non-CM elliptic curves, we now turn to the CM case. Suppose that $E$ has CM by an order $\mathcal{O}$ in an imaginary quadratic field $K$. In this case, the absolute Galois group $\Gal(\overline{K}/K)$ acts naturally on $T(E)$, which is a one-dimensional $\widehat{\mathcal{O}}$-module, where $\widehat{\mathcal{O}}$ denotes the profinite completion of $\mathcal{O}$. Hence, we can construct the adelic Galois representation associated to $E$,
$$\rho_E \colon\Gal(\overline{K}/K) \to \Aut(T(E)) \simeq \GL_1(\widehat{\mathcal{O}}) \simeq \widehat{\mathcal{O}}^\times.$$
Let $G_E$ denote the image of $\rho_E$.
We now state Serre's open image theorem for CM elliptic curves. 
\begin{Theorem}[Serre, \protect{\cite[p.\ 302, Corollaire]{MR0387283}}]\label{CMSerreopenimagetheorem}
    If $E/\Q$ has CM by $\mathcal{O}$, then $G_E$ is an open subgroup of $\widehat{\mathcal{O}}^\times$. In particular, the index $[\widehat{\mathcal{O}}^\times : G_E]$ is finite.
\end{Theorem}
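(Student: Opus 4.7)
The plan is to exploit the abelianness of $\widehat{\mathcal{O}}^\times$ together with the main theorem of complex multiplication to express $\rho_E$ in terms of the idele class group of $K$, then deduce openness via global class field theory. Since $\widehat{\mathcal{O}}^\times$ is abelian, $\rho_E$ factors through the maximal abelian quotient $\Gal(K^{\mathrm{ab}}/K)$, which by global class field theory is canonically identified with a quotient of the idele class group $C_K := \mathbb{A}_K^\times / K^\times$ via the Artin reciprocity map. The first step is to invoke the main theorem of complex multiplication (due to Shimura and Taniyama), which yields an algebraic Hecke character $\psi_E \colon \mathbb{A}_K^\times / K^\times \to \C^\times$ of infinity type $(1,0)$, together with an explicit formula: on the finite ideles $\mathbb{A}_{K,f}^\times$, the composition $\mathbb{A}_{K,f}^\times \to \Gal(K^{\mathrm{ab}}/K) \to \widehat{\mathcal{O}}^\times$ sends $(a_v)_v$ to the product of $\psi_E((a_v)_v)$ with the inverse of the natural image of $(a_v)_v$ in $\widehat{\mathcal{O}}^\times \otimes \Q$. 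In particular, the formula is essentially linear in its finite-adelic input, so the structure of $G_E$ can be read off from that of $\widehat{\mathcal{O}}^\times$ modulo the contribution of the Hecke character and the global units.

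The second step is to analyze this composition and show that its image is open. The key observation is that the natural inclusion $\widehat{\mathcal{O}}^\times \hookrightarrow \mathbb{A}_{K,f}^\times$ followed by projection to $C_K$ has cokernel of finite order: modulo the image of global units, the cokernel is the Picard group of $\mathcal{O}$, which is finite, and the unit group $\mathcal{O}_K^\times$ itself is finite because $K$ is imaginary quadratic. Tracing through the formula from the first step shows that the image of $C_K$ under the above composition is itself of finite index in $\widehat{\mathcal{O}}^\times$, whence $G_E$ is open of finite index. An equivalent way to package this conclusion is to verify openness one prime $\ell$ at a time, showing that the $\ell$-adic representation $\rho_E \otimes \Z_\ell$ has open image in $(\mathcal{O} \otimes \Z_\ell)^\times$ and that the $\ell$-adic images are jointly as independent as possible for all but finitely many $\ell$.

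The main obstacle I anticipate is invoking the main theorem of complex multiplication correctly and keeping track of the distinction between the maximal order $\mathcal{O}_K$ and a possibly non-maximal order $\mathcal{O}$. In the non-maximal case, one reduces to the maximal order via an isogeny from $E$ to a curve with CM by $\mathcal{O}_K$, noting that this changes $\widehat{\mathcal{O}}^\times$ only by passing to a finite-index subgroup; since openness is preserved under such modifications, the reduction is harmless. The remaining bookkeeping involves making the local-global conventions of Artin reciprocity and the main theorem of CM consistent with the chosen identification $\Aut(T(E)) \simeq \widehat{\mathcal{O}}^\times$, which is technical but routine.
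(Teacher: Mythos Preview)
The paper does not actually prove this theorem; it is stated with a citation to Serre \cite[p.\ 302, Corollaire]{MR0387283} and used as a black box. So there is no ``paper's own proof'' to compare against.

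That said, your outline is essentially the standard argument and is the one Serre gives in the cited reference. The key ingredients are exactly as you identify: the representation is abelian, hence factors through $\Gal(K^{\mathrm{ab}}/K)$; the main theorem of complex multiplication expresses $\rho_E$ via a Hecke character on the idele class group; and the finiteness of the class group and of $\mathcal{O}_K^\times$ (since $K$ is imaginary quadratic) then forces the image to be open. Your remark about reducing from a non-maximal order $\mathcal{O}$ to $\mathcal{O}_K$ via an isogeny is also the right way to handle that case. The sketch is correct in spirit; if you were to write it out fully, the main work would be making precise the formula relating $\rho_E$ to $\psi_E$ on the finite ideles (your ``first step''), which is exactly the content of the main theorem of CM as formulated by Shimura--Taniyama.
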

For each positive integer $m$, consider the natural reduction map
$$\pi_m \colon \widehat{\mathcal{O}}^\times \to (\mathcal{O}/m\mathcal{O})^\times.$$
Let $G_E(m)$ denote the image of the modulo $m$ Galois representation 
\[
\rho_{E, m}: \Gal(\overline{K}/K) \to (\mathcal{O}/m\mathcal{O})^\times
\]
defined by the composition $\pi_m\circ \rho_E$. It follows from \thmref{CMSerreopenimagetheorem} that 
\begin{equation}\label{leveldefinition}
    G_E = \pi_m^{-1}(G_E(m))
\end{equation}
for some positive integer $m$. As in the non-CM case, \eqref{leveldefinition} is equivalent to the statement that for every $n \in \mathbb{N}$,
\begin{equation} \label{preimagedefinition2}
    G_E(n) = \pi^{-1}(G_E(\gcd(n,m))),
\end{equation}
where $\pi\colon  (\mathcal{O}/n\mathcal{O})^{\times}\to (\mathcal{O}/\gcd(n, m)\mathcal{O})^{\times}$ is the natural reduction map.

In the CM case, we follow \cite[p.\ 693]{MR2534114}  to define $m_E$ to be the smallest positive integer $m$ such that 
\eqref{preimagedefinition2} holds and for which
\begin{equation}\label{ramcondi}
    4 \left( \prod_{\ell \text{ ramifies in }K} \ell \right) \text{ divides } m.
\end{equation}

One can prove the following using the same argument sketched in the proof of Lemma \ref{propertyofmE}.

\begin{Lemma}\label{CMpropertyofmE}
    Let $E/\Q$ be a CM elliptic curve of level $m_E$. For any $d_1,d_2 \in \N$ with $d_1 \mid m_E^\infty$ and $(d_2, m_E) = 1$, we have
    $$G_E(d_1d_2) \simeq G_E(d_1) \times (\mathcal{O}/d_2\mathcal{O})^\times.$$
\end{Lemma}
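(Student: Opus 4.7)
The plan is to mirror the proof of \lemref{propertyofmE} essentially verbatim, replacing $\GL_2(\Z/m\Z)$ by $(\mathcal{O}/m\mathcal{O})^\times$ throughout and using the Chinese remainder theorem for the order $\mathcal{O}$ in place of the one for $\Z$. Since $d_1 \mid m_E^\infty$ and $(d_2,m_E)=1$, we in particular have $(d_1,d_2)=1$ as rational integers. This makes the ideals $d_1\mathcal{O}$ and $d_2\mathcal{O}$ coprime in $\mathcal{O}$ (their sum contains $1$), so the natural map yields the ring isomorphism
\[
\mathcal{O}/d_1 d_2 \mathcal{O} \;\xrightarrow{\sim}\; \mathcal{O}/d_1\mathcal{O} \;\times\; \mathcal{O}/d_2\mathcal{O},
\]
and hence a group isomorphism on units
\[
(\mathcal{O}/d_1 d_2 \mathcal{O})^\times \;\xrightarrow{\sim}\; (\mathcal{O}/d_1\mathcal{O})^\times \;\times\; (\mathcal{O}/d_2\mathcal{O})^\times.
\]

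Next, I would set $d' \coloneqq \gcd(d_1,m_E)$ and observe that because $(d_2,m_E)=1$, we have $\gcd(d_1 d_2,m_E) = d'$. Let $\pi \colon (\mathcal{O}/d_1 d_2 \mathcal{O})^\times \to (\mathcal{O}/d'\mathcal{O})^\times$ and $\pi_1 \colon (\mathcal{O}/d_1\mathcal{O})^\times \to (\mathcal{O}/d'\mathcal{O})^\times$ denote the natural reductions. Under the CRT isomorphism above, $\pi$ factors through the first component only, so it is identified with
\[
\pi_1 \times \mathrm{triv} \colon (\mathcal{O}/d_1\mathcal{O})^\times \times (\mathcal{O}/d_2\mathcal{O})^\times \;\longrightarrow\; (\mathcal{O}/d'\mathcal{O})^\times \times \{1\}.
\]
Applying \eqref{preimagedefinition2} (the CM analog of \eqref{preimagenonCM2}) to $n=d_1d_2$ gives $G_E(d_1d_2) = \pi^{-1}(G_E(d'))$, and applying it to $n=d_1$ gives $G_E(d_1) = \pi_1^{-1}(G_E(d'))$. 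Combining these under the CRT identification yields
\[
G_E(d_1 d_2) \;=\; (\pi_1 \times \mathrm{triv})^{-1}\!\bigl(G_E(d') \times \{1\}\bigr) \;=\; G_E(d_1) \times (\mathcal{O}/d_2\mathcal{O})^\times,
\]
which is the desired isomorphism.

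The only step that is not an immediate transcription of the non-CM argument is verifying the Chinese remainder theorem in $\mathcal{O}$, but this is standard once one knows $d_1\mathcal{O} + d_2\mathcal{O} = \mathcal{O}$, which follows from coprimality of $d_1,d_2$ in $\Z$. The hypotheses $d_1\mid m_E^\infty$ and $(d_2,m_E)=1$ are used only to force the two equalities $(d_1,d_2)=1$ and $\gcd(d_1d_2,m_E)=\gcd(d_1,m_E)$; no new input beyond \eqref{preimagedefinition2} is required. Thus I expect no substantive obstacle, and the proof should be the same one-paragraph argument as in the non-CM case.
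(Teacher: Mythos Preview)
Your proposal is correct and is exactly the approach the paper takes: the paper simply states that one proves this lemma by the same argument sketched for \lemref{propertyofmE}, with $(\mathcal{O}/m\mathcal{O})^\times$ in place of $\GL_2(\Z/m\Z)$ and \eqref{preimagedefinition2} in place of \eqref{preimagenonCM2}. Your write-up just makes those substitutions explicit.
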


Lemmas \ref{propertyofmE} and \ref{CMpropertyofmE} demonstrate one of the most important properties of $m_E$, which is used to express the constants $C^{\Cyc}_{E,n,k}$ and $C^{\Prime}_{E,n,k}$ as almost Euler products. It is worth noting that both lemmas hold even if $m_E$ is replaced by any positive multiple of it. Thus, the minimality condition in the definition of $m_E$ for both non-CM and CM curves is not required from a theoretical perspective for us. Nonetheless, the minimality of $m_E$ is useful for our computations as it allows us extract more Euler factors.

Let $K/\Q$ be an imaginary quadratic field. We denote its ring of integers by $\mathcal{O}_K$. Let $\mathcal{O}$ be an order of $K$. The index $f = [\mathcal{O}_K : \mathcal{O}]$ is necessarily finite and is called the \emph{conductor} of $\mathcal{O}$. Let $\chi_K$ be the Dirichlet character defined by 
\begin{equation}\label{Kroneckersymbol}
    \chi_K(\ell) =\begin{cases}
    0 & \text{ if } \ell \text{ ramifies in } K, \\
    1 & \text{ if } \ell \text{ splits in } K, \\
    -1 & \text{ if } \ell \text{ is inert in } K.
\end{cases}
\end{equation}
Let $d_K$ be the discriminant of $K$. One can check that
$$\chi_K(\ell) = \left(\frac{d_K}{\ell}\right)$$ for each odd prime $\ell$.
 By \cite[Theorem 9.13]{MR2378655}, we see that $\chi_K$ is a primitive quadratic character. 
 
 We now state a lemma on the size of the mod $\ell^a$ image of $E$. 
\begin{Lemma}\label{Kronecker}
    Let $E/\mathbb{Q}$ be a CM elliptic curve. For $\ell \nmid fm_E$, we have
    $$|G_E(\ell^\alpha)| = \ell^{2(\alpha-1)}(\ell-1)(\ell-\chi_K(\ell)).$$
\end{Lemma}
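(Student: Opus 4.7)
The plan is to reduce the computation of $|G_E(\ell^\alpha)|$ to the computation of $|(\mathcal{O}/\ell^\alpha\mathcal{O})^\times|$, and then evaluate the latter using the splitting behavior of $\ell$ in $K$.

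First I would invoke Lemma \ref{CMpropertyofmE} with $d_1 = 1$ and $d_2 = \ell^\alpha$. This is valid because $\ell \nmid m_E$ implies $\gcd(\ell^\alpha, m_E) = 1$. The lemma immediately yields the isomorphism
$$G_E(\ell^\alpha) \simeq (\mathcal{O}/\ell^\alpha \mathcal{O})^\times,$$
so the problem reduces to counting units in the finite ring $\mathcal{O}/\ell^\alpha\mathcal{O}$.

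Next I would use the assumption $\ell \nmid f$ to simplify this ring. Since $\mathcal{O} = \Z + f\mathcal{O}_K$ has index $f$ in $\mathcal{O}_K$, and $\ell \nmid f$, the inclusion $\mathcal{O} \hookrightarrow \mathcal{O}_K$ induces an isomorphism of $\ell$-adic completions, hence an isomorphism $\mathcal{O}/\ell^\alpha\mathcal{O} \simeq \mathcal{O}_K/\ell^\alpha\mathcal{O}_K$. I would then split into cases based on the splitting of $\ell$ in $K$. Crucially, the ramification condition \eqref{ramcondi} in the definition of $m_E$ guarantees that every prime ramifying in $K$ divides $m_E$, so the hypothesis $\ell \nmid m_E$ rules out the ramified case. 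This leaves only two cases:
\begin{itemize}
    \item If $\ell$ splits in $K$, then $\chi_K(\ell) = 1$ and by the Chinese Remainder Theorem $\mathcal{O}_K/\ell^\alpha\mathcal{O}_K \simeq \Z/\ell^\alpha\Z \times \Z/\ell^\alpha\Z$, giving $|(\mathcal{O}_K/\ell^\alpha\mathcal{O}_K)^\times| = \ell^{2(\alpha-1)}(\ell-1)^2$.
    \item If $\ell$ is inert in $K$, then $\chi_K(\ell) = -1$ and $\mathcal{O}_K/\ell^\alpha\mathcal{O}_K$ is a local ring with residue field $\F_{\ell^2}$, giving $|(\mathcal{O}_K/\ell^\alpha\mathcal{O}_K)^\times| = \ell^{2\alpha} - \ell^{2(\alpha-1)} = \ell^{2(\alpha-1)}(\ell-1)(\ell+1)$.
\end{itemize}
In both cases the count equals $\ell^{2(\alpha-1)}(\ell-1)(\ell - \chi_K(\ell))$, completing the proof.

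There is no significant obstacle here: the argument is essentially bookkeeping once one recognizes that Lemma \ref{CMpropertyofmE} provides the desired isomorphism directly, and that the ramification clause in the definition of $m_E$ eliminates the only case that would have caused trouble. The mild subtlety is ensuring that the condition $\ell \nmid f$ is actually used (to identify the order with the maximal order locally at $\ell$), which is why $f$ appears in the hypothesis alongside $m_E$.
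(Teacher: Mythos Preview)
Your proposal is correct and follows essentially the same approach as the paper: invoke \lemref{CMpropertyofmE} to identify $G_E(\ell^\alpha)$ with $(\mathcal{O}/\ell^\alpha\mathcal{O})^\times$, pass to $\mathcal{O}_K$ using $\ell \nmid f$, and then count units. The only cosmetic difference is that the paper cites an external reference for the unit count in $(\mathcal{O}_K/\ell^\alpha\mathcal{O}_K)^\times$, whereas you compute it directly by the split/inert case analysis.
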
 
\begin{proof}
Since $\mathcal{O}$ is an order of class number $1$, we have
$$\mathcal{O}/ (\ell \mathcal{O}_K \cap \mathcal{O}) = \mathcal{O}/\ell \mathcal{O} \simeq \mathcal{O}_K / \ell \mathcal{O}_K$$
for any $\ell \nmid f$. (See \cite[Proposition 7.20]{MR1028322}.) 
By \lemref{CMpropertyofmE}, we have $G_E(\ell^\alpha) \simeq \left(\mathcal{O}_K/\ell^\alpha \mathcal{O}_K\right)^\times$. Applying \cite[Equation (4)]{MR2869057}, we obtain the desired results.
\end{proof}

Moreover, we have the following uniformity result for CM elliptic curves over $\Q$.
\begin{Proposition}
\label{uniform-CM}
    There is an absolute  constant $C$ such that 
    \[
   fm_E \leq C
    \]
    holds for all CM elliptic curves $E / \Q$.
\end{Proposition}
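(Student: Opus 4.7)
The plan is to reduce to the classical finiteness of CM $j$-invariants over $\mathbb{Q}$ and then establish a uniform bound on the adelic level across the resulting twist families. First, since $j(E)\in\mathbb{Q}$ forces the ring class field of $\mathcal{O}$ to equal $K$, the order $\mathcal{O}$ must have class number one. There are exactly thirteen such imaginary quadratic orders, giving an explicit finite list of admissible pairs $(K,\mathcal{O})$. From this list, $f=[\mathcal{O}_K:\mathcal{O}]\in\{1,2,3\}$ and the primes ramifying in any such $K$ lie in $\{2,3,7,11,19,43,67,163\}$; thus the quantity $4\prod_{\ell\text{ ramified in }K}\ell$ appearing in the definition of $m_E$ is already bounded by an absolute constant.

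Next, for each of the thirteen pairs $(K,\mathcal{O})$ I would fix a base curve $E_0/\mathbb{Q}$ with CM by $\mathcal{O}$. Any other $E/\mathbb{Q}$ with $j(E)=j(\mathcal{O})$ is a twist of $E_0$ (quadratic when $j\notin\{0,1728\}$, quartic when $j=1728$, sextic when $j=0$), so $\rho_E|_K = \rho_{E_0}|_K\cdot (\chi|_K)$ for a finite-order twist character $\chi$ of $\Gal(\overline{\mathbb{Q}}/\mathbb{Q})$. The task is then to show that the index $[\widehat{\mathcal{O}}^\times : G_E]$, and consequently $m_E$, is bounded uniformly across each twist family. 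I would approach this by combining the explicit Hecke character description of $\rho_{E_0}|_K$ with structural control of how $\chi|_K$ modifies $G_{E_0}$ inside $\widehat{\mathcal{O}}^\times$, appealing if necessary to known uniformity results on division fields of CM elliptic curves (in the spirit of work by Bourdon--Clark and Campagna--Pengo). Taking the maximum of the thirteen resulting uniform bounds and combining with the bound on $f$ and on the ramified primes from the first step would then yield an absolute constant $C$ with $fm_E\leq C$, as claimed.

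The main obstacle sits entirely in the second step: the uniform bound on $m_E$ within a given twist family. The subtle point is that, viewed naively, the conductor of the quadratic (or quartic/sextic) character $\chi|_K$ grows with the twisting parameter $d$; the rigidity that ultimately keeps $m_E$ bounded comes from the CM condition together with the requirement that $E$ be defined over $\mathbb{Q}$ (which forces $\chi$ to be pulled back from $\mathbb{Q}$ and imposes compatibility with the involution of $K/\mathbb{Q}$). Carefully pinning down this interplay to extract the desired uniform bound, rather than merely a bound depending on the twist, is where the real work of the proof lies; once this is in hand, the rest of the argument is bookkeeping over the thirteen explicit pairs $(K,\mathcal{O})$.
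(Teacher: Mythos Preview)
Your decomposition matches the paper's almost exactly: first invoke the finiteness of class-number-one imaginary quadratic orders to bound $f$ and the product of ramified primes, then confront the uniform bound on $m_E$ as the only substantive step. Where you sketch a twist-theoretic argument and candidly flag it as the point ``where the real work of the proof lies,'' the paper short-circuits the issue entirely by citing a known uniformity theorem (its reference is \cite[Theorem~1.1]{MR4077686}) which directly gives a uniform bound on $[\widehat{\mathcal{O}}^\times:G_E]$, and hence on $m_E$, for all CM elliptic curves over $\mathbb{Q}$.

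Your diagnosis of the obstacle is accurate: na\"ively, the conductor of $\chi|_K$ grows with the twist parameter, so one genuinely needs the $\mathbb{Q}$-rationality constraint to force rigidity. The mechanism you describe is essentially what underlies the cited uniformity results, and your suggestion to appeal to work ``in the spirit of Bourdon--Clark and Campagna--Pengo'' lands in exactly the right literature. So there is no error in your plan; the only difference is that the paper treats this step as a black-box citation rather than something to be re-proved, and you should feel free to do the same.
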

\begin{proof}
    It suffices to show that the index $[\widehat{\mathcal{O}}^\times : G_E]$, the product of ramified primes in (\ref{ramcondi}), and the conductor of the CM-order $f = [\mathcal{O}_K : \mathcal{O}]$ are uniformly bounded for $E/\Q$. This follows from the fact that there are only finitely many endomorphism rings for CM elliptic curves over $\Q$ and \cite[Theorem 1.1]{MR4077686}. In fact, for CM elliptic curves $E/\Q$, it is known that the conductor of $\mathcal{O}$ is at most $3$. (See \cite[Appendix C, Example 11.3.2]{MR2514094}.)
\end{proof}

\section{Counting matrices}\label{CountMat}
 In this section, we will establish counting results that will play pivotal roles in determining the cyclicity and Koblitz constants for arithmetic progressions. We first outline the general strategy. 
 
 Let $\ell$ be a prime and $\mathcal{P}_\ell$ be a property that certain matrices in $\GL_2(\Z/\ell\Z)$ satisfy. Let $m$ and $n$ be positive integers and $k$ be coprime to $n$. Suppose that we are interested in counting the size of the set
$$X(m) \coloneqq \{ M \in \GL_2(\Z/m\Z) : M_\ell \text{ satisfies } \mathcal{P}_\ell \text{ for each } \ell \mid m, \, \det M \equiv k \neghs \pmod {\gcd(n,m)}\},$$
where $M_\ell$ denotes the reduction of $M$ modulo $\ell$. By the Chinese remainder theorem, it suffices to count the size of $X(\ell^a)$ for each $\ell^a \parallel m$. Furthermore, each element of $X(\ell^a)$ can be realized as a lifting of an element in $X(\ell)$ under the reduction map $\GL_2(\Z/\ell^a\Z) \to \GL_2(\Z/\ell\Z)$. Consequently, the problem of counting the size of $X(m)$ reduces to counting the size of $X(\ell)$ for each $\ell \mid m$. 

The condition that $\ell$ is a prime of cyclic or Koblitz reduction for $E$ can be interpreted as a condition on matrices modulo primes. Thus, with the above strategy in mind, we give a lemma and corollary that will be used  to compute the cyclicity constant $C^{\Cyc}_{E,n,k}$ for non-CM curves.

\begin{Lemma}\label{usefullemma}
Let $\ell$ be a prime, $a$ be a positive integer, and $k$ be an integer coprime to $\ell$. Fix $M \in \GL_2(\Z/\ell\Z)$ with $\det M \equiv k \pmod \ell$. For any integer $\widetilde{k}$ with $\widetilde{k} \equiv k \pmod \ell$, we have
    $$\#\left\{\widetilde{M} \in \GL_2(\Z/\ell^a\Z) : \widetilde{M} \equiv M \neghs \pmod \ell, \det \widetilde{M} \equiv \widetilde{k} \neghs \pmod {\ell^a} \right\} = \ell^{3(a-1)}.$$
\end{Lemma}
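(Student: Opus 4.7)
The plan is to parametrize the set of lifts of $M$ and then analyze how the determinant varies over that set, showing it is uniformly distributed over the appropriate fiber in $(\Z/\ell^a\Z)^\times$.

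First, I would fix an arbitrary lift $\widetilde{M}_0 \in \GL_2(\Z/\ell^a\Z)$ of $M$; such a lift exists because any preimage in $M_2(\Z/\ell^a\Z)$ of an invertible matrix modulo $\ell$ is itself invertible (its determinant is a unit modulo $\ell$, hence a unit modulo $\ell^a$). The kernel of the reduction map $\GL_2(\Z/\ell^a\Z) \to \GL_2(\Z/\ell\Z)$ is $K = \{I + \ell N : N \in M_2(\Z/\ell^{a-1}\Z)\}$, which has size $\ell^{4(a-1)}$. Thus the set of lifts of $M$ is precisely $\widetilde{M}_0 \cdot K$, consisting of $\ell^{4(a-1)}$ elements, and a lift $\widetilde{M} = \widetilde{M}_0(I+\ell N)$ has determinant $\det \widetilde{M}_0 \cdot \det(I+\ell N)$.

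Next, I would observe that $\det$ restricts to a group homomorphism $K \to U_1$, where $U_1 \coloneqq \ker\bigl((\Z/\ell^a\Z)^\times \to (\Z/\ell\Z)^\times\bigr)$ has order $\ell^{a-1}$. To prove this homomorphism is surjective, it suffices to exhibit, for each $u \in U_1$, a matrix realizing $u$. Writing $u = 1 + \ell c$ for some $c \in \Z/\ell^{a-1}\Z$ and taking
\[
N = \begin{pmatrix} c & 0 \\ 0 & 0 \end{pmatrix},
\]
one computes directly that $\det(I + \ell N) = 1 + \ell c = u$. Hence $\det\colon K \to U_1$ is surjective, and all its fibers have the same size $|K|/|U_1| = \ell^{4(a-1)}/\ell^{a-1} = \ell^{3(a-1)}$.

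Finally, I would conclude. The condition $\det \widetilde{M} \equiv \widetilde{k} \pmod{\ell^a}$ becomes $\det(I+\ell N) \equiv \widetilde{k} (\det \widetilde{M}_0)^{-1} \pmod{\ell^a}$, and the right-hand side lies in $U_1$ since $\widetilde{k} \equiv k \equiv \det\widetilde{M}_0 \pmod{\ell}$. Therefore the count equals the size of a single fiber of $\det\colon K \to U_1$, namely $\ell^{3(a-1)}$, which is exactly the claim.

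There is essentially no main obstacle here; the only mildly subtle point is verifying that $\det$ on the congruence kernel $K$ lands in (and surjects onto) $U_1$, which is handled by the explicit diagonal matrix above. Everything else is bookkeeping with the reduction map $\GL_2(\Z/\ell^a\Z) \to \GL_2(\Z/\ell\Z)$ and the observation that the determinants of distinct lifts of a fixed $M$ differ by elements of $U_1$.
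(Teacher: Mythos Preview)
Your proof is correct and follows essentially the same route as the paper: both compute the fiber of $M$ under reduction (size $\ell^{4(a-1)}$), identify the image of $\det$ on this fiber as the coset $\{k' : k' \equiv k \pmod{\ell}\}$ of size $\ell^{a-1}$, and divide. Your version is slightly more explicit in that you translate the problem to the kernel $K$ via a fixed lift and exhibit a concrete diagonal matrix witnessing surjectivity of $\det\colon K \to U_1$, whereas the paper simply asserts $\det(\pi^{-1}(M)) = \{k' \in (\Z/\ell^a\Z)^\times : k' \equiv k \pmod{\ell}\}$ and that all fibers have equal size; but the underlying idea is identical.
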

\begin{proof} Let $\pi \colon \GL_2(\Z/\ell^a\Z) \to \GL_2(\Z/\ell\Z)$ denote the reduction modulo $\ell$ map, which is a surjective group homomorphism. We have that
\[
\pi^{-1}(M) = \left\{\widetilde{M} \in \GL_2(\Z/\ell^a\Z) : \widetilde{M} \equiv M \neghs \pmod{\ell} \right\}.
\]
The image of $\pi^{-1}(M)$ under $\det \colon \GL_2(\Z/\ell^a\Z) \to (\Z/\ell^a\Z)^\times$ is
\[
\det(\pi^{-1}(M)) = \{k' \in (\Z/\ell^a \Z)^\times : k' \equiv k \neghs \pmod{\ell} \}.
\]
Hence, for any integer $\widetilde{k}$ with $\widetilde{k} \equiv k \pmod \ell$, we have
\begin{align*}
\#\left\{\widetilde{M} \in \GL_2(\Z/\ell^a\Z) : \widetilde{M} \equiv M \neghs \pmod \ell, \det \widetilde{M} \equiv \widetilde{k} \neghs \pmod {\ell^a} \right\} &= \frac{| \pi^{-1}(M) |}{| \det(\pi^{-1}(M)) |}.
\end{align*} 
Finally, we note that $| \pi^{-1}(M) | = | \ker(\pi) | = \ell^{4(a-1)}$ and $| \det(\pi^{-1}(M)) | = \ell^{a-1}$.
\end{proof}\begin{Corollary}\label{countingcyc}
    Fix a prime $\ell$ and positive integer $a$. Let $k$ be an integer coprime to $\ell$. Then 
    \begin{align*}
    \#\{M \in \GL_2(\Z/\ell^a\Z) : M &\not \equiv I \neghs \pmod \ell, \det M \equiv k \neghs \pmod {\ell^a} \} \\  &= \begin{cases}
        \ell^{3(a-1)} \cdot (\ell^3-\ell-1) & \text{ if } k \equiv 1 \neghs \pmod \ell, \\
        \ell^{3(a-1)} \cdot (\ell^3-\ell) & \text{ if } k \not \equiv 1 \neghs \pmod \ell.
    \end{cases}
    \end{align*}
\end{Corollary}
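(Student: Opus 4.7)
The plan is to reduce the problem to counting modulo $\ell$ using \lemref{usefullemma}, and then perform a small case analysis depending on whether $k \equiv 1 \pmod \ell$.

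First, I would partition the set
\[
S \coloneqq \{M \in \GL_2(\Z/\ell^a\Z) : M \not\equiv I \neghs \pmod \ell,\ \det M \equiv k \neghs \pmod{\ell^a}\}
\]
according to the reduction modulo $\ell$. That is, I let $\pi\colon \GL_2(\Z/\ell^a\Z) \to \GL_2(\Z/\ell\Z)$ denote the natural reduction map and write
\[
S = \bigsqcup_{\overline{M} \in T} \pi^{-1}(\overline{M}) \cap \{\det \equiv k \pmod{\ell^a}\},
\]
where $T \coloneqq \{\overline{M} \in \GL_2(\Z/\ell\Z) : \overline{M} \neq I,\ \det \overline{M} \equiv k \pmod{\ell}\}$. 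By \lemref{usefullemma} applied with $\widetilde{k} = k$, each summand has cardinality exactly $\ell^{3(a-1)}$, so $|S| = \ell^{3(a-1)} \cdot |T|$.

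Second, I would count $|T|$ directly. The set of matrices in $\GL_2(\Z/\ell\Z)$ of determinant $k$ is a coset of $\SL_2(\Z/\ell\Z)$, hence has size $|\SL_2(\Z/\ell\Z)| = \ell(\ell^2 - 1) = \ell^3 - \ell$. To form $T$, one removes the identity from this set precisely when $I$ has determinant $k$, i.e., when $k \equiv 1 \pmod \ell$. This gives
\[
|T| = \begin{cases} \ell^3 - \ell - 1 & \text{if } k \equiv 1 \pmod \ell, \\ \ell^3 - \ell & \text{if } k \not\equiv 1 \pmod \ell. \end{cases}
\]

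Multiplying by $\ell^{3(a-1)}$ yields the stated formula. There is no real obstacle here: the argument is a routine application of \lemref{usefullemma} combined with the well-known order of $\SL_2(\Z/\ell\Z)$; the only point requiring attention is the bookkeeping of whether the identity contributes to the count, which is exactly the source of the dichotomy in the statement.
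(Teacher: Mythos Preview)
Your proof is correct and follows essentially the same approach as the paper: reduce to counting modulo $\ell$ via \lemref{usefullemma}, then count matrices of fixed determinant as a coset of $\SL_2(\Z/\ell\Z)$ and remove the identity when $k \equiv 1 \pmod{\ell}$. The paper phrases the reduction step as ``the condition $M \not\equiv I \pmod{\ell}$ is preserved under lifting'' rather than as an explicit partition, but the content is identical.
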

\begin{proof}
    Let $M \in \GL_2(\Z/\ell\Z)$. If $M \not\equiv I \pmod{\ell}$, then any  lifting $\widetilde{M}$ of $M$ in $\GL_2(\Z/\ell^a\Z)$ satisfies $\widetilde{M} \not \equiv I \pmod \ell$. Thus the condition $M \not \equiv I \pmod \ell$ is preserved under lifting.

    If $k \not \equiv 1 \pmod \ell$, then $\det M \equiv k \pmod \ell$ guarantees that $M \not\equiv I \pmod{\ell}$. Since the determinant map $\det \colon \GL_2(\Z/\ell\Z) \to (\Z/\ell\Z)^\times$ is a surjective group homomorphism, one can check that there are $\ell^3-\ell$ matrices $M$ in $\GL_2(\Z/\ell\Z)$ with $\det M \equiv k \pmod \ell$. On the other hand, if $k \equiv 1 \pmod \ell$, we have one less choice for $M$. Along with \lemref{usefullemma}, we obtain the desired results.
\end{proof}

The next lemma gives a corollary that will be useful when computing the Koblitz constant $C^{\Prime}_{E,n,k}$ for non-CM curves.

\begin{Lemma}\label{countinglemma}
Let $\ell$ be an odd prime, $d \in (\Z/\ell\Z)^\times$, and $t \in \Z/\ell\Z$. Then we have
    $$\#\left\{M \in \GL_2(\Z/\ell\Z) : \det M \equiv d \neghs \pmod \ell, \tr M \equiv t \neghs \pmod \ell \right\} = \ell^2 + \ell \cdot \left(\frac{t^2-4d}{\ell}\right),$$
    where $\left(\frac{\cdot}{\ell}\right)$ denotes the Legendre symbol. 
If $\ell = 2$, then we have
$$\#\left\{M \in \GL_2(\Z/2\Z) : \det M \equiv 1 \neghs \pmod 2, \tr M \equiv t \neghs \pmod 2 \right\} =\begin{cases}
 4  &   \text{ if }t \equiv 0 \pmod 2,\\
 2 &   \text{ if } t \equiv 1 \pmod 2.
 \end{cases}$$
\end{Lemma}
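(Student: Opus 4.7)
The plan is to parametrize matrices in $\GL_2(\Z/\ell\Z)$ by their entries and reduce the problem to counting solutions of a single quadratic-type equation. Write a general matrix as $M = \begin{pmatrix} a & b \\ c & e \end{pmatrix}$ with entries in $\Z/\ell\Z$. The conditions $\tr M \equiv t$ and $\det M \equiv d$ become $a + e \equiv t$ and $ae - bc \equiv d$. Using the first equation to eliminate $e = t - a$, the second equation becomes $bc \equiv -(a^2 - ta + d) \pmod{\ell}$. So counting the desired matrices amounts to counting triples $(a,b,c) \in (\Z/\ell\Z)^3$ satisfying this product equation; note that invertibility is automatic once $d \in (\Z/\ell\Z)^\times$.

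For odd $\ell$, I would count separately for each $a$. If $f(a) \coloneqq -(a^2 - ta + d)$ is nonzero in $\Z/\ell\Z$, then for each of the $\ell - 1$ nonzero values of $b$ there is a unique $c = f(a)/b$, giving $\ell - 1$ pairs $(b,c)$. If $f(a) = 0$, then one needs $bc = 0$, giving the $2\ell - 1$ pairs with either $b = 0$ or $c = 0$. Letting $N$ denote the number of $a \in \Z/\ell\Z$ with $a^2 - ta + d \equiv 0$, the total count is
\[
N(2\ell - 1) + (\ell - N)(\ell - 1) = \ell^2 + \ell(N - 1).
\]
By the standard theory of quadratics over $\F_\ell$, $N$ equals $2$, $0$, or $1$ according to whether the discriminant $t^2 - 4d$ is a nonzero square, a nonsquare, or zero. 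In all three cases $N - 1 = \left( \tfrac{t^2 - 4d}{\ell}\right)$ with the usual convention that the Legendre symbol of $0$ is $0$, yielding the claimed formula.

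For $\ell = 2$, since the Legendre symbol is not defined there, I would simply verify the two sub-cases by inspection. With $d \equiv 1 \pmod 2$ forced, the equation $bc \equiv -(a^2 - ta + 1) \pmod 2$ can be treated by enumerating the two choices of $a$; using $a^2 \equiv a$, one finds that $t \equiv 0$ yields one value of $a$ producing $bc \equiv 1$ (only the pair $b = c = 1$) and one producing $bc \equiv 0$ (the three pairs with some entry zero), for $4$ matrices total, while $t \equiv 1$ yields $bc \equiv 1$ for both values of $a$, for $2$ matrices total.

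The argument is essentially routine counting, so there is no serious obstacle; the only thing requiring a bit of care is confirming that the formula $\ell^2 + \ell \left( \tfrac{t^2 - 4d}{\ell}\right)$ remains correct in the degenerate case $t^2 \equiv 4d$, which is handled by the convention on the Legendre symbol.
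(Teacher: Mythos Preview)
Your proof is correct and entirely self-contained. The paper does not actually prove the odd-$\ell$ case; it simply refers to \cite[Lemma 2.7]{MR2178556} and remarks that $\ell = 2$ is a direct calculation. Your elementary parametrization by entries, reducing to the count of solutions of $bc = -(a^2 - ta + d)$ and then to the number of roots of a quadratic over $\F_\ell$, gives a clean independent argument that avoids the external reference and makes the appearance of the Legendre symbol transparent.
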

\begin{proof} The case when $\ell = 2$ is a direct calculation. See \cite[Lemma 2.7]{MR2178556} for the case when $\ell$ is odd.
\end{proof}

\begin{Corollary}\label{countingprime}
    Fix a prime $\ell$ and positive integer $a$. Let $k$ be an integer coprime to $\ell$. Then
    \begin{align*} \#\{M \in \GL_2(\Z/\ell^a\Z) : &\det (M-I) \not \equiv 0 \neghs \pmod \ell, \det M \equiv k \neghs \pmod {\ell^a} \} \\
    &= \begin{cases} \ell^{3(a-1)} \cdot (\ell^3-\ell^2-\ell) & \text{ if } k \equiv 1 \neghs \pmod \ell,\\
        \ell^{3(a-1)} \cdot (\ell^3-\ell^2-2\ell) & \text{ if } k \not \equiv 1 \neghs \pmod \ell. \end{cases}
    \end{align*}
\end{Corollary}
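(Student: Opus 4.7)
The plan is to reduce the count modulo $\ell$ using \lemref{usefullemma} and then invoke \lemref{countinglemma}. First, I would observe that for any $2 \times 2$ matrix $M$ one has the identity
\[
\det(M - I) = \det M - \tr M + 1,
\]
so the condition $\det(M - I) \not\equiv 0 \pmod{\ell}$ depends only on $M$ modulo $\ell$ and, given $\det M \equiv k \pmod{\ell}$, is equivalent to $\tr M \not\equiv k + 1 \pmod{\ell}$.

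Next, since this condition is a mod-$\ell$ condition, I would apply \lemref{usefullemma} to conclude that each matrix $M_0 \in \GL_2(\Z/\ell\Z)$ satisfying $\det M_0 \equiv k \pmod{\ell}$ and $\tr M_0 \not\equiv k+1 \pmod{\ell}$ admits exactly $\ell^{3(a-1)}$ lifts to $\GL_2(\Z/\ell^a\Z)$ of determinant congruent to $k$ modulo $\ell^a$. Hence the count in question equals $\ell^{3(a-1)} N$, where
\[
N := \#\left\{M_0 \in \GL_2(\Z/\ell\Z) : \det M_0 \equiv k \pmod{\ell}, \ \tr M_0 \not\equiv k+1 \pmod{\ell}\right\}.
\]

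To compute $N$ for odd $\ell$, I would subtract the ``bad trace'' count from the total. The total number of matrices in $\GL_2(\Z/\ell\Z)$ with determinant $k$ is $|\SL_2(\Z/\ell\Z)| = \ell^3 - \ell$, while \lemref{countinglemma} gives
\[
\#\left\{M_0 : \det M_0 \equiv k,\ \tr M_0 \equiv k+1\right\} = \ell^2 + \ell \left(\frac{(k+1)^2 - 4k}{\ell}\right) = \ell^2 + \ell \left(\frac{(k-1)^2}{\ell}\right).
\]
The key observation is that $\left(\frac{(k-1)^2}{\ell}\right)$ equals $0$ if $k \equiv 1 \pmod{\ell}$ and $1$ otherwise, since in the latter case $(k-1)^2$ is a nonzero square. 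Subtraction then yields $N = \ell^3 - \ell^2 - \ell$ in the first case and $N = \ell^3 - \ell^2 - 2\ell$ in the second, matching the claimed formulas after multiplication by $\ell^{3(a-1)}$.

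Finally, the case $\ell = 2$ needs separate treatment; however, $k$ being coprime to $2$ forces $k \equiv 1 \pmod{2}$, so only the first case arises. Using the $\ell = 2$ clause of \lemref{countinglemma}, the matrices with $\det \equiv 1$ and $\tr \not\equiv 0 \equiv k+1 \pmod{2}$ number exactly $2$, which agrees with $\ell^3 - \ell^2 - \ell = 2$ at $\ell = 2$. There is no substantive obstacle beyond correctly parsing the Legendre symbol of $(k-1)^2$, which is precisely what cleanly distinguishes the two cases in the statement.
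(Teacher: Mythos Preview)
Your proof is correct and follows essentially the same approach as the paper: both reduce to a mod-$\ell$ count via the identity $\det(M-I)=\det M-\tr M+1$, apply \lemref{usefullemma} for the lifting factor $\ell^{3(a-1)}$, use \lemref{countinglemma} together with the observation that $\left(\tfrac{(k-1)^2}{\ell}\right)$ is $0$ or $1$ according as $k\equiv 1\pmod\ell$ or not, and handle $\ell=2$ separately.
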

\begin{proof} The condition that $\det(M-I) \equiv 0 \pmod \ell$ is preserved under lifting, since if $\widetilde{N} \in \GL_2(\Z/\ell^a\Z)$ is any lift of a matrix $N \in \GL_2(\Z/\ell\Z)$, then $\widetilde{N} - I \equiv N - I \pmod{\ell}$, so $\det(\widetilde{N} - I) \equiv \det(N - I) \pmod{\ell}$.

    Now, let $M \in \GL_2(\Z/\ell^a\Z)$ be such that $\det M \equiv k \pmod {\ell^a}$ and note that 
    $$\det (M-I) \equiv 0 \pmod{\ell} \iff \tr M \equiv k +1 \pmod \ell.$$
    Thus, if $\ell\neq 2$, we have that
    $$\left(\frac{(k+1)^2-4k}{\ell}\right) = \left(\frac{(k-1)^2}{\ell}\right) = \begin{cases}
    0 & \text{ if } k \equiv 1 \neghs \pmod \ell, \\
    1 & \text{ if } k \not \equiv 1 \neghs \pmod \ell.
\end{cases}$$
    By \lemref{countinglemma}, this completes the proof when $\ell \neq 2$. When $\ell=2$ and $a = 1$, it is straightforward to check that the lemma holds.
\end{proof}
Now, we turn our attention to the CM case. Let $K$ be an imaginary quadratic field and write $\mathcal{O}_K$ to denote the ring of integers of $K$. Then $\mathcal{O}_K$ is a free $\Z$-module of rank $2$. Fixing a $\Z$-basis,  we can identify $\GL_1(\mathcal{O}_K)=\mathcal{O}_K^{\times}$ as a subgroup of $\GL_2(\Z)$. In the following discussion (and henceforth) the determinant of $g$ for $g \in \mathcal{O}_K^\times$ means the determinant of $g$ considered as a matrix in $\GL_2(\Z)$.
 Moreover, we note that for any odd rational  prime $\ell$ and any integer $a\geq 1$, the determinant of any element in $\ell^a \mathcal{O}_K$ lies in $\ell^a\Z$, so we obtain the induced determinant map  $\det\colon  (\mathcal{O}_K/\ell^a \mathcal{O}_K)^{\times} \to (\Z/\ell^a\Z)^{\times}$, which does not depend on the choice of the basis. 
 \begin{Lemma}\label{usefullemma2}
Let $K$ be an imaginary quadratic field and  $\mathcal{O}_K$ be the ring of integers of $K$. Let $\ell$ be an odd rational prime unramified in $K$ and $a$ be a positive integer. Let $k$ be an integer that is coprime to $\ell$ and fix $g \in (\mathcal{O}_K/\ell\mathcal{O}_K)^\times$ with $\det g \equiv k \pmod \ell$. Then
$$\#\left\{\widetilde{g} \in (\mathcal{O}_K/\ell^a\mathcal{O}_K)^\times : \widetilde{g} \equiv g \neghs \pmod {\ell\mathcal{O}_K}, \det\widetilde{g}\equiv k \neghs \pmod{\ell^a} \right\} = \ell^{a-1}.$$
\end{Lemma}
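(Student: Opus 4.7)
The plan is to mirror the proof of \lemref{usefullemma}, working with the natural reduction map $\pi \colon (\mathcal{O}_K/\ell^a\mathcal{O}_K)^\times \to (\mathcal{O}_K/\ell\mathcal{O}_K)^\times$. Since $\ell$ is odd and unramified in $K$, one has $|(\mathcal{O}_K/\ell^j\mathcal{O}_K)^\times| = \ell^{2(j-1)}(\ell-1)(\ell-\chi_K(\ell))$ for each $j \geq 1$, which can be verified case-by-case depending on whether $\ell$ splits or is inert in $K$. In particular, $\pi$ is a surjective group homomorphism with fibers of size $\ell^{2(a-1)}$, so $|\pi^{-1}(g)| = \ell^{2(a-1)}$. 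The strategy is then to count elements of $\pi^{-1}(g)$ on which the determinant takes the prescribed value $k$.

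Next, I would show that the determinant restricts to a surjection $\det \colon \pi^{-1}(g) \to S_k$, where $S_k \coloneqq \{k' \in (\Z/\ell^a\Z)^\times : k' \equiv k \pmod{\ell}\}$. Writing $\pi^{-1}(g) = \widetilde{g}_0 \cdot \ker\pi$ for any chosen lift $\widetilde{g}_0$ of $g$, and using multiplicativity of the determinant, this reduces to showing that $\det$ carries $\ker\pi$ onto $1 + \ell\Z/\ell^a\Z$. For this, I consider the one-parameter family $1 + \ell z$ with $z \in \Z \subseteq \mathcal{O}_K$; each such element lies in $\ker \pi$, and its determinant (equivalently, its norm from $K$ to $\Q$) is the rational integer $(1+\ell z)^2 = 1 + 2\ell z + \ell^2 z^2$. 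Surjectivity then follows from the bijectivity of $f(z) = 2z + \ell z^2$ as a map $\Z/\ell^{a-1}\Z \to \Z/\ell^{a-1}\Z$, which in turn follows from Hensel's lemma, since $f'(z) = 2 + 2\ell z$ is a unit modulo $\ell$ (using that $\ell$ is odd).

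Having established both the fiber count $|\pi^{-1}(g)| = \ell^{2(a-1)}$ and the surjectivity of $\det|_{\pi^{-1}(g)}$ onto $S_k$, I would conclude as follows: the fibers of $\det|_{\pi^{-1}(g)}$ are cosets of $\ker(\det|_{\ker\pi})$ and hence all have the same cardinality $|\pi^{-1}(g)|/|S_k| = \ell^{2(a-1)}/\ell^{a-1} = \ell^{a-1}$. In particular, the fiber above $k$ has size $\ell^{a-1}$, yielding the claim. The main obstacle I anticipate is the surjectivity step: the initial and final counts are routine bookkeeping, but proving that the norm on the congruence subgroup $1 + \ell\mathcal{O}_K/\ell^a\mathcal{O}_K$ hits every target in $1 + \ell\Z/\ell^a\Z$ requires a concrete parametrization and a Hensel-type argument, and is the one place where the hypotheses ``$\ell$ odd'' and ``$\ell$ unramified in $K$'' are genuinely used.
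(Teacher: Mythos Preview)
Your proposal is correct and follows essentially the same approach as the paper: compute $|\pi^{-1}(g)| = \ell^{2(a-1)}$, identify $\det(\pi^{-1}(g))$ with $S_k$ of size $\ell^{a-1}$, and divide. In fact, you provide more justification than the paper, which simply asserts the equality $\det(\pi^{-1}(g)) = S_k$ without argument, whereas your Hensel-type computation with the rational lifts $1+\ell z$ supplies the missing surjectivity step.
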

\begin{proof}
    The reduction map $\pi \colon  (\mathcal{O}_K/\ell^a \mathcal{O}_K)^\times \to (\mathcal{O}_K/\ell\mathcal{O}_K)^\times$ is a surjective group homomorphism. Regardless of whether $\ell$ splits or is inert in $K$, we have $|\ker \pi| = \ell^{2(a-1)}$ by \lemref{Kronecker}. Therefore, 
    $$|\left\{\widetilde{g} \in (\mathcal{O}_K/\ell^a\mathcal{O}_K)^\times : \widetilde{g} \equiv g \neghs \pmod {\ell\mathcal{O}_K} \right\}| = |\pi^{-1}(g)| = |\ker \pi| = \ell^{2(a-1)}.$$
    The image of $\pi^{-1}(g)$ under $\det \colon  (\mathcal{O}/\ell^a\mathcal{O})^\times \to (\Z/\ell^a\Z)^\times$ is
    $$\det(\pi^{-1}(g)) = \left\{ k' \in (\Z/\ell^a\Z)^\times : k' \equiv k \neghs \pmod \ell \right\}.$$
    Thus, we have  and $\left|\det (\pi^{-1}(g))\right| = \ell^{a-1}$. Finally, note that
    $$\#\left\{\widetilde{g} \in (\mathcal{O}/\ell^a\mathcal{O})^\times : \widetilde{g} \equiv g \neghs \pmod {\ell\mathcal{O}_K}, \det g \equiv k \neghs \pmod {\ell^a} \right\} = \frac{|\pi^{-1}(g)|}{|\det(\pi^{-1}(g))|} = \ell^{a-1}.$$ \qedhere
\end{proof}
We now prove a  corollary that will be used to identify the Euler factors of the Koblitz constant $C^{\Prime}_{E,n,k}$ for CM curves.
\begin{Corollary}\label{countingprimeCM}
    Let $K$ be an imaginary quadratic field. Fix an odd rational prime $\ell$ that is unramified in $K$. Let $k$ be an integer that is coprime to $\ell$. If $\ell$ splits in $K$, then
    \begin{align*}\#\{ g \in (\mathcal{O}_K/\ell^a\mathcal{O}_K)^\times : \det(g-1) &\not\equiv 0 \neghs \pmod \ell, \det g \equiv k \neghs \pmod {\ell^a}  \} \\
    &= \begin{cases}
        \ell^{a-1}(\ell-2) & \text{ if } k \equiv 1 \neghs \pmod \ell, \\
        \ell^{a-1}(\ell-3) & \text{ if } k \not \equiv 1 \neghs \pmod \ell.
    \end{cases}
    \end{align*}
    If $\ell$ is inert in $K$, then
    \begin{align*}
    \#\{ g \in (\mathcal{O}_K/\ell^a\mathcal{O}_K)^\times : \det(g-1) &\not\equiv 0 \neghs \pmod {\ell}, \det g \equiv k \neghs \pmod {\ell^a}  \} \\
    &= \begin{cases}
        \ell^a & \text{ if } k \equiv 1 \neghs \pmod \ell, \\
        \ell^{a-1}(\ell+1) & \text{ if } k \not \equiv 1 \neghs \pmod \ell.
    \end{cases}
    \end{align*}
\end{Corollary}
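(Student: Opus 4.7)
The plan is to mirror the proof of \corref{countingprime} in the CM setting, using \lemref{usefullemma2} to reduce the count modulo $\ell^a$ to a count modulo $\ell$. First I would observe that the condition $\det(g-1) \not\equiv 0 \pmod \ell$ is preserved under lifting: if $\widetilde{g} \in (\mathcal{O}_K/\ell^a\mathcal{O}_K)^\times$ reduces to $g \in (\mathcal{O}_K/\ell\mathcal{O}_K)^\times$, then $\widetilde{g} - 1 \equiv g - 1 \pmod{\ell\mathcal{O}_K}$, so the determinants agree modulo $\ell$. By \lemref{usefullemma2}, each such $g$ with $\det g \equiv k \pmod \ell$ lifts in exactly $\ell^{a-1}$ ways to an element of $(\mathcal{O}_K/\ell^a\mathcal{O}_K)^\times$ with $\det \widetilde{g} \equiv k \pmod{\ell^a}$. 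Hence the count in question equals $\ell^{a-1}$ times the corresponding mod-$\ell$ count, and it suffices to resolve the latter.

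The essential observation is that, under the identification $\mathcal{O}_K^\times \subseteq \GL_2(\Z)$, the induced determinant map on $(\mathcal{O}_K/\ell\mathcal{O}_K)^\times$ coincides with the reduction modulo $\ell$ of the norm $N_{K/\Q}$, independently of the chosen $\Z$-basis (as noted just before \lemref{usefullemma2}). In the split case, I would use the isomorphism $\mathcal{O}_K/\ell\mathcal{O}_K \simeq \F_\ell \times \F_\ell$ to identify $\det$ with the product map $(x,y) \mapsto xy$; the condition $\det(g-1) \not\equiv 0 \pmod \ell$ then becomes $(x-1)(y-1) \neq 0$ in $\F_\ell$, i.e., $x \neq 1$ and $y \neq 1$. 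Parametrizing by $x \in \F_\ell^\times$ and setting $y = kx^{-1}$, the remaining restriction is $x \neq 1$ together with $x \neq k$. These two exclusions coincide precisely when $k \equiv 1 \pmod \ell$, yielding $\ell - 2$ choices, and are otherwise distinct, yielding $\ell - 3$ choices.

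In the inert case, I would use the isomorphism $\mathcal{O}_K/\ell\mathcal{O}_K \simeq \F_{\ell^2}$, under which the determinant becomes the norm $N \colon \F_{\ell^2}^\times \to \F_\ell^\times$, a surjective group homomorphism whose fibers all have size $\ell + 1$. Since $\F_{\ell^2}$ is a field, $\det(g-1) \equiv 0 \pmod \ell$ is equivalent to $g - 1 = 0$ in $\F_{\ell^2}$, i.e., to $g = 1$. Thus for $k \equiv 1 \pmod \ell$ I remove the single element $g = 1$ from the fiber of size $\ell + 1$, leaving $\ell$; and for $k \not\equiv 1 \pmod \ell$, the element $g = 1$ already fails $N(g) = k$, so the full fiber of size $\ell + 1$ survives. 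Multiplying through by $\ell^{a-1}$ recovers the four stated counts.

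I do not anticipate a substantive obstacle; the argument is structurally parallel to \corref{countingprime}, with $\GL_2(\Z/\ell\Z)$ replaced by $(\mathcal{O}_K/\ell\mathcal{O}_K)^\times$. The only real subtlety is correctly identifying the determinant with the appropriate norm in each of the split and inert cases, which is precisely what drives the distinct Euler-factor patterns in the two tables and ultimately produces different average behavior in Koblitz's problem for CM curves.
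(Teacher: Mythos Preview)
Your proposal is correct and follows essentially the same approach as the paper: invoke \lemref{usefullemma2} to reduce to $a=1$, then identify $\det$ with the product map in the split case and with the norm $N_{\F_{\ell^2}/\F_\ell}$ in the inert case to count directly. Your inert-case explanation is in fact slightly cleaner than the paper's, since you make explicit that $N(g-1)=0$ forces $g=1$ because $\F_{\ell^2}$ is a field, whereas the paper just asserts ``one less choice due to the constraint.''
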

\begin{proof}
   By \lemref{usefullemma2}, it suffices to compute the case where $a = 1$.
    
    Suppose $\ell$ splits in $K$. Then we have that  $\mathcal{O}_K/\ell \mathcal{O}_K \simeq \F_\ell \times \F_\ell$ and the determinant map $\det \colon  \F_\ell^{\times} \times \F_\ell^{\times} \to \F_\ell^{\times}$ is given by  $(a,b) \mapsto ab$. Thus, the set in question can be expressed as
    $$\left\{(g_1,g_2) \in \F_\ell^\times \times \F_\ell^\times : g_1-1, g_2-1 \in \F_\ell^\times, g_1g_2 \equiv k \neghs \pmod {\ell}\right\}.$$
    Hence, any element in the set is of the form $(g,kg^{-1})$ where both $g$ and $kg^{-1}$ are not congruent to $1$ modulo $\ell$. Thus, the size of the set is $\ell-2$ if $k \equiv 1 \pmod \ell$ and $\ell-3$ otherwise.

    Now, suppose $\ell$ is inert in $K$. Then we have $\mathcal{O}_K/\ell\mathcal{O}_K \simeq \F_{\ell^2}$ and the determinant map $\det \colon  \F_{\ell^2} \to \F_\ell$ is identified with the norm map $N_{\F_{\ell^2}/\F_\ell}; x \mapsto x^{\ell+1}$. Thus, the set in consideration can be understood as
    $$\left\{g \in \F_{\ell^2}^\times : (g-1)^{\ell+1} \in \F_\ell^\times, g^{\ell+1} \equiv k \neghs \pmod \ell\right\}.$$
    For each $k$ coprime to $\ell$, there are exactly $\ell+1$ choices of $g \in \F_{\ell^2}^\times$ with $g^{\ell+1} \equiv k \pmod \ell$. In case $k \equiv 1 \pmod \ell$, we have one less choice due to the constraint $(g-1)^{\ell+1} \in \F_\ell^\times$.
\end{proof}

\section{Definitions of the constants}\label{DefCon}
\subsection{On the cyclicity constant}\label{OnCycCon}
In this subsection, we introduce the definition of the cyclicity constant $C_E^{\Cyc}$, given by Serre, and its average counterpart $C^{\Cyc}$. For coprime integers $n$ and $k$, we introduce  the cyclicity constant for primes in arithmetic progression $C_{E, n, k}^{\Cyc}$, given by Akbal and G\"ulo\u{g}lu, and its average counterpart $C^{\Cyc}_{n, k}$.

First of all, Serre \cite[pp.\ 465-468]{MR3223094} defined the cyclicity constant $C^{\Cyc}_E$ to be
\begin{equation}\label{CcycE}
    C_E^{\Cyc} \coloneqq \sum_{n \geq 1} \frac{\mu(n)}{[\Q(E[n]):\Q]},
\end{equation}
where $\mu$ denotes the M\"obius function and $\Q(E[n])$ is the $n$-th division field of $E$. He proved that, under GRH, $C^{\Cyc}_E$ is the density of primes of cyclic reduction for $E$; see Conjecture \ref{Cyclicity}.

For a non-CM elliptic curve $E/\Q$, Jones \cite[p.\ 692]{MR2534114} observed that \eqref{CcycE} can be expressed as an almost Euler product using the adelic level of $E$. Specifically, he showed that
\begin{equation}\label{Ccycalmosteuler}
    C_E^{\Cyc} = \left(\sum_{d\mid m_E} \frac{\mu(d)}{[\Q(E[d]):\Q]}\right) \prod_{\ell \nmid m_E} \left( 1 - \frac{1}{|\GL_2(\Z/\ell\Z)|}\right).
\end{equation}
The average counterpart of $C^{\Cyc}_E$ is 
\begin{equation}\label{Ccyc} 
    C^{\Cyc} \coloneqq \prod_{\ell}\left( 1 - \frac{1}{|\GL_2(\Z/\ell\Z)|}\right) \approx 0.813752.
\end{equation}
As mentioned in the introduction, Gekeler \cite{MR2226271} demonstrated that $C^{\Cyc}$ represents the average cyclicity constant in the local viewpoint. Later, Banks and Shparlinski \cite{MR2570668} verified that the constant also describes the density of primes of cyclic reduction on average in the global sense. Furthermore, Jones \cite{MR2534114} verified that the average of $C^{\Cyc}_E$ coincides with $C^{\Cyc}$.

Let $\zeta_n$ denote a primitive $n$-th root of unity, and let $\sigma_k \in \Gal(\Q(\zeta_n)/\Q)$ map $\zeta_n \mapsto \zeta_n^k$. Define
$$\gamma_{n,k}(\Q(E[d])) \coloneqq \begin{cases}
    1 & \text{ if } \sigma_k \text{ fixes } \Q(E[d]) \cap \Q(\zeta_n) \text{ pointwise},\\
    0 & \text{ otherwise}.
\end{cases}$$
Akbal and G\"ulo\u{g}lu \cite{MR4504664} defined the constant $C^{\Cyc}_{E,n,k}$ as follows,
\begin{equation} \label{CcycEnk-def}
    C^{\Cyc}_{E,n,k} \coloneqq \sum_{d \geq 1} \frac{\mu(d) \gamma_{n,k}(\Q(E[d]))}{[\Q(E[d])\Q(\zeta_n):\Q]}.
\end{equation}
They proved that this constant represents the density of primes $p \equiv k \pmod n$ of cyclic reduction for $E$, under GRH. Recently, Jones and the first author \cite{Jones-Lee} demonstrated that for a non-CM elliptic curve $E/\Q$, this density can be expressed as an almost Euler product as follows,
\begin{equation}\label{CCycEnk}
C^{\Cyc}_{E,n,k} = \left( \sum_{d\mid m_E} \frac{\mu(d)\gamma_{n,k}(\Q(E[d]))}{[\Q(E[d])\Q(\zeta_n) : \Q]} \right) \prod_{\substack{\ell \nmid m_E \\ \ell \mid (n,k-1)}} \left( 1 - \frac{\phi(\ell)}{|\GL_2(\Z/\ell\Z)|}\right) \prod_{\substack{\ell \nmid nm_E}}\left( 1 - \frac{1}{|\GL_2(\Z/\ell\Z)|}\right).
\end{equation}
Finally, the average counterpart of $C^{\Cyc}_{E,n,k}$ is given by 
\begin{equation}\label{Ccycnk}
    C^{\Cyc}_{n,k} \coloneqq \frac{1}{\phi(n)}\prod_{\substack{\ell \mid (n,k-1)}} \left( 1 - \frac{\phi(\ell)}{|\GL_2(\Z/\ell\Z)|}\right) \prod_{\substack{\ell \nmid n}}\left( 1 - \frac{1}{|\GL_2(\Z/\ell\Z)|}\right).
\end{equation}
Observe that \eqref{Ccycnk} coincides with $\eqref{CCycEnk}$ if $m_E$ is taken to be $1$. While $m_E = 1$ is impossible for any given elliptic curve over $\Q$, it is plausible to think that the role of $m_E$ is invisible when considered over the family of elliptic curves ordered by height. Indeed, as mentioned in the introduction, the first author \cite{Lee} demonstrated that $C^{\Cyc}_{n,k}$ represents the average density of primes $p \equiv k \pmod n$ of cyclic reduction for the  family of elliptic curves ordered by height. 

We now prove a proposition that serves as a sanity check for $C^{\Cyc}_{n,k}$. While it can be derived from the main theorem of \cite{Lee}, we opt to include a self-contained proof to draw a parallel with the upcoming Proposition \ref{AvgSum}.
\begin{Proposition}\label{Avgsumcyc}
    For any positive integer $n$, we have
    $$\sum_{\substack{1 \leq k \leq n \\ (n,k) = 1}}C^{\Cyc}_{n,k} = C^{\Cyc},$$
    where $C^{\Cyc}$ and $C^{\Cyc}_{n,k}$ are defined in \eqref{Ccyc} and \eqref{Ccycnk}.
\end{Proposition}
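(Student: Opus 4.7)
The plan is to rewrite each summand so that the $k$-dependent part is clearly separated, then use the Chinese remainder theorem to turn the sum into a product of local sums, one for each prime $\ell \mid n$. The key observation is that in the formula \eqref{Ccycnk} for $C^{\Cyc}_{n,k}$, the factor $\prod_{\ell \nmid n}(1 - 1/|\GL_2(\Z/\ell\Z)|)$ and the prefactor $1/\phi(n)$ are both independent of $k$, so they pull out of the sum immediately. The entire proof then reduces to establishing the local identity
\begin{equation*}
\frac{1}{\phi(n)}\sum_{\substack{1 \leq k \leq n \\ (n,k)=1}}\prod_{\ell \mid (n,k-1)}\left(1-\frac{\phi(\ell)}{|\GL_2(\Z/\ell\Z)|}\right) = \prod_{\ell \mid n}\left(1 - \frac{1}{|\GL_2(\Z/\ell\Z)|}\right),
\end{equation*}
because multiplying both sides by $\prod_{\ell \nmid n}(1 - 1/|\GL_2(\Z/\ell\Z)|)$ gives exactly the desired equality with $C^{\Cyc}$.

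Next, I would introduce, for each prime $\ell \mid n$, the local weight
\begin{equation*}
f_\ell(k) = \begin{cases} 1 - \dfrac{\phi(\ell)}{|\GL_2(\Z/\ell\Z)|} & \text{if } k \equiv 1 \pmod{\ell}, \\ 1 & \text{otherwise}, \end{cases}
\end{equation*}
so that the product over $\ell \mid (n,k-1)$ equals $\prod_{\ell \mid n} f_\ell(k)$ (since $\ell$ appears in the original product precisely when $\ell \mid n$ and $k \equiv 1 \pmod{\ell}$). Because $f_\ell(k)$ only depends on the reduction of $k$ modulo $\ell^{v_\ell(n)}$, the Chinese remainder theorem factors the sum over reduced residues mod $n$ into a product of sums over reduced residues mod $\ell^{v_\ell(n)}$ for each $\ell \mid n$.

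The main work is then the local computation for each prime power $\ell^a \parallel n$: count that there are $\ell^{a-1}$ units $k \pmod{\ell^a}$ with $k \equiv 1 \pmod{\ell}$ and $\phi(\ell^a) - \ell^{a-1} = \ell^{a-1}(\ell-2)$ with $k \not\equiv 1 \pmod{\ell}$, then verify algebraically that
\begin{equation*}
\sum_{k \in (\Z/\ell^a\Z)^\times} f_\ell(k) = \ell^{a-1}\left(\ell - 1 - \frac{\phi(\ell)}{|\GL_2(\Z/\ell\Z)|}\right) = \phi(\ell^a)\left(1 - \frac{1}{|\GL_2(\Z/\ell\Z)|}\right).
\end{equation*}
Multiplying over $\ell \mid n$ yields $\phi(n)\prod_{\ell \mid n}(1 - 1/|\GL_2(\Z/\ell\Z)|)$, and dividing by $\phi(n)$ produces the target local identity.

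I do not anticipate a genuine obstacle here; the whole argument is a CRT decomposition followed by a one-line algebraic identity. The only point to watch is the edge case $\ell = 2$ with $a = 1$, where the only residue is $k \equiv 1$ and the sum consists of a single term; a quick check confirms the formula still gives $\phi(2)(1 - 1/|\GL_2(\Z/2\Z)|) = 5/6$, consistent with the general calculation. This proof strategy is also structurally parallel to the argument that will be needed for $C^{\Prime}_{n,k}$ in \propref{AvgSum}, which is likely why the authors include a self-contained version here rather than citing \cite{Lee}.
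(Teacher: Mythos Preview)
Your proposal is correct and follows essentially the same approach as the paper: reduce to the identity $\frac{1}{\phi(n)}\sum_{(k,n)=1}\prod_{\ell \mid (n,k-1)}f(\ell) = \prod_{\ell \mid n}(1 - 1/|\GL_2(\Z/\ell\Z)|)$, then verify it at prime powers and assemble via multiplicativity (you phrase this as a CRT factorization, the paper as showing the function $F$ is multiplicative, but the content is identical). The local computation you give matches the paper's verbatim.
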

\begin{proof}
    For notational convenience, we define
    $$f(\ell) = 1 - \frac{\phi(\ell)}{|\GL_2(\Z/\ell\Z)|}.$$
    It suffices to verify that
    \begin{equation}\label{obviousequation}
        F(n) \coloneqq \frac{1}{\phi(n)}\sum_{\substack{1 \leq k \leq n \\ (k,n)=1}} \prod_{\substack{\ell \mid n \\ k \equiv 1 (\ell)}}f(\ell) = \prod_{\ell \mid n } \left(1-\frac{1}{|\GL_2(\Z/\ell\Z)|}\right).
    \end{equation}
    First, we prove that \eqref{obviousequation} holds for $n = p^a$, a prime power. Observe that
\begin{align*}
    F(p^a) &= \frac{1}{\phi(p^a)} \sum_{\substack{1\leq k\leq p^a\\ (k,p^a) = 1}} \prod_{\substack{\ell \mid p^a \\ k \equiv 1(\ell)}} f(\ell)\\
    &= \frac{1}{\phi(p^a)} \left( p^{a-1} f(p) + p^{a-1}(p-2) \right)  = 1 - \frac{1}{|\GL_2(\Z/p\Z)|}.
\end{align*}
    Now, we prove that $F$ is multiplicative. Let $p^a$ be a prime power and $n$ be a positive integer coprime to $p$. Then 
    \begin{align*}
        F(p^an) &= \frac{1}{\phi(p^an)}\sum_{\substack{1 \leq k \leq p^an \\ (k,p^an) = 1}} \prod_{\substack{\ell \mid p^an \\ k \equiv 1 (\ell)}} f(\ell) \\
        &= \frac{1}{\phi(p^a)} \cdot \frac{1}{\phi(n)} \left[ \sum_{\substack{1 \leq k \leq p^a n \\ (k,pn) = 1 \\ k \equiv 1 (p)}} f(p) \prod_{\substack{\ell \mid n \\ k \equiv 1 (\ell)}} f(\ell) + \sum_{\substack{1 \leq k \leq p^a n \\ (k,pn) = 1 \\ k \not \equiv 1 (p)}} \prod_{\substack{\ell \mid n \\ k \equiv 1 (\ell)}}f(\ell) \right]\\
        &= \frac{\left( p^{a-1} + p^{a-1}(p-2)\right)}{\phi(p^a)} \cdot \frac{1}{\phi(n)} \left[ \sum_{\substack{1 \leq k \leq n \\ (k,n) = 1 }}\prod_{\substack{\ell\mid n \\ k \equiv 1 (\ell)}} f(\ell)\right] = F(p^a) \cdot F(n).
    \end{align*}
    This completes the proof.
\end{proof}

\subsection{On the Koblitz constant} \label{OnKobCon}
Now we give the definition of Koblitz's constant $C_E^{\Prime}$ defined by Zywina and its average counterpart $C^{\Prime}$ given by Balog, Cojocaru, and David. Based on Zywina's method, for coprime integers $n, k$, we propose Koblitz's constants $C_{E, n, k}^{\Prime}$ for primes in arithmetic progression and its average counterpart $C_{n, k}^{\Prime}$.  

Let $E/\Q$ be a non-CM elliptic curve of conductor $N_E$ and $m$ be a positive integer. For $p \nmid mN_E$, let $\Frob_p$ be a Frobenius element at $p$ in $\Gal(\overline{\Q}/\Q)$ (see \cite[2.1, I-6]{MR1484415} for the definition of $\Frob_p$). 
We have that
\begin{align}\label{Koblitzcondition}
    |\widetilde{E}_p(\F_p)| \equiv \det(I - \rho_{E,m}(\Frob_p))  \pmod m.
\end{align}
by \cite[Chapter V.\ Theorem 2.3.1]{MR2514094}. Thus, we see that an odd prime $p$ is of Koblitz reduction if and only if the right-hand side of \eqref{Koblitzcondition} is invertible modulo $m$, for every $m < |\widetilde{E}_p(\F_p)|$ such that $\gcd(p,m) = 1$.\footnote{This biconditional statement fails if $|\widetilde{E}_p(\F_p)| = p^r$ for some integer $r \geq 2$. However, this can only happen if $p = 2$ due to the Hasse bound.} For such an integer $m$, we set
\begin{equation} \label{PsiPrime}
\Psi^{\Prime}(m) \coloneqq \left\{ M \in \GL_2(\Z/m\Z) : \det(I-M) \in (\Z/m\Z)^\times\right\}.
\end{equation}
Define the ratio
$$\delta^{\Prime}_E(m) \coloneqq \frac{|G_E(m) \cap \Psi^{\Prime}(m)|}{|G_E(m)|}.$$
The Koblitz constant, proposed by Zywina \cite{MR2805578}, is defined by
\begin{equation}\label{Koblitzconstantoriginal}
    C_E^{\Prime} \coloneqq \lim_{m\to \infty} \frac{\delta_E^{\Prime}(m)}{\prod_{\ell \mid m} \left(1-1/\ell\right)},
\end{equation}
where the limit is taken over all positive integers ordered by divisibility.

We start by proving some properties of $\delta_E^{\Prime}(\cdot)$, which were originally remarked in \cite{MR2805578}.

\begin{Proposition}\label{Koblitzeulerproduct}
    Let $E/\Q$ be a non-CM elliptic curve of adelic level $m_E$. Then $\delta_E^{\Prime}(\cdot)$, as an arithmetic function, satisfies the following properties:
    \begin{enumerate}
        \item for any positive integer $m$, $\delta_E^{\Prime}(m) = \delta_E^{\Prime}(\rad(m))$;
        \item for any prime $\ell \nmid m_E$ and integer $d$ coprime to $\ell$, $\delta_E^{\Prime}(d \ell) = \delta_E^{\Prime}(d) \cdot \delta_E^{\Prime}(\ell)$.
    \end{enumerate}
    Therefore, \eqref{Koblitzconstantoriginal} can be expressed as follows,
    \begin{equation}\label{Kobproduct}
        C^{\Prime}_E = \frac{\delta_E^{\Prime}(\rad (m_E))}{\prod_{\ell \mid m_E} (1-1/\ell)} \cdot \prod_{\ell \nmid m_E} \frac{\delta_{E}^{\Prime}(\ell)}{1-1/\ell}.
    \end{equation}
\end{Proposition}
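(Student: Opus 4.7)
The plan is to establish (1) and (2) by direct counting, leveraging the definition of $\Psi^{\Prime}(m)$ and the structural results on $G_E$ from \sectionref{Preliminaries}, and then assemble the Euler product (\ref{Kobproduct}) from these two multiplicativity-type properties.

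\textbf{Property (1).} The key observation is that the condition $\det(I-M) \in (\Z/m\Z)^\times$ only depends on $M \bmod \rad(m)$: indeed, $\det(I-M)$ is a unit modulo $m$ if and only if it is nonzero modulo every prime divisor of $m$, a condition determined by $M_{\rad(m)}$. Writing $r = \rad(m)$, this means that the reduction map $\pi \colon \GL_2(\Z/m\Z) \to \GL_2(\Z/r\Z)$ satisfies $\pi^{-1}(\Psi^{\Prime}(r)) = \Psi^{\Prime}(m)$. Restricting $\pi$ to $G_E(m)$ gives a surjective group homomorphism $G_E(m) \twoheadrightarrow G_E(r)$, whose fibers all have the same size. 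Hence
$$
|G_E(m) \cap \Psi^{\Prime}(m)| = |G_E(m) \cap \pi^{-1}(G_E(r) \cap \Psi^{\Prime}(r))| = |\ker(\pi|_{G_E(m)})| \cdot |G_E(r) \cap \Psi^{\Prime}(r)|,
$$
and dividing by $|G_E(m)| = |\ker(\pi|_{G_E(m)})| \cdot |G_E(r)|$ yields $\delta_E^{\Prime}(m) = \delta_E^{\Prime}(r)$.

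\textbf{Property (2).} Suppose $\ell \nmid m_E$ and $(d, \ell) = 1$. Since $(\ell, m_E) = 1$, the argument in \lemref{propertyofmE} (applied with the factorization of $d\ell$ into its $m_E$-part and $\ell \cdot (\text{coprime to } m_E)$ part) shows that the CRT identification $\GL_2(\Z/d\ell\Z) \simeq \GL_2(\Z/d\Z) \times \GL_2(\Z/\ell\Z)$ restricts to an isomorphism
$$
G_E(d\ell) \simeq G_E(d) \times G_E(\ell),
$$
with $G_E(\ell) = \GL_2(\Z/\ell\Z)$. Under the same CRT identification, and because the unit condition on $\det(I-M)$ is checked prime by prime, we have $\Psi^{\Prime}(d\ell) = \Psi^{\Prime}(d) \times \Psi^{\Prime}(\ell)$. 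Multiplicativity of the cardinalities in each factor gives
$$
|G_E(d\ell) \cap \Psi^{\Prime}(d\ell)| = |G_E(d) \cap \Psi^{\Prime}(d)| \cdot |G_E(\ell) \cap \Psi^{\Prime}(\ell)|,
$$
and dividing by the analogous identity for $|G_E(d\ell)|$ yields $\delta_E^{\Prime}(d\ell) = \delta_E^{\Prime}(d) \cdot \delta_E^{\Prime}(\ell)$.

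\textbf{Euler product.} The limit in (\ref{Koblitzconstantoriginal}) is taken over positive integers ordered by divisibility, so we may restrict to $m$ of the form $m = \rad(m_E) \cdot \prod_{i=1}^s \ell_i$ where the $\ell_i$ are distinct primes not dividing $m_E$, and such $m$ are cofinal. Applying (1) to pass from $m_E$ to $\rad(m_E)$ and then (2) inductively,
$$
\delta_E^{\Prime}(m) = \delta_E^{\Prime}(\rad(m_E)) \prod_{i=1}^s \delta_E^{\Prime}(\ell_i),
$$
while $\prod_{\ell \mid m}(1-1/\ell) = \prod_{\ell \mid m_E}(1-1/\ell) \prod_{i=1}^s(1-1/\ell_i)$. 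Dividing and taking the limit over all such $m$ yields (\ref{Kobproduct}).

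\textbf{Main obstacle.} The only subtle point is verifying the product decomposition $G_E(d\ell) \simeq G_E(d) \times G_E(\ell)$ in (2), since \lemref{propertyofmE} is phrased for a factor dividing $m_E^\infty$ and one coprime to $m_E$, while here $d$ is arbitrary (coprime to $\ell$). This is handled by splitting $d = d_1 d_2$ with $d_1 \mid m_E^\infty$ and $(d_2, m_E) = 1$ and applying \lemref{propertyofmE} with the second factor $d_2 \ell$ (which is coprime to $m_E$); convergence of the resulting infinite product is not needed for the stated equality, as it merely expresses the defining limit.
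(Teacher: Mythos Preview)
Your proof is correct and follows essentially the same approach as the paper's: for (1) you use that the unit condition on $\det(I-M)$ depends only on $M \bmod \rad(m)$ and then count fibers of the reduction map, and for (2) you use the CRT splitting together with \lemref{propertyofmE} to factor both $G_E(d\ell)$ and $\Psi^{\Prime}(d\ell)$. In fact you are more careful than the paper on one point: the paper invokes \lemref{propertyofmE} directly to obtain $G_E(d\ell) \simeq G_E(d) \times \GL_2(\Z/\ell\Z)$ without comment, whereas you correctly note that the lemma as stated requires one factor to divide $m_E^\infty$, and you supply the missing step of writing $d = d_1 d_2$ with $d_1 \mid m_E^\infty$ and $(d_2, m_E)=1$.
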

\begin{proof}
    Let us prove item (1). Let $r = \rad(m)$ and $\varpi \colon G_E(m) \to G_E(r)$ be the usual reduction map. In particular, $\varpi$ is a surjective group homomorphism. We will show that 
    \begin{equation}\label{easycheck0}
        \varpi^{-1}\left(G_E(r) \cap \Psi^{\Prime}(r)\right) = G_E(m) \cap \Psi^{\Prime}(m).
    \end{equation}
    Let $M \in G_E(r) \cap \Psi^{\Prime}(r)$ and $\widetilde{M} \in \varpi^{-1}(M)$. Recall that $\det(M-I)$ is invertible modulo $r$ and that $m$ is only supported by the prime factors of $r$. Thus, $\det(\widetilde{M}-I)$ is invertible modulo $m$ and $\widetilde{M} \in G_E(m) \cap \Psi^{\Prime}(m)$. The other inclusion is obvious, and hence \eqref{easycheck0} is obtained. Therefore,
    
    $$\delta_E^{\Prime}(m) = \frac{\left| G_E(m) \cap \Psi^{\Prime}(m)\right|}{\left|G_E(m)\right|} = \frac{\left|\varpi^{-1}\left(G_E(r) \cap \Psi^{\Prime}(r)\right)\right|}{\left|\varpi^{-1}\left(G_E(r)\right)\right|} = \delta_E^{\Prime}(r).$$

    Let us prove item (2). By \lemref{propertyofmE}, we have an isomorphism,
    \begin{equation}\label{easyiso}
        G_E(d\ell) \simeq G_E(d) \times \GL_2(\Z/\ell\Z).
    \end{equation}
   It suffices to show that the isomorphism induces a bijection between the two sets
    \begin{equation}\label{easybij0}
        G_E(d\ell) \cap\Psi^{\Prime}(d\ell) \, \text{ and } \,
        \left(G_E(d) \cap \Psi^{\Prime}(d)\right) \times \left(\GL_2(\Z/\ell\Z) \cap \Psi^{\Prime}(\ell)\right).
    \end{equation}
    Take $M \in G_E(d\ell) \cap \Psi^{\Prime}(d\ell)$. By a similar argument to the proof of (1), we have that $M_d \in G_E(d) \cap \Psi^{\Prime}(d)$ and $M_\ell \in \GL_2(\Z/\ell\Z) \cap \Psi^{\Prime}(\ell)$.    
Now, let $M' \in G_E(d) \cap \Psi^{\Prime}(d)$ and $M'' \in \GL_2(\Z/\ell\Z) \cap \Psi^{\Prime}(\ell)$. Viewing $(M',M'') \in G_E(d) \times \GL_2(\Z/\ell\Z)$, there exists a unique element $M \in G_E(d\ell)$ with $M_d = M'$ and $M_\ell = M''$ by \eqref{easyiso}. Since $\det(M'-I)\in (\Z/d\Z)^{\times}$ and $\det(M''-I)\in (\Z/\ell\Z)^{\times}$,  we have  $\det(M-I)\in (\Z/d\ell\Z)^{\times}$; in particular, $M \in \Psi^{\Prime}(d\ell)$. Therefore, \eqref{easybij0} is established.

Along with \eqref{easyiso}, we obtain
\begin{align*}
        \delta_{E}^{\Prime}(d\ell) &= \frac{\left|G_E(d\ell)\cap \Psi^{\Prime}(d\ell)\right|}{\left|G_E(d\ell)\right|} \\
        &= \frac{|G_E(d) \cap \Psi^{\Prime}(d)|}{|G_E(d)|} \cdot \frac{|\GL_2(\Z/\ell\Z) \cap \Psi^{\Prime}(\ell)|}{|\GL_2(\Z/\ell\Z)|} \\
        &= \delta^{\Prime}_E(d) \cdot \delta^{\Prime}_E(\ell).
    \end{align*}
    This completes the proof.
\end{proof}
\begin{Remark}\label{absconv}
    Suppose that $\ell \nmid m_E$ and $M \in \GL_2(\Z/\ell\Z)$. Note that $\det(M-I) \in (\Z/\ell\Z)^\times$ if and only if $1$ is not an eigenvalue of $M$. One can check from Table 12.4 in \cite[XVIII]{MR1878556} that
    \begin{equation*}
        \#\left\{ M \in \GL_2(\Z/\ell \Z) : M \text{ has eigenvalues } 1 \text{ and } k \right\} = \begin{cases}
        \ell^2 + \ell & \text{ if } k \not \equiv 1 \pmod \ell,\\
        \ell^2 & \text{ if } k \equiv 1 \pmod \ell.
    \end{cases}
    \end{equation*}
    Thus, we see that
    \begin{equation}\label{Koblitzpart}
        \frac{\delta_E^{\Prime}(\ell)}{1-1/\ell} = \frac{\ell}{\ell-1} \cdot \left(1-\frac{(\ell-2)(\ell^2+\ell) + \ell^2}{(\ell^2-1)(\ell^2-\ell)}\right) = 1 - \frac{\ell^2-\ell-1}{(\ell-1)^3(\ell+1)} \sim 1 - \frac{1}{\ell^2} \hs \text{ as } \ell \to \infty,
    \end{equation}
    and hence the infinite product in \eqref{Kobproduct} converges absolutely.
\end{Remark}

The average counterpart of $C^{\Prime}_E$ is given by
\begin{equation}\label{Cprime}
    C^{\Prime} \coloneqq  \prod_{\ell} \left( 1 - \frac{\ell^2-\ell-1}{(\ell-1)^3(\ell+1)}\right) \approx 0.505166.
\end{equation}
As mentioned earlier, Balog, Cojocaru, and David \cite{MR2843097} demonstrated that $C^{\Prime}$ represents the average Koblitz constant, while Jones \cite{MR2534114} verified that the average of $C^{\Prime}_E$ coincides with $C^{\Prime}$. Unlike for the cyclicity problem, Koblitz's problem in arithmetic progressions has not yet been studied in the literature. We construct $C^{\Prime}_{E,n,k}$ in a parallel way to Zywina's method and propose a candidate for the average constant $C^{\Prime}_{n,k}$.

Let $E/\Q$ be an elliptic curve of conductor $N_E$. For a prime $p \nmid nN_E$, let $\Frob_p$ be a Frobenius element lying above $p$ in $\Gal(\overline{\Q}/\Q)$. We have that
$$\det(\rho_{E,n}(\Frob_p)) \equiv p \pmod n.$$
Along with \eqref{Koblitzcondition}, let us consider the set
\begin{equation}\label{Psiprimenk}
    \Psi^{\Prime}_{n,k}(m) \coloneqq \left\{ M \in \GL_2(\Z/m\Z) : \det(I-M) \in (\Z/m\Z)^\times, \det M \equiv k \neghs \pmod {\gcd(m,n)}\right\}.
\end{equation}
One may note that $\rho_{E,m}(\Frob_p) \in G_E(m) \cap \Psi^{\Prime}_{n,k}(m)$ if and only if $p \equiv k \pmod {\gcd(n, m)}$ and $|\widetilde{E}_p(\F_p)|$ is invertible $\Z/m\Z$. For this reason, we define the ratio
\begin{equation*}
    \delta^{\Prime}_{E,n,k}(m) \coloneqq \frac{\left|G_E(m) \cap \Psi^{\Prime}_{n,k}(m)\right|}{\left|G_E(m)\right|}.
\end{equation*}
Building upon Zywina's approach, we are led to define 
\begin{equation}\label{KoblitzconstantAPoriginal}
    C_{E,n,k}^{\Prime} \coloneqq \lim_{m\to \infty} \frac{\delta_{E,n,k}^{\Prime}(m)}{\prod_{\ell \mid m} \left(1-1/\ell\right)},
\end{equation}
where the limit is taken over all positive integers, ordered by divisibility.

\begin{Proposition}\label{KoblitzAPeulerproduct}
    Let $E/\Q$ be a non-CM elliptic curve of adelic level $m_E$ and $n$ be a positive integer. Let $L$ be defined as in \eqref{E:Def-L}.
    Then, $\delta_{E,n,k}^{\Prime}(\cdot)$, as an arithmetic function, satisfies the following properties:
    \begin{enumerate}
        \item Let $L \mid L' \mid L^\infty$. Then, $\delta_{E,n,k}^{\Prime}(L) = \delta_{E,n,k}^{\Prime}(L')$;
        \item Let $\ell^\alpha$ be a prime power and $d$ be a positive integer with $(\ell, Ld) = 1$. Then, $\delta_{E,n,k}^{\Prime}(d\ell^a) = \delta_{E,n,k}^{\Prime}(d) \cdot \delta_{E,n,k}^{\Prime}(\ell^\alpha)$.
        \item Let $\ell^\alpha \parallel n$ and $(\ell, L) = 1$. Then, for any $\beta \geq \alpha$, $\delta^{\Prime}_{E,n,k}(\ell^\beta) = \delta^{\Prime}_{E,n,k}(\ell^\alpha)$. Further, if $\ell \nmid nL$, we have $\delta_{E,n,k}^{\Prime}(\ell^\beta) = \delta^{\Prime}_E(\ell)$.
    \end{enumerate}
    Therefore, \eqref{KoblitzconstantAPoriginal} can be expressed as follows,
    \begin{equation}\label{KobAPproduct}
        C^{\Prime}_{E,n,k} = \frac{\delta_{E,n,k}^{\Prime}(L)}{\prod_{\ell \mid L}(1-1/\ell)} \cdot \prod_{\substack{\ell \nmid m_E \\ \ell^{\alpha} \parallel n}} \frac{\delta_{E,n,k}^{\Prime}(\ell^{\alpha})}{1-1/\ell} \cdot \prod_{\ell \nmid nm_E} \frac{\delta_E^{\Prime}(\ell)}{1-1/\ell}.
    \end{equation}
    and the infinite product converges absolutely.
\end{Proposition}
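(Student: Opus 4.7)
The proof will follow the structure of \propref{Koblitzeulerproduct}, with two governing observations added to handle the determinant-in-arithmetic-progression condition: (A) the Koblitz condition $\det(I - M) \in (\Z/m\Z)^\times$ depends only on the image of $M$ modulo $\rad(m)$, and (B) the congruence $\det M \equiv k \pmod{\gcd(m,n)}$ depends only on the image of $M$ modulo $\gcd(m,n)$. Both observations are immediate from the definition of $\Psi^{\Prime}_{n,k}$ in \eqref{Psiprimenk}.

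For item (1), a short prime-by-prime check using \eqref{E:Def-L} yields $\rad(L') = \rad(L)$ and $\gcd(L',n) = \gcd(L,n)$ whenever $L \mid L' \mid L^\infty$; the latter uses that each $\ell \mid \gcd(m_E, n)$ enters $L$ with exponent exactly $v_\ell(n)$, so raising to higher powers cannot enlarge the gcd. By (A) and (B), membership in $\Psi^{\Prime}_{n,k}(L')$ is then determined by the image modulo $L$. Since the reduction $\pi \colon G_E(L') \twoheadrightarrow G_E(L)$ is a surjective group homomorphism, its fibers are cosets of $\ker \pi$ and all have common cardinality $|\ker \pi|$. Consequently $|G_E(L') \cap \Psi^{\Prime}_{n,k}(L')| = |\ker \pi| \cdot |G_E(L) \cap \Psi^{\Prime}_{n,k}(L)|$ and $|G_E(L')| = |\ker \pi| \cdot |G_E(L)|$, so the ratios agree, proving (1).

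For items (2) and (3), the hypothesis $(\ell, L) = 1$ forces $(\ell, m_E) = 1$, because every prime of $m_E$ divides $L$ by construction. Writing $d = d_1 d_2$ with $d_1 \mid m_E^\infty$ and $(d_2, m_E) = 1$ and applying \lemref{propertyofmE} twice produces the CRT decomposition $G_E(d\ell^\alpha) \simeq G_E(d) \times \GL_2(\Z/\ell^\alpha \Z)$. Using $(\ell, d) = 1$ to factor $\gcd(d\ell^\alpha, n) = \gcd(d, n) \cdot \gcd(\ell^\alpha, n)$, both defining conditions of $\Psi^{\Prime}_{n,k}(d\ell^\alpha)$ split across the CRT factors, which yields (2). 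For (3), we have $G_E(\ell^\beta) = \GL_2(\Z/\ell^\beta\Z)$ and $\gcd(\ell^\beta, n) = \ell^\alpha$ for all $\beta \geq \alpha$, so membership in $\Psi^{\Prime}_{n,k}(\ell^\beta)$ is determined by the image modulo $\ell^\alpha$; the same fiber-counting argument as in (1) — now with the full kernel of $\GL_2(\Z/\ell^\beta\Z) \to \GL_2(\Z/\ell^\alpha\Z)$ sitting inside $G_E(\ell^\beta)$ — gives $\delta^{\Prime}_{E,n,k}(\ell^\beta) = \delta^{\Prime}_{E,n,k}(\ell^\alpha)$. When additionally $\ell \nmid n$, the determinant congruence is vacuous, so $\Psi^{\Prime}_{n,k}(\ell^\beta) = \Psi^{\Prime}(\ell^\beta)$ and the remaining claim reduces to \propref{Koblitzeulerproduct}.

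Finally, for any $m$ that is divisible by $L$ and whose valuation at each $\ell \mid n$ exceeds $v_\ell(n)$, iterating (2) and invoking (3) at each prime power factor outside $L$ yields
\[
\delta^{\Prime}_{E,n,k}(m) \;=\; \delta^{\Prime}_{E,n,k}(L) \cdot \prod_{\substack{\ell^\alpha \,\parallel\, n \\ \ell \,\nmid\, L}} \delta^{\Prime}_{E,n,k}(\ell^\alpha) \cdot \prod_{\substack{\ell \,\mid\, m \\ \ell \,\nmid\, nL}} \delta^{\Prime}_E(\ell).
\]
Dividing by $\prod_{\ell \mid m}(1 - 1/\ell)$ and letting $m \to \infty$ along divisibility gives \eqref{KobAPproduct}: the first two factors are constant or finite (the middle product is supported on the finitely many primes dividing $n$), and the tail converges absolutely since by \eqref{Koblitzpart} each local factor equals $1 + O(1/\ell^2)$. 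The main obstacle I anticipate is item (1): one must carefully verify the identities $\rad(L') = \rad(L)$ and $\gcd(L', n) = \gcd(L, n)$ directly from \eqref{E:Def-L}, as the exponent choice in the definition of $L$ is precisely what makes these hold and allows the Koblitz and arithmetic-progression conditions to be read off from level $L$.
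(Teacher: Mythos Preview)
Your proposal is correct and follows essentially the same approach as the paper's own proof: preimage/fiber counting under the reduction map for (1) and (3), the CRT splitting via \lemref{propertyofmE} for (2), and then assembling the Euler product with convergence from \eqref{Koblitzpart}. If anything you are slightly more explicit than the paper---you spell out the decomposition $d = d_1 d_2$ with $d_1 \mid m_E^\infty$ needed to invoke \lemref{propertyofmE} (the paper compresses this into a single line), and you isolate the two key identities $\rad(L') = \rad(L)$ and $\gcd(L',n) = \gcd(L,n)$ up front rather than using them implicitly.
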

\begin{proof}

    Let us prove item (1). Consider the natural reduction map $\varpi \colon G_E(L') \to G_E(L)$, which is a surjective group homomorphism. We will show that
    \begin{equation}\label{easycheck}
        \varpi^{-1}\left(G_E(L) \cap \Psi^{\Prime}_{n,k}(L)\right) = G_E(L') \cap \Psi^{\Prime}_{n,k}(L')
    \end{equation}

    Let $M \in G_E(L) \cap \Psi^{\Prime}_{n,k}(L)$ and $\widetilde{M} \in \varpi^{-1}(M)$. Recall that $\det(M-I)$ is invertible modulo $L$ and that $L'$ is only supported by the prime factors of $L$. Thus, $\det(\widetilde{M}-I)$ is invertible modulo $L'$. Since $\gcd(n,L) = \gcd(n,L')$, we also have $\det \widetilde{M} \equiv k \pmod {\gcd(n,L')}$. Thus, $\widetilde{M} \in G_E(L') \cap \Psi^{\Prime}_{n,k}(L')$. The other inclusion is obvious, and hence \eqref{easycheck} is obtained. Therefore, we have

    $$\delta_{E,n,k}^{\Prime}(L') = \frac{\left|G_E(L') \cap \Psi^{\Prime}_{n,k}(L')\right|}{|G_E(L')|} = \frac{\left|\varpi^{-1}\left(G_E(L) \cap \Psi^{\Prime}_{n,k}(L)\right)\right|}{|\varpi^{-1}(G_E(L))|} = \delta^{\Prime}_{E,n,k}(L).$$

 Let us prove item (2). By \lemref{propertyofmE}, we have an isomorphism,
    \begin{equation}\label{easyiso1}
        G_E(d\ell^\alpha) \simeq G_E(d) \times \GL_2(\Z/\ell^\alpha\Z).
    \end{equation}
It suffices to show that the isomorphism induces a map between the sets 
    \begin{equation}\label{easybij1}
        G_E(d\ell^\alpha) \cap\Psi_{n,k}^{\Prime}(d\ell^\alpha) \text{ and } \left(G_E(d) \cap \Psi^{\Prime}_{n,k}(d)\right) \times \left(\GL_2(\Z/\ell^\alpha\Z) \cap \Psi^{\Prime}_{n,k}(\ell^\alpha)\right).
    \end{equation}
    Say $M \in G_E(d\ell^\alpha) \cap \Psi^{\Prime}_{n,k}(d\ell^\alpha)$. By a similar argument to the proof of (1), one may see that $M_d \in G_E(d) \cap \Psi_{n,k}^{\Prime}(d)$ and $M_{\ell^\alpha} \in \GL_2(\Z/\ell^\alpha\Z) \cap \Psi_{n,k}^{\Prime}(\ell^\alpha)$.    
Now, let $M' \in G_E(d) \cap \Psi^{\Prime}(d)$ and $M'' \in \GL_2(\Z/\ell^\alpha\Z) \cap \Psi^{\Prime}(\ell^\alpha)$. Viewing $(M',M'') \in G_E(d) \times \GL_2(\Z/\ell^\alpha\Z)$, there exists a unique element $M \in G_E(d\ell^\alpha)$ with $M_d = M'$ and $M_{\ell^\alpha} = M''$ by \eqref{easyiso1}. Note that since $\det(M'-I)\in (\Z/d\Z)^{\times}$ and $\det(M''-I)\in (\Z/\ell^{\alpha}\Z)^{\times}$, we have $\det(M-I)\in (\Z/d\ell^{\alpha}\Z)^{\times}$; in particular, $M \in \Psi^{\Prime}_{n,k}(d\ell^\alpha)$. Therefore, \eqref{easybij1} is established.

Along with \eqref{easyiso1}, we obtain
\begin{align*}
        \delta_{E,n,k}^{\Prime}(d\ell^\alpha) &= \frac{\left|G_E(d\ell^\alpha)\cap \Psi^{\Prime}_{n,k}(d\ell^\alpha)\right|}{\left|G_E(d\ell^\alpha)\right|} \\
        &= \frac{|G_E(d) \cap \Psi^{\Prime}(d)|}{|G_E(d)|} \cdot \frac{|\GL_2(\Z/\ell^\alpha\Z) \cap \Psi^{\Prime}(\ell^\alpha)|}{|\GL_2(\Z/\ell^\alpha\Z)|} = \delta^{\Prime}_{E,n,k}(d) \cdot \delta^{\Prime}_{E,n,k}(\ell^\alpha).
    \end{align*}

    Finally, let us prove item (3).  Since $\ell \nmid m_E$, by \lemref{propertyofmE}, $G_E(\ell^\alpha)$ and $G_E(\ell^\beta)$ are the full groups, $\GL_2(\Z/\ell^\alpha\Z)$ and $\GL_2(\Z/\ell^\beta\Z)$. Let $\varpi \colon \GL_2(\Z/\ell^\beta\Z) \to \GL_2(\Z/\ell^\alpha\Z)$ be the natural reduction map which is a surjective group homomorphism. By a similar argument as in the proof of item (1), it suffices to check that 
    \begin{equation}\label{equivalence3}
        \varpi^{-1} \left( \Psi^{\Prime}_{n,k}(\ell^{\alpha})\right) = \Psi^{\Prime}_{n,k}(\ell^\beta).
    \end{equation}

    Take $M \in \Psi^{\Prime}_{n,k}(\ell^\alpha)$ and let $\widetilde{M} \in \varpi^{-1}(M)$. By the same reasoning in the proof of item (1), $\det(\widetilde{M}-I)$ is invertible modulo $\ell^\beta$. Since $\gcd(n,\ell^\alpha) = \gcd(n,\ell^\beta) = \ell^\alpha$, we also have $\det \widetilde{M} \equiv k \pmod {\ell^\alpha}$. The other inclusion is obvious, and hence \eqref{equivalence3} is obtained. Thus, we have 
    $$\delta_{E,n,k}^{\Prime}(\ell^\beta) = \frac{\left|\Psi^{\Prime}_{n,k}(\ell^\beta)\right|}{\left|\GL_2(\Z/\ell^\beta\Z)\right|} = \frac{\left|\varpi^{-1}\left(\Psi^{\Prime}_{n,k}(\ell^\alpha)\right) \right|}{\left| \varpi^{-1}\left(\GL_2(\Z/\ell^\alpha\Z)\right)\right|} = \delta_{E,n,k}^{\Prime}(\ell^\alpha).$$
    In case $\ell \nmid nL$, let $\varpi \colon \GL_2(\Z/\ell^\beta\Z) \to \GL_2(\Z/\ell\Z)$. It suffices to check
    \begin{equation}\label{lastcheck}
        \varpi^{-1}\left(\Psi^{\Prime}(\ell)\right) = \Psi^{\Prime}_{n,k}(\ell^\beta). 
    \end{equation}
    Note that the condition $\det M \equiv k \pmod {\gcd(n,\ell)}$ is trivial, and hence $\Psi^{\Prime}_{E,n,k}(\ell) = \Psi^{\Prime}_E(\ell)$. Let $M \in \Psi^{\Prime}_E(\ell)$. Note that every lifting $\widetilde{M} \in \varpi^{-1}(M)$ belongs to $\Psi^{\Prime}_{E,n,k}(\ell^\beta)$. The other inclusion is obvious, and hence \eqref{lastcheck} is obtained. Thus, we have
    $$\delta_{E,n,k}^{\Prime}(\ell^\beta) = \frac{\left|\Psi^{\Prime}_{n,k}(\ell^\beta)\right|}{|\GL_2(\Z/\ell^\beta\Z)|} = \frac{\left|\varpi^{-1}\left(\Psi^{\Prime}(\ell)\right)\right|}{\left|\varpi^{-1}(\GL_2(\Z/\ell\Z))\right|} = \delta_E^{\Prime}(\ell).$$
    By grouping the prime factors of $M$ in \eqref{KoblitzconstantAPoriginal} according to whether they divide $L$ or not, we obtain \eqref{KobAPproduct}. The absolute convergence of \eqref{KobAPproduct} follows from \remref{absconv}.
\end{proof}

\begin{Lemma}\label{deltaprimeellprime}
    Suppose $\ell^{\alpha} \parallel n$ and $\ell \nmid m_E$. Then
    \begin{equation*}
    \frac{\delta_{E,n,k}^{\Prime}(\ell^{\alpha})}{1-1/\ell} = \begin{cases}
        \frac{1}{\phi(\ell^{\alpha})} \left( 1 - \frac{\ell}{|\GL_2(\Z/\ell\Z)|}\right) & \text{ if } k \equiv 1 \pmod \ell, \\
        \frac{1}{\phi(\ell^{\alpha})}\left( 1 - \frac{\ell^2+\ell}{|\GL_2(\Z/\ell\Z)|}\right) & \text{ if } k \not \equiv 1 \pmod \ell.
    \end{cases}
\end{equation*}
\end{Lemma}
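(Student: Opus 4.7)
The plan is to reduce the computation to the counting result in Corollary~\ref{countingprime} and then simplify. Since $\ell \nmid m_E$, applying Lemma~\ref{propertyofmE} with $d_1 = 1$ and $d_2 = \ell^\alpha$ yields $G_E(\ell^\alpha) = \GL_2(\Z/\ell^\alpha\Z)$. Because $\ell^\alpha \parallel n$, we also have $\gcd(n,\ell^\alpha) = \ell^\alpha$, so the congruence condition on the determinant modulo $\gcd(n,\ell^\alpha)$ in the definition \eqref{Psiprimenk} of $\Psi^{\Prime}_{n,k}(\ell^\alpha)$ becomes $\det M \equiv k \pmod{\ell^\alpha}$. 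Consequently,
\[
\delta^{\Prime}_{E,n,k}(\ell^\alpha) = \frac{|\Psi^{\Prime}_{n,k}(\ell^\alpha)|}{|\GL_2(\Z/\ell^\alpha\Z)|}
= \frac{\#\{M \in \GL_2(\Z/\ell^\alpha\Z) : \det(M-I) \not\equiv 0 \pmod \ell, \ \det M \equiv k \pmod{\ell^\alpha}\}}{|\GL_2(\Z/\ell^\alpha\Z)|}.
\]

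Next, I would invoke Corollary~\ref{countingprime} to evaluate the numerator, and combine it with the standard identity $|\GL_2(\Z/\ell^\alpha\Z)| = \ell^{4(\alpha-1)} |\GL_2(\Z/\ell\Z)|$. In the case $k \equiv 1 \pmod \ell$, this gives
\[
\delta^{\Prime}_{E,n,k}(\ell^\alpha) = \frac{\ell^{3(\alpha-1)}(\ell^3-\ell^2-\ell)}{\ell^{4(\alpha-1)}|\GL_2(\Z/\ell\Z)|} = \frac{\ell(\ell^2-\ell-1)}{\ell^{\alpha-1}|\GL_2(\Z/\ell\Z)|/\ell},
\]
and similarly in the case $k \not\equiv 1 \pmod \ell$ the numerator becomes $\ell^{3(\alpha-1)}(\ell^3-\ell^2-2\ell)$, where we observe the useful factorization $\ell^2-\ell-2 = (\ell-2)(\ell+1)$.

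Finally, I would divide by $1 - 1/\ell = (\ell-1)/\ell$ and package the answer using $\phi(\ell^\alpha) = \ell^{\alpha-1}(\ell-1)$. The target form is recovered by noting that
\[
1 - \frac{\ell}{|\GL_2(\Z/\ell\Z)|} = 1 - \frac{1}{(\ell-1)^2(\ell+1)} = \frac{\ell(\ell^2-\ell-1)}{(\ell-1)^2(\ell+1)}
\]
in the first case, and
\[
1 - \frac{\ell^2+\ell}{|\GL_2(\Z/\ell\Z)|} = 1 - \frac{1}{(\ell-1)^2} = \frac{\ell(\ell-2)}{(\ell-1)^2}
\]
in the second case. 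Matching these with the simplified expressions for $\delta^{\Prime}_{E,n,k}(\ell^\alpha)/(1-1/\ell)$ yields the stated formulas. The whole argument is essentially bookkeeping; the only step that demands any care is ensuring the correct power of $\ell^{\alpha-1}$ emerges in the denominator so as to produce $\phi(\ell^\alpha)$, but this is a direct calculation rather than a genuine obstacle.
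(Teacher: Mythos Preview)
Your proposal is correct and follows essentially the same approach as the paper, which simply notes that $G_E(\ell^\alpha)\simeq\GL_2(\Z/\ell^\alpha\Z)$, invokes \corref{countingprime}, and says ``a brief calculation reveals the desired result.'' One minor slip: in your first displayed simplification the denominator $\ell^{\alpha-1}|\GL_2(\Z/\ell\Z)|/\ell$ should read $\ell^{\alpha-1}|\GL_2(\Z/\ell\Z)|$, but your subsequent verification via the identities for $1-\ell/|\GL_2(\Z/\ell\Z)|$ and $1-(\ell^2+\ell)/|\GL_2(\Z/\ell\Z)|$ is correct and completes the argument.
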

\begin{proof}
    By the assumption, we have $G_E(\ell^{\alpha}) \simeq \GL_2(\Z/\ell^{\alpha}\Z)$. Recall that
\begin{align*}
    \Psi^{\Prime}_{n,k}(\ell^{\alpha}) = \left\{M \in \GL_2(\Z/\ell^{\alpha}\Z) : \det(M-I) \in (\Z/\ell^{\alpha}\Z)^\times, \det M \equiv k \neghs \pmod {\gcd(n,\ell^{\alpha})} \right\}.
\end{align*}

whose cardinality was determined in \corref{countingprime}. A brief calculation reveals the desired result.
\end{proof}
Let $E/\Q$ be a non-CM elliptic curve of adelic level $m_E$. Let $n = n_1n_2$ where $n_1 = \gcd(n,m_E^\infty)$ and $(n_2,m_E) = 1$. By \eqref{Koblitzpart}, \eqref{KobAPproduct}, and Lemma \ref{deltaprimeellprime}, we have
\begin{equation*}
\begin{split}
        C^{\Prime}_{E,n,k} = \frac{\delta^{\Prime}_{E,n,k}(L)}{\prod_{\ell \mid L}(1-1/\ell)} &  \cdot \frac{1}{\phi(n_2)} \prod_{\substack{\ell \nmid m_E \\ \ell \mid n \\ \ell \nmid k-1}} \left( 1 - \frac{\ell^2+\ell}{|\GL_2(\Z/\ell\Z)|}\right) \prod_{\substack{\ell \nmid m_E \\ \ell \mid (n,k-1)}} \left( 1 - \frac{\ell}{|\GL_2(\Z/\ell\Z)|}\right) \\
    &\prod_{\ell \nmid nm_E}\left( 1 - \frac{\ell^2-\ell-1}{(\ell-1)^3(\ell+1)}\right).
\end{split}
\end{equation*}
We now propose the average counterpart of $C^{\Prime}_{E,n,k}$,
\begin{equation}\label{Cprimenk}
  C_{n,k}^{\Prime} \coloneqq \frac{1}{\phi(n)} \prod_{\substack{\ell \mid n \\ \ell \nmid k-1}} \left( 1 - \frac{\ell^2+\ell}{|\GL_2(\Z/\ell\Z)|}\right) \prod_{\ell \mid (n,k-1)} \left( 1 - \frac{\ell}{|\GL_2(\Z/\ell\Z)|}\right) \prod_{\ell \nmid n}\left( 1 - \frac{\ell^2-\ell-1}{(\ell-1)^3(\ell+1)}\right).
\end{equation}
The formula for $C_{n,k}^{\Prime}$ coincides with $C^{\Prime}_{E,n,k}$ if one takes $m_E = 1$, similar to the case for $C_{n,k}^{\Cyc}$ in \eqref{Ccycnk}. Parallel to Proposition \ref{Avgsumcyc},   we  show that $C_{n,k}^{\Prime}$ behaves as expected when we sum over $k$. 

\begin{Proposition} \label{AvgSum}
    For any positive integer $n$, we have
    $$\sum_{\substack{1 \leq k \leq n \\ (n,k) = 1}}C^{\Prime}_{n,k} = C^{\Prime},$$
    where $C^{\Prime}$ and $C^{\Prime}_{n,k}$ are defined in \eqref{Cprime} and \eqref{Cprimenk}.
\end{Proposition}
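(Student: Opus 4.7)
The plan is to mirror the proof of \propref{Avgsumcyc}. Since the Euler factor at primes $\ell \nmid n$ in $C^{\Prime}_{n,k}$ agrees with that in $C^{\Prime}$, after dividing by $\prod_{\ell \nmid n}\left(1 - \frac{\ell^2-\ell-1}{(\ell-1)^3(\ell+1)}\right)$ it suffices to show
\begin{equation*}
F(n) \coloneqq \frac{1}{\phi(n)}\sum_{\substack{1 \leq k \leq n \\ (k,n)=1}} \prod_{\substack{\ell \mid n \\ \ell \nmid k-1}} f_1(\ell) \prod_{\ell \mid (n,k-1)} f_2(\ell) = \prod_{\ell \mid n}\left(1 - \frac{\ell^2-\ell-1}{(\ell-1)^3(\ell+1)}\right),
\end{equation*}
where $f_1(\ell) \coloneqq 1 - \frac{\ell^2+\ell}{|\GL_2(\Z/\ell\Z)|}$ and $f_2(\ell) \coloneqq 1 - \frac{\ell}{|\GL_2(\Z/\ell\Z)|}$.

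First I would check the prime power case. For $n = p^a$, partition the $\phi(p^a)$ units mod $p^a$ into the $p^{a-1}$ classes with $k \equiv 1 \pmod p$ (which contribute $f_2(p)$) and the $p^{a-1}(p-2)$ classes with $k \not\equiv 1 \pmod p$ (which contribute $f_1(p)$). This yields
\begin{equation*}
F(p^a) = \frac{f_2(p) + (p-2)f_1(p)}{p-1}.
\end{equation*}
Substituting $|\GL_2(\Z/p\Z)| = p(p-1)^2(p+1)$ gives $f_1(p) = \frac{p(p-2)}{(p-1)^2}$ and $f_2(p) = \frac{p(p^2-p-1)}{(p-1)^2(p+1)}$. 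A short algebraic simplification then shows the numerator collapses to $p(p^3 - 2p^2 - p + 3) = (p-1)^3(p+1) - (p^2-p-1)$, giving exactly $1 - \frac{p^2-p-1}{(p-1)^3(p+1)}$.

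Next I would prove that $F$ is multiplicative. Given coprime $p^a$ and $n$, the Chinese remainder theorem identifies units mod $p^a n$ with pairs of units mod $p^a$ and mod $n$, and the divisibility conditions $\ell \mid (k-1)$ for $\ell \mid p^a n$ split accordingly across the two factors. Splitting the sum into the cases $k \equiv 1 \pmod p$ and $k \not\equiv 1 \pmod p$ as in the proof of \propref{Avgsumcyc} yields
\begin{equation*}
F(p^a n) = \frac{p^{a-1} f_2(p) + p^{a-1}(p-2) f_1(p)}{\phi(p^a)} \cdot F(n) = F(p^a) \cdot F(n).
\end{equation*}

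Combined with the prime power evaluation, multiplicativity gives $F(n) = \prod_{\ell \mid n}\left(1 - \frac{\ell^2-\ell-1}{(\ell-1)^3(\ell+1)}\right)$, completing the proof. The only step that requires care is the prime power computation, since the two Euler factors $f_1(p)$ and $f_2(p)$ arising from the two congruence cases are different (unlike the cyclicity setting, where they share a simpler structure) and must be combined into a single expression matching the Koblitz Euler factor; I expect this algebraic identity, not the multiplicativity, to be the main obstacle.
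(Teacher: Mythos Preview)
Your proposal is correct and follows essentially the same approach as the paper's proof: the same reduction to the identity for $F(n)$, the same auxiliary functions $f_1,f_2$, the same prime-power check via the partition into $k\equiv 1\pmod p$ versus $k\not\equiv 1\pmod p$, and the same multiplicativity argument via the Chinese remainder theorem. The algebraic verification you outline for the prime-power case is exactly what the paper does (though the paper leaves the final simplification as a one-line computation).
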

\begin{proof}
For notational convenience, we define
    $$f_1(\ell) = 1 - \frac{\ell^2+\ell}{|\GL_2(\Z/\ell\Z)|}, \hs f_2(\ell) = 1 - \frac{\ell}{|\GL_2(\Z/\ell\Z)|}$$
To show the desired equation, we need to verify that
\begin{equation}\label{verification}
    F(n) \coloneqq \frac{1}{\phi(n)} \sum_{\substack{1 \leq k \leq n \\ (k,n) = 1}} \prod_{\substack{\ell \mid n \\ k \not \equiv 1 (\ell)}} f_1(\ell) \prod_{\substack{\ell \mid n \\ k \equiv 1(\ell)}}f_2(\ell) = \prod_{\ell \mid n} \left( 1 - \frac{\ell^2-\ell-1}{(\ell-1)^3(\ell+1)}\right).
\end{equation}
First, we prove that \eqref{verification} is true for $n = p^a$, a prime power. Observe that
\begin{align*}
    F(p^a) &= \frac{1}{\phi(p^a)}\left(p^{a-1}(f_1(p)(p-2) + f_2(p))\right)\\
    &= \frac{p^{a-1}}{\phi(p^a)} \left[ (p-2)\left( 1 - \frac{p^2+p}{|\GL_2(\Z/p\Z)|}\right) + \left( 1 - \frac{p}{|\GL_2(\Z/p\Z)|}\right) \right] \\
    &= 1 - \frac{p^2-p-1}{(p-1)^3(p+1)}.
\end{align*}
Let us prove that $F$ is multiplicative. Let $n$ be coprime to $p^a$, a prime power. We see that
\begin{align*}
    F(p^an) &= \frac{1}{\phi(p^an)}\sum_{\substack{1 \leq k \leq p^a n \\ (k,pn) = 1}} \prod_{\substack{\ell \mid pn \\ k \not \equiv 1 (\ell)}}f_1(\ell) \prod_{\substack{\ell \mid pn \\ k \equiv 1 (\ell)}}f_2(\ell) \\
    &= \frac{1}{\phi(p^a)} \frac{1}{\phi(n)}\left[\sum_{\substack{1 \leq k \leq p^an \\ (k,pn) = 1 \\ k \not \equiv 1 (p)}}f_1(p) \prod_{\substack{\ell \mid n \\ k \not \equiv 1(\ell)}}f_1(\ell) \prod_{\substack{\ell \mid n \\ k \equiv 1 (\ell)}} f_2(\ell) + \sum_{\substack{1 \leq k \leq p^an \\ (k,pn) = 1 \\ k \equiv 1 (p)}} f_2(p) \prod_{\substack{\ell \mid n \\ k \not \equiv 1 (\ell)}} f_1(\ell) \prod_{\substack{\ell \mid n \\ k \equiv 1(\ell)}}f_2(\ell)\right] \\
    &= \frac{f_1(p) (p-2)p^{a-1}}{\phi(p^a)} \frac{1}{\phi(n)}\sum_{\substack{1 \leq k \leq n \\ (k,n) = 1}} \prod_{\substack{\ell \mid n \\ k \not \equiv 1 (\ell)}} f_1(\ell) \prod_{\substack{\ell \mid n \\ k \equiv 1 (\ell)}}f_2(\ell) \\
    &\hspace{1in} + \frac{f_2(p) p^{a-1}}{\phi(p^a)} \frac{1}{\phi(n)} \sum_{\substack{1 \leq k \leq n \\ (k,n) = 1}} \prod_{\substack{\ell \mid n \\ k \not \equiv 1 (\ell)}} f_1(\ell) \prod_{\substack{\ell \mid n \\ k \equiv 1 (\ell)}}f_2(\ell) \\
    &=\frac{1}{\phi(p^a)}\left(p^{a-1}(f_1(p)(p-2) + f_2(p))\right) \cdot  F(n) = F(p^a)F(n).
\end{align*}

This completes the proof.
\end{proof}

\subsection{Applying Zywina's approach for the cyclicity problem}\label{ZywArgCyc}
Zywina \cite{MR2805578} refined Koblitz's conjecture by developing a heuristic explanation for the constant $C^{\Prime}_E$. In essence, he interprets the desired property of a prime in terms of Galois representations, examines the ratio of elements meeting the property in each finite level $G_E(m)$, and considers the limit as $m$ approaches infinity. In this subsection, we employ Zywina's approach to determine the heuristic densities of primes of cyclic reduction for $E$ and verify their concurrence with the densities proposed by Serre and Akbal-G\"ulo\u{g}lu. In this subsection, we specifically concentrate on non-CM elliptic curves.

Let $E/\Q$ be an elliptic curve and fix a good prime $p \neq 2$. We now give a criterion for $p$ to be a prime of cyclic reduction for $E$.\footnote{Note that if $p = 2$ is a prime of good reduction for $E$, then $\widetilde{E}_2(\F_2)$ is necessarily cyclic.} Let $\Frob_p$ denote a Frobenius element in $\Gal(\overline{\Q}/\Q)$ at $p$. By \cite[Lemma 2.1]{MR2099195}, we have that
\begin{align*}
    \widetilde{E}_p(\F_p) \text{ is cyclic }
    & \iff \forall \text{ primes } \ell \neq p, \, \widetilde{E}_p(\F_p) \text{ does not contain a subgroup isomorphic to } \Z/\ell \Z \oplus \Z/\ell \Z \\
    & \iff \forall \text{ primes } \ell \neq p, \, \rho_{E,\ell}(\Frob_p) \not\equiv I \pmod{\ell} \\
    &\iff \forall m \in \N \text{ with } p \nmid m \text{ and } \forall \text{ prime } \ell \mid m, \, \rho_{E,\ell}(\Frob_p) \not\equiv I \pmod{\ell}.
\end{align*}
Drawing a parallel to \eqref{PsiPrime}, we consider the set
$$\Psi^{\Cyc}(m) \coloneqq \left\{M \in \GL_2(\Z/m\Z): M \not \equiv I \neghs \pmod \ell \text{ for all } \ell \mid m \right\},$$
and the ratio
$$\delta^{\Cyc}_E(m) \coloneqq \frac{|G_E(m) \cap \Psi^{\Cyc}(m)|}{|G_E(m)|}.$$ 
Taking the limit of $\delta^{\Cyc}_E(m)$ over all positive integers, ordered by divisibility, we expect to obtain the heuristic density of primes of cyclic reduction. 

\begin{Proposition}\label{cycheudeneulerproduct}
    Let $E/\Q$ be a non-CM elliptic curve of adelic level $m_E$. Then $\delta^{\Cyc}_{E}(\cdot)$, as an arithmetic function, satisfies the following properties:
    \begin{enumerate}
        \item for any positive integer $m$, $\delta^{\Cyc}_E(m) = \delta^{\Cyc}_E(\rad(m))$;
        \item for any prime $\ell \nmid m_E$ and integer $d$ coprime to $\ell$, $\delta^{\Cyc}_{E}(d\ell) = \delta^{\Cyc}_E(d) \cdot \delta^{\Cyc}_E(\ell)$.
    \end{enumerate}
    Therefore, the heuristic density of primes of cyclic reduction can be expressed as follows,
    \begin{equation*}
        \lim_{m \to \infty} \delta_E^{\Cyc}(m) = \delta_{E}^{\Cyc}(\rad(m_E)) \cdot \prod_{\ell \nmid m_E} \delta_E^{\Cyc}(\ell).
    \end{equation*}
\end{Proposition}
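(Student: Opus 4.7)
The plan is to mirror the proof of \propref{Koblitzeulerproduct} almost verbatim, replacing $\Psi^{\Prime}$ by $\Psi^{\Cyc}$ throughout. The key observation is structural: the defining condition for $\Psi^{\Cyc}(m)$, namely $M_\ell \not\equiv I \pmod{\ell}$ for every prime $\ell \mid m$, depends only on the reductions of $M$ modulo the prime divisors of $m$. In particular, it is insensitive to the prime-power exponents appearing in $m$. This suggests that both the ``reduction to $\rad(m)$'' property and the multiplicativity over coprime decompositions should follow from the same lifting arguments as in the Koblitz case.

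For part (1), I would let $r = \rad(m)$ and consider the surjective reduction map $\varpi \colon G_E(m) \to G_E(r)$. The goal is to establish the set equality
\[ \varpi^{-1}\bigl(G_E(r) \cap \Psi^{\Cyc}(r)\bigr) = G_E(m) \cap \Psi^{\Cyc}(m). \]
The forward containment is the essential point: if $M \in G_E(r) \cap \Psi^{\Cyc}(r)$ and $\widetilde{M} \in \varpi^{-1}(M)$, then for each prime $\ell$ dividing $m$ (equivalently, dividing $r$), the reduction of $\widetilde{M}$ modulo $\ell$ agrees with the reduction of $M$ modulo $\ell$, and the latter is not the identity. The reverse containment is immediate. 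Taking cardinalities and dividing by $|\varpi^{-1}(G_E(r))| = |G_E(m)|$ yields $\delta_E^{\Cyc}(m) = \delta_E^{\Cyc}(r)$.

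For part (2), I would apply \lemref{propertyofmE} to obtain the isomorphism
\[ G_E(d\ell) \simeq G_E(d) \times \GL_2(\Z/\ell\Z). \]
The task is then to verify that this isomorphism restricts to a bijection between $G_E(d\ell) \cap \Psi^{\Cyc}(d\ell)$ and $\bigl(G_E(d) \cap \Psi^{\Cyc}(d)\bigr) \times \bigl(\GL_2(\Z/\ell\Z) \cap \Psi^{\Cyc}(\ell)\bigr)$. Since the set of prime divisors of $d\ell$ is the disjoint union of those of $d$ and $\{\ell\}$, the defining ``not congruent to $I$'' conditions decompose cleanly according to the Chinese remainder theorem: $M \in \Psi^{\Cyc}(d\ell)$ if and only if $M_d \in \Psi^{\Cyc}(d)$ and $M_\ell \in \Psi^{\Cyc}(\ell)$. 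Dividing $|G_E(d\ell) \cap \Psi^{\Cyc}(d\ell)|$ by $|G_E(d\ell)|$ and invoking the product decomposition of both sides delivers $\delta_E^{\Cyc}(d\ell) = \delta_E^{\Cyc}(d) \cdot \delta_E^{\Cyc}(\ell)$.

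For the concluding Euler-type expression, I would combine the two parts. Any $m$ in the limit eventually becomes a multiple of $\rad(m_E)$; writing $\rad(m) = \rad(m_E) \cdot \prod_{\ell_i} \ell_i$ for distinct primes $\ell_i \nmid m_E$ and iterating part (2) (peeling off one prime $\ell_i$ at a time, which is legitimate since each $\ell_i$ is coprime to both $m_E$ and the accumulated factor $\rad(m_E) \prod_{j<i} \ell_j$), one obtains
\[ \delta_E^{\Cyc}(m) = \delta_E^{\Cyc}(\rad(m_E)) \cdot \prod_{\substack{\ell \mid \rad(m) \\ \ell \nmid m_E}} \delta_E^{\Cyc}(\ell). \]
Passing to the limit gives the stated formula. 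No delicate convergence step is required here, since $\delta_E^{\Cyc}(\ell) = 1 - 1/|\GL_2(\Z/\ell\Z)| = 1 - O(\ell^{-4})$, so the infinite product converges absolutely; I expect the only point demanding care to be the verification that the lifted ``non-identity mod $\ell$'' conditions truly decouple across primes, but this is essentially tautological from the definition of $\Psi^{\Cyc}$.
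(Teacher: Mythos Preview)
Your proposal is correct and matches the paper's approach exactly: the paper's proof simply reads ``Follows similarly to the proof of \propref{Koblitzeulerproduct},'' and what you have written is precisely that adaptation, with $\Psi^{\Prime}$ replaced by $\Psi^{\Cyc}$ and the same lifting/CRT arguments carried over.
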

\begin{proof}
    Follows similarly to the proof of \propref{Koblitzeulerproduct}.
\end{proof}
\begin{Remark}\label{absconscyc}
    One can easily check that for $\ell \nmid m_E$,
    \begin{equation}\label{cycnoAP}
        \delta_E^{\Cyc}(\ell) = 1 - \frac{1}{|\GL_2(\Z/\ell\Z)|} \sim 1 - \frac{1}{\ell^4}, \hs \text{ as } \ell \to \infty
    \end{equation}
    and hence the infinite product converges absolutely. 
\end{Remark}
    
We now verify that the limit $\lim_{m \to \infty} \delta_E^{\Cyc}(m)$ appearing in Proposition \ref{cycheudeneulerproduct} coincides with the cyclicity constant $C^{\Cyc}_E$ originally defined by Serre \cite[pp.\ 465-468]{MR3223094}.

\begin{Proposition}\label{breakingdelta}
    Let $E/\Q$ be a non-CM elliptic curve. Then we have
    $$C^{\Cyc}_E = \delta_E^{\Cyc}(\rad(m_E)) \cdot \prod_{\ell \nmid m_E} \left( 1 - \frac{1}{|\GL_2(\Z/\ell\Z)|}\right).$$
\end{Proposition}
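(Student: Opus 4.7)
The plan is to reduce the statement to Jones's almost Euler product formula \eqref{Ccycalmosteuler} for $C^{\Cyc}_E$ and then verify the remaining identity
\[
\delta_E^{\Cyc}(\rad(m_E)) = \sum_{d \mid m_E} \frac{\mu(d)}{[\Q(E[d]):\Q]}
\]
via Möbius inclusion--exclusion. Once this identity is established, substituting it into \eqref{Ccycalmosteuler} yields the proposition.

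First, set $r = \rad(m_E)$. Since $r$ is squarefree, $\Psi^{\Cyc}(r) = \{M \in \GL_2(\Z/r\Z) : M \not\equiv I \pmod{\ell} \text{ for every } \ell \mid r\}$. I would run inclusion--exclusion over the ``bad'' events $B_\ell = \{M \in G_E(r) : M \equiv I \pmod{\ell}\}$ for $\ell \mid r$. Because the condition $M \equiv I \pmod{d}$ (for squarefree $d$) is precisely the intersection $\bigcap_{\ell \mid d} B_\ell$, this gives
\[
|G_E(r) \cap \Psi^{\Cyc}(r)| = \sum_{d \mid r} \mu(d)\, \#\{M \in G_E(r) : M \equiv I \pmod{d}\}.
\]

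Next, for each $d \mid r$, I would invoke the surjectivity of the reduction map $G_E(r) \twoheadrightarrow G_E(d)$, which holds because $\rho_{E,d}$ factors as this reduction composed with $\rho_{E,r}$ and both representations are surjective onto their images by definition. Consequently, the set $\{M \in G_E(r) : M \equiv I \pmod{d}\}$ is the fiber over $I \in G_E(d)$ and has cardinality $|G_E(r)|/|G_E(d)|$. Dividing by $|G_E(r)|$ and using the standard identification $G_E(d) \cong \Gal(\Q(E[d])/\Q)$ (so that $|G_E(d)| = [\Q(E[d]):\Q]$) gives
\[
\delta_E^{\Cyc}(r) = \sum_{d \mid r} \frac{\mu(d)}{[\Q(E[d]):\Q]}.
\]
Since $\mu(d) = 0$ for non-squarefree $d$, the divisors of $m_E$ contributing to $\sum_{d \mid m_E}$ are exactly the divisors of $r$, so the right-hand side above equals $\sum_{d \mid m_E} \mu(d)/[\Q(E[d]):\Q]$. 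Plugging this into \eqref{Ccycalmosteuler} produces the proposition.

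The argument is essentially bookkeeping once the inclusion--exclusion identity is set up, so there is no serious obstacle; the only point requiring a moment's care is the surjectivity of $G_E(r) \twoheadrightarrow G_E(d)$ for $d \mid r$, which is immediate from the compatibility of the $\rho_{E,m}$ under reduction.
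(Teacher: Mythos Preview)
Your proposal is correct and follows essentially the same approach as the paper: both reduce to verifying $\delta_E^{\Cyc}(\rad(m_E)) = \sum_{d \mid m_E} \mu(d)/[\Q(E[d]):\Q]$ via inclusion--exclusion over the events $\{M \equiv I \pmod{\ell}\}$, use surjectivity of the reduction $G_E(r) \twoheadrightarrow G_E(d)$ to count fibers, and then invoke \eqref{Ccycalmosteuler}. The only cosmetic difference is that the paper introduces auxiliary notation $S'_E(m)$ and $S^{(d)}_E(m)$ for the relevant sets, whereas you work directly with the matrix conditions.
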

\begin{proof}
    Let $R = \rad(m_E)$. By \eqref{Ccycalmosteuler} and \eqref{cycnoAP}, it suffices to check
    $$ \sum_{d\mid m_E} \frac{\mu(d)}{[\Q(E[d]) : \Q]} = \delta^{\Cyc}_E(R).$$
    Let $m$ be a positive integer and $d \mid m$. We define
    \begin{align*}
        S'_E(m) &\coloneqq \{M \in G_E(m) : M \not \equiv I \neghs \pmod \ell \text{ for all } \ell \mid m \} \\
        S^{(d)}_E(m) & \coloneqq \{M \in G_E(m) : M \equiv I \neghs \pmod {d} \}.
    \end{align*}
    From the definition, one may observe that $G_E(R) \cap \Psi^{\Cyc}(R) = S'_E(R)$. Thus, we have
    $$\delta_E^{\Cyc}(R) = \frac{|S'_E(R)|}{|G_E(R)|}.$$
    Also, note that $S_E^{(d)}(d) = \{I\}$. Let $\varpi \colon  G_E(m) \to G_E(d)$ be the natural reduction map. Then, 
    $$\frac{|S^{(d)}_E(m)|}{|G_E(m)|} = \frac{\left|\varpi^{-1}(S^{(d)}_E(d))\right|}{\left|\varpi^{-1}(G_E(d))\right|} = \frac{\left|S_E^{(d)}(d)\right|}{\left|G_E(d)\right|} =  \frac{1}{|G_E(d)|}.$$
    Observe that $S'_E(R) = G_E(R) - \bigcup_{\ell \mid R} S^{(\ell)}_E(R)$. By the principle of inclusion-exclusion, we obtain
    $$\delta_E^{\Cyc}(R) = \frac{|S'_E(R)|}{|G_E(R)|} = \sum_{d \mid R} \frac{\mu(d)}{|G_E(d)|} = \sum_{d\mid m_E} \frac{\mu(d)}{[\Q(E[d]):\Q]}.$$
    This completes the proof.
\end{proof}
Now, let us construct a heuristic density of primes of cyclic reduction that lie in an arithmetic progression. Consider
$$\Psi^{\Cyc}_{n,k}(m) \coloneqq \left\{M \in \GL_2(\Z/m\Z) : M \not \equiv I \neghs \pmod \ell \text{ for all } \ell \mid m, \det M \equiv k \neghs \pmod {\gcd(m,n)} \right\}.$$
We define 
$$\delta_{E,n,k}^{\Cyc}(m) \coloneqq \frac{\left|G_E(m) \cap \Psi^{\Cyc}_{n,k}(m)\right|}{|G_E(m)|}.$$
Drawing parallels from Zywina's approach, we consider the limit
\begin{equation}\label{naiveheuden}
    \lim_{m \to \infty} \delta_{E,n,k}^{\Cyc}(m),
\end{equation}
where the limit is taken over all positive integers, ordered by divisibility. We'll prove in Proposition \ref{densitycheck} that \eqref{naiveheuden} coincides with $C_{E,n,k}^{\Cyc}$ as defined in \cite{MR4504664}. To do so, we'll first give some properties of $\delta_{E,n,k}^{\Cyc}(\cdot)$.

\begin{Proposition}\label{cycalmosteuler}
Let $E/\Q$ be a non-CM elliptic curve of adelic level $m_E$. Fix a positive integer $n$. Set $L$ as in \eqref{E:Def-L}. Then, $\delta_{E,n,k}^{\Cyc}(\cdot)$, as an arithmetic function, satisfies the following properties:
    \begin{enumerate}
        \item Let $L \mid L' \mid L^\infty$. Then, $\delta_{E,n,k}^{\Cyc}(L) = \delta_{E,n,k}^{\Cyc}(L')$;
        \item Let $\ell^\alpha$ be a prime power and $d$ be a positive integer with $(\ell, Ld) = 1$. Then, $\delta_{E,n,k}^{\Cyc}(d\ell^a) = \delta_{E,n,k}^{\Cyc}(d) \cdot \delta_{E,n,k}^{\Cyc}(\ell^\alpha)$.
        \item Let $\ell^\alpha \parallel n$ and $(\ell, L) = 1$. Then, for any $\beta \geq \alpha$, $\delta^{\Cyc}_{E,n,k}(\ell^\beta) = \delta^{\Cyc}_{E,n,k}(\ell^\alpha)$. Further, if $\ell \nmid nL$, we have $\delta_{E,n,k}^{\Cyc}(\ell^\beta) = \delta^{\Cyc}_E(\ell)$.
    \end{enumerate}
    Therefore, \eqref{naiveheuden} can be expressed as follows,
    \begin{equation}\label{cyclicheuden}
        \lim_{m \to \infty} \delta_{E,n,k}^{\Cyc}(m) = \delta^{\Cyc}_{E,n,k}(L) \cdot \prod_{\substack{\ell \nmid m_E \\ \ell^{\alpha} \parallel n}} \delta_{E,n,k}^{\Cyc}(\ell^\alpha) \cdot \prod_{\ell \nmid nm_E} \delta^{\Cyc}_{E}(\ell).
    \end{equation}
    and the product converges absolutely.
\end{Proposition}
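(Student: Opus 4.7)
The plan is to mimic the proof of Proposition \ref{KoblitzAPeulerproduct} essentially verbatim, with $\Psi^{\Prime}_{n,k}$ replaced by $\Psi^{\Cyc}_{n,k}$. The key observation is that, just like the condition $\det(I-M)\in (\Z/\ell\Z)^\times$ used in the Koblitz case, the cyclicity condition ``$M\not\equiv I\pmod{\ell}$ for every prime $\ell\mid m$'' depends only on the reductions modulo each prime $\ell\mid m$, and in particular is stable under lifting along any reduction map whose source and target have the same radical. The other ingredient, the congruence $\det M\equiv k\pmod{\gcd(m,n)}$, is handled identically in both proofs, and the only bookkeeping is verifying that the relevant greatest common divisors do not change.

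For item (1), given $L\mid L'\mid L^\infty$, I would take the natural surjection $\varpi\colon G_E(L')\to G_E(L)$ and verify that $\varpi^{-1}(G_E(L)\cap\Psi^{\Cyc}_{n,k}(L))=G_E(L')\cap\Psi^{\Cyc}_{n,k}(L')$. Since $L$ and $L'$ share the same set of prime divisors, the cyclicity condition is stable under $\varpi$. Moreover, from the definition of $L$ in \eqref{E:Def-L}, one checks that $\gcd(n,L)=\gcd(n,L')$ (as $v_\ell(L')\geq v_\ell(L)\geq v_\ell(n)$ for every $\ell\mid L\cap n$), so the determinant condition is also preserved, giving $\delta^{\Cyc}_{E,n,k}(L')=\delta^{\Cyc}_{E,n,k}(L)$.

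For item (2), since $(\ell,L)=1$ forces $\ell\nmid m_E$, Lemma \ref{propertyofmE} yields
\[
G_E(d\ell^\alpha)\simeq G_E(d)\times\GL_2(\Z/\ell^\alpha\Z).
\]
I would then check that under this isomorphism $G_E(d\ell^\alpha)\cap\Psi^{\Cyc}_{n,k}(d\ell^\alpha)$ corresponds to the product $(G_E(d)\cap\Psi^{\Cyc}_{n,k}(d))\times(\GL_2(\Z/\ell^\alpha\Z)\cap\Psi^{\Cyc}_{n,k}(\ell^\alpha))$. The condition ``$M\not\equiv I\pmod{\ell'}$ for $\ell'\mid d\ell^\alpha$'' splits according to whether $\ell'\mid d$ or $\ell'=\ell$, and the condition $\det M\equiv k\pmod{\gcd(n,d\ell^\alpha)}$ splits via the Chinese remainder theorem since $(d,\ell)=1$. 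The multiplicativity in item (2) then follows by dividing cardinalities. Item (3) is handled analogously: for $\ell^\alpha\parallel n$ with $\ell\nmid m_E$ and $\beta\geq\alpha$, consider the reduction map $\varpi\colon\GL_2(\Z/\ell^\beta\Z)\to\GL_2(\Z/\ell^\alpha\Z)$ and use $\gcd(n,\ell^\beta)=\ell^\alpha=\gcd(n,\ell^\alpha)$ to get $\varpi^{-1}(\Psi^{\Cyc}_{n,k}(\ell^\alpha))=\Psi^{\Cyc}_{n,k}(\ell^\beta)$. In the special case $\ell\nmid nL$, the determinant constraint is vacuous, so $\Psi^{\Cyc}_{n,k}(\ell^\beta)=\Psi^{\Cyc}(\ell^\beta)$, and Proposition \ref{cycheudeneulerproduct}(1) gives $\delta^{\Cyc}_{E}(\ell^\beta)=\delta^{\Cyc}_E(\ell)$.

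Finally, the Euler-type expansion \eqref{cyclicheuden} is obtained by writing any $m$ in the inverse limit as $m=L'\cdot\prod_\ell\ell^{\beta_\ell}$ with $L\mid L'\mid L^\infty$ and $\ell\nmid L$, and applying items (1)--(3) to peel off the three types of factors: the finite factor $\delta^{\Cyc}_{E,n,k}(L)$, the factors $\delta^{\Cyc}_{E,n,k}(\ell^\alpha)$ for $\ell^\alpha\parallel n$ with $\ell\nmid m_E$, and the factors $\delta^{\Cyc}_E(\ell)$ for $\ell\nmid nm_E$. Absolute convergence of the resulting product is immediate from Remark \ref{absconscyc}. There is no real obstacle here; the only point that demands care is verifying that $\gcd(n,L)=\gcd(n,L')$ in item (1) and that $\gcd(n,\ell^\beta)=\gcd(n,\ell^\alpha)$ in item (3), since both underlie the stability of the determinant condition under lifting.
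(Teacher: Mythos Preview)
Your proposal is correct and follows exactly the approach taken in the paper, which simply states that one argues as in the proof of Proposition \ref{KoblitzAPeulerproduct} and invokes Remark \ref{absconscyc} for absolute convergence. In fact, you have spelled out more of the details (e.g., verifying $\gcd(n,L)=\gcd(n,L')$ and $\gcd(n,\ell^\beta)=\gcd(n,\ell^\alpha)$) than the paper does.
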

\begin{proof}
    One can argue similarly to the proof of \propref{KoblitzAPeulerproduct} to obtain the desired results. The absolute convergence of \eqref{cyclicheuden} follows from \remref{absconscyc}.
\end{proof}
\begin{Lemma}\label{Sizeofdeltacyc}
    Let $E/\Q$ be a non-CM elliptic curve of adelic level $m_E$. Suppose $\ell^a \parallel n$ and $\ell \nmid m_E$. For any $k$ coprime to $n$, we have
    $$\delta^{\Cyc}_{E,n,k}(\ell^a) = \begin{cases}
        \frac{1}{\phi(\ell^a)} & \text{ if } \ell \mid n \text{ and } \ell \nmid (k-1) \\
        \frac{1}{\phi(\ell^a)}\left ( 1 - \frac{\phi(\ell)}{|\GL_2(\Z/\ell\Z)|}\right) & \text{ if } \ell \mid (n,k-1).
    \end{cases}$$
\end{Lemma}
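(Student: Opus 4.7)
The plan is to reduce the computation to a direct application of Corollary \ref{countingcyc}, exploiting the simplification afforded by $\ell \nmid m_E$ and $\ell^a \parallel n$.

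First, since $\ell \nmid m_E$, Lemma \ref{propertyofmE} (applied with $d_1 = 1$ and $d_2 = \ell^a$) gives $G_E(\ell^a) = \GL_2(\Z/\ell^a\Z)$, so
\[ \delta^{\Cyc}_{E,n,k}(\ell^a) = \frac{|\Psi^{\Cyc}_{n,k}(\ell^a)|}{|\GL_2(\Z/\ell^a\Z)|}. \]
Next, because $\ell^a \parallel n$, we have $\gcd(\ell^a,n) = \ell^a$, so the definition of $\Psi^{\Cyc}_{n,k}$ unwinds to
\[ \Psi^{\Cyc}_{n,k}(\ell^a) = \{M \in \GL_2(\Z/\ell^a\Z) : M \not\equiv I \bmod \ell, \ \det M \equiv k \bmod \ell^a\}. \]

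The cardinality of this set is exactly what Corollary \ref{countingcyc} provides, splitting on whether $k \equiv 1 \pmod \ell$ or not. Combined with the standard formula $|\GL_2(\Z/\ell^a\Z)| = \ell^{4(a-1)}(\ell^2-1)(\ell^2-\ell)$, we get
\[ \delta^{\Cyc}_{E,n,k}(\ell^a) = \frac{\ell^{3(a-1)}(\ell^3-\ell)}{\ell^{4(a-1)}(\ell^2-1)(\ell^2-\ell)} \quad \text{or} \quad \frac{\ell^{3(a-1)}(\ell^3-\ell-1)}{\ell^{4(a-1)}(\ell^2-1)(\ell^2-\ell)} \]
according to whether $\ell \nmid (k-1)$ or $\ell \mid (k-1)$.

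It remains to simplify. In the first case, $\ell^3 - \ell = \ell(\ell^2-1)$ cancels with factors in the denominator to yield $\frac{1}{\ell^{a-1}(\ell-1)} = \frac{1}{\phi(\ell^a)}$. In the second case, factoring $(\ell^2-1)(\ell^2-\ell) = \ell(\ell-1) \cdot (\ell^2-1) = \phi(\ell^a) \cdot \ell^{2-a}(\ell-1) \cdot \ldots$ and regrouping shows that the expression equals
\[ \frac{1}{\phi(\ell^a)} \cdot \frac{\ell^3-\ell-1}{\ell(\ell^2-1)} = \frac{1}{\phi(\ell^a)}\left(1 - \frac{1}{\ell(\ell^2-1)}\right) = \frac{1}{\phi(\ell^a)}\left(1 - \frac{\ell-1}{(\ell^2-1)(\ell^2-\ell)}\right), \]
which is the claimed form since $\phi(\ell) = \ell-1$ and $|\GL_2(\Z/\ell\Z)| = (\ell^2-1)(\ell^2-\ell)$. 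There is no real obstacle here; the only point requiring slight care is the arithmetic manipulation in the second case to recognize the factor $1 - \phi(\ell)/|\GL_2(\Z/\ell\Z)|$.
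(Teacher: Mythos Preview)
Your proof is correct and follows exactly the same approach as the paper: use $\ell \nmid m_E$ to get $G_E(\ell^a) = \GL_2(\Z/\ell^a\Z)$, then apply Corollary~\ref{countingcyc} and simplify. The paper's proof is terser (it omits the arithmetic simplification you spell out), and your intermediate line ``$\phi(\ell^a) \cdot \ell^{2-a}(\ell-1) \cdot \ldots$'' is a bit garbled, but the subsequent displayed identity is correct and completes the argument.
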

\begin{proof}
    Since $\ell \nmid m_E$, we have $G_E(\ell^a) \simeq \GL_2(\Z/\ell^a\Z)$, and hence $|G_E(\ell^a)| = (\ell^2-1)(\ell^2-\ell)\ell^{4(a-1)}$. Applying \corref{countingcyc}, we obtain the desired results.
\end{proof}
Let $E/\Q$ be a non-CM elliptic curve of adelic level $m_E$. Let $n = n_1n_2$ where $n_1 = \gcd(n,m_E^\infty)$ and $(n_2,m_E) = 1$. By \eqref{cycnoAP}, \eqref{cyclicheuden}, and Lemma \ref{Sizeofdeltacyc}, we obtain
\begin{equation}\label{refinedcycheuden}
    \lim_{m\to \infty} \delta^{\Cyc}_{E,n,k}(m) = \frac{\delta_{E,n,k}^{\Cyc}(L)}{\phi(n_2)} \prod_{\substack{\ell \nmid m_E \\ \ell \mid (n,k-1)}}\left( 1- \frac{\phi(\ell)}{|\GL_2(\Z/\ell\Z)|}\right) \prod_{\ell \nmid nm_E} \left( 1 - \frac{1}{|\GL_2(\Z/\ell\Z)|}\right).
\end{equation}
We now prove that \eqref{refinedcycheuden} equals the cyclicity constant proposed by Akbal and G\"ul\u{g}lu.

\begin{Proposition} \label{densitycheck}
    Let $E/\Q$ be a non-CM elliptic curve of adelic level $m_E$ and $n$ be a positive integer. Let $n = n_1n_2$ where $n_1 = \gcd(n,m_E^\infty)$ and $(n_2,m_E) = 1$. Then we have 
    \begin{equation*}
        C^{\Cyc}_{E,n,k} = \frac{\delta_{E,n,k}^{\Cyc}(L)}{\phi(n_2)} \prod_{\substack{\ell \nmid m_E \\ \ell \mid (n,k-1)}}\left( 1- \frac{\phi(\ell)}{|\GL_2(\Z/\ell\Z)|}\right) \prod_{\ell \nmid nm_E} \left( 1 - \frac{1}{|\GL_2(\Z/\ell\Z)|}\right).
    \end{equation*}
\end{Proposition}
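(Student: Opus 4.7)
The plan is to show $C^{\Cyc}_{E,n,k} = \lim_{m \to \infty} \delta^{\Cyc}_{E,n,k}(m)$, from which the proposition follows immediately since the right-hand side is exactly this limit by equation \eqref{refinedcycheuden} (obtained from \propref{cycalmosteuler} and \lemref{Sizeofdeltacyc}). The idea is to apply inclusion-exclusion to $\delta^{\Cyc}_{E,n,k}(m)$, reinterpret each resulting term via Galois theory, and recognize the outcome as a partial sum of the defining series \eqref{CcycEnk-def} for $C^{\Cyc}_{E,n,k}$.

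Concretely, I would fix $m$ divisible by $n m_E$ (so that $\gcd(m,n)=n$) and apply inclusion-exclusion over the primes $\ell \mid m$ to obtain
$$\left|G_E(m) \cap \Psi^{\Cyc}_{n,k}(m)\right| = \sum_{d \mid \rad(m)} \mu(d) \cdot N(d,m),$$
where $N(d,m) \coloneqq \#\{M \in G_E(m) : M \equiv I \pmod d,\ \det M \equiv k \pmod n\}$. Identifying $G_E(m)$ with $\Gal(\Q(E[m])/\Q)$ via $\rho_{E,m}$, the condition $M \equiv I \pmod d$ cuts out the subgroup $\Gal(\Q(E[m])/\Q(E[d]))$, while $\det M \equiv k \pmod n$ selects the coset restricting to $\sigma_k \in \Gal(\Q(\zeta_n)/\Q)$. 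Standard Galois theory shows these two conditions are simultaneously satisfiable precisely when $\sigma_k$ fixes $\Q(E[d]) \cap \Q(\zeta_n)$ pointwise, i.e., when $\gamma_{n,k}(\Q(E[d])) = 1$; in that case the common fiber has cardinality $|G_E(m)|/[\Q(E[d])\Q(\zeta_n):\Q]$, via the tower identity $[\Q(E[d])\Q(\zeta_n):\Q] = [\Q(E[d]):\Q]\cdot [\Q(\zeta_n):\Q(E[d])\cap\Q(\zeta_n)]$. Dividing by $|G_E(m)|$ yields
$$\delta^{\Cyc}_{E,n,k}(m) = \sum_{d \mid \rad(m)} \frac{\mu(d)\gamma_{n,k}(\Q(E[d]))}{[\Q(E[d])\Q(\zeta_n):\Q]}.$$

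To finish, I would use that $[\Q(E[d])\Q(\zeta_n):\Q]$ grows like $d^4$ for $d$ coprime to $nm_E$, so the series \eqref{CcycEnk-def} defining $C^{\Cyc}_{E,n,k}$ converges absolutely; letting $m \to \infty$ through multiples of $nm_E$ with $\rad(m)$ exhausting the primes, the partial sums on the right (already supported on squarefree $d$ by $\mu$) converge to the full series, so $\lim_{m\to\infty}\delta^{\Cyc}_{E,n,k}(m) = C^{\Cyc}_{E,n,k}$, and combining with \eqref{refinedcycheuden} proves the proposition. The main obstacle is the Galois-theoretic computation of $N(d,m)$: one must verify carefully that compatibility on $\Q(E[d])\cap\Q(\zeta_n)$ (captured precisely by $\gamma_{n,k}$) is both necessary and sufficient for the fiber to be nonempty, and that its size is then uniformly $|G_E(m)|/[\Q(E[d])\Q(\zeta_n):\Q]$ regardless of any entanglements between $\Q(E[d])$ and $\Q(\zeta_n)$.
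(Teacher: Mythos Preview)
Your approach is correct and takes a genuinely different route from the paper. The paper does not re-derive the connection to the defining series \eqref{CcycEnk-def}; instead it imports the almost-Euler product \eqref{CCycEnk} (and its rewriting \eqref{goodidea}) as a black box from \cite{Jones-Lee}, and then shows that the non-Euler factor $|S'_{E,n,k}(R)|/|\Gal(\Q(E[R])\Q(\zeta_n)/\Q)|$ coincides with $\delta^{\Cyc}_{E,n,k}(L)/\phi(n_2)$ via a Galois-theoretic identification carried out once, at the fixed level $L$. You instead apply inclusion--exclusion at every level $m$ with $nm_E \mid m$, identify $\delta^{\Cyc}_{E,n,k}(m)$ with a partial sum of the series \eqref{CcycEnk-def}, and pass to the limit. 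Your argument is more self-contained (it bypasses the citation to \cite{Jones-Lee} entirely and in effect re-proves \eqref{CCycEnk}), while the paper's is shorter precisely because the inclusion--exclusion and the verification that the terms match $\mu(d)\gamma_{n,k}(\Q(E[d]))/[\Q(E[d])\Q(\zeta_n):\Q]$ have already been done elsewhere.

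The ``main obstacle'' you flag is not actually an obstacle: since $d \mid m$ and $n \mid m$, both $\Q(E[d])$ and $\Q(\zeta_n)$ sit inside $\Q(E[m])$, so restriction gives a surjection $\Gal(\Q(E[m])/\Q) \twoheadrightarrow \Gal(\Q(E[d])\Q(\zeta_n)/\Q)$, and the fiber over the unique element of $\Gal(\Q(E[d])\Q(\zeta_n)/\Q)$ that is trivial on $\Q(E[d])$ and equals $\sigma_k$ on $\Q(\zeta_n)$ (when it exists) is automatically a coset of $\Gal(\Q(E[m])/\Q(E[d])\Q(\zeta_n))$, of size $|G_E(m)|/[\Q(E[d])\Q(\zeta_n):\Q]$; entanglements between $\Q(E[d])$ and $\Q(\zeta_n)$ are already absorbed into the degree $[\Q(E[d])\Q(\zeta_n):\Q]$ and into $\gamma_{n,k}$.
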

\begin{proof}
    Define
    $$S'_{E,n,k}(m) \coloneqq \{ \sigma \in \Gal(\Q(E[m])\Q(\zeta_n)/\Q) : \sigma|_{\Q(\zeta_n)} = \sigma_k, \sigma|_{\Q(E[\ell])} \not \equiv 1 \text{ for all } \ell \mid m\}.$$
    Let $R = \rad(m_E)$. By \cite[p.\ 13]{Jones-Lee}, \eqref{CCycEnk} can be expressed as follows,
    \begin{equation}\label{goodidea}
        \frac{|S'_{E,n,k}(R)|}{|\Gal(\Q(E[R]) \Q(\zeta_n)/\Q)|} \prod_{\substack{\ell \nmid m_E \\ \ell (n,k-1)}} \left(1-\frac{\phi(\ell)}{|\GL_2(\Z/\ell\Z)|}\right) \prod_{\ell \nmid nm_E} \left(1-\frac{1}{|\GL_2(\Z/\ell\Z)|}\right).
    \end{equation}
    As usual, let $L$ be as in \eqref{E:Def-L}; in particular, $n_1$ and $R$ divide $L$. It suffices to verify that
    $$\frac{|S'_{E,n,k}(R)|}{|\Gal(\Q(E[R])\Q(\zeta_n))/\Q|} = \frac{\delta_{E,n,k}^{\Cyc}(L)}{\phi(n_2)}.$$
    By the Weil pairing, we have $\Q(\zeta_{n_2}) \subseteq \Q(E[n_2])$. Thus, we see that $\Q(E[R])\Q(\zeta_{n_1})$ and $\Q(\zeta_{n_2})$ must be linearly disjoint by \lemref{propertyofmE}, and hence
    $$\Gal(\Q(E[R])\Q(\zeta_n)/\Q) \simeq \Gal(\Q(E[R]\Q(\zeta_{n_1}))/\Q) \times \Gal(\Q(\zeta_{n_2})/\Q).$$
    Under the isomorphism, the set $S'_{E,n,k}(R)$ can be identified as $S'_{E,n_1,k}(R) \times \left\{\sigma_k\right\}$, and hence $|S'_{E,n,k}(R)| = |S'_{E,n_1,k}(R)|$. Thus, we have
    $$\frac{|S'_{E,n,k}(R)|}{|\Gal(\Q(E[R])\Q(\zeta_n)/\Q)|} = \frac{1}{\phi(n_2)} \cdot \frac{|S'_{E,n_1,k}(R)|}{|\Gal(\Q(E[R])\Q(\zeta_{n_1})/\Q)|}.$$
    Remark that $\Q(E[R]) \subseteq \Q(E[L])$ and $\Q(\zeta_{n_1}) \subseteq \Q(E[L])$ by the definition of $L$. Thus, the usual restriction $\varpi \colon G_E(L) \to \Gal(\Q(E[R])\Q(\zeta_{n_1})/\Q)$ gives a surjective group homomorphism.
    
    Viewing $G_E(L)$ as a subgroup of $\GL_2(\Z/L\Z)$, we may observe that
    \begin{align*}
        \varpi^{-1}(S'_{E,n_1,k}(R)) &= \left\{\widetilde{\sigma} \in G_E(L) : \widetilde{\sigma}|_{\Q(\zeta_{n_1})} = \sigma_{k}, \widetilde{\sigma}|_{\Q(E[\ell])} \not \equiv 1 \neghs \pmod {\ell} \text{ for all } \ell \mid L \right\} \\
        &= G_E(L) \cap \left\{M \in \GL_2(\Z/L\Z) : \det M \equiv k \neghs \pmod {n_1}, M \not \equiv I \neghs \pmod {\ell} \text{ for all } \ell \mid L \right\} \\
        &= G_E(L) \cap \Psi^{\Cyc}_{n,k}(L).
    \end{align*}
    Therefore,
    $$\frac{|S'_{E,n_1,k}(R)|}{|\Gal(\Q(E[R])\Q(\zeta_{n_1})/\Q)|} = \frac{\left|\varpi^{-1}\left(S'_{E,n_1,k}(R)\right)\right|}{\left|\varpi^{-1}(\Gal(\Q(E[R])\Q(\zeta_{n_1})/\Q))\right|} = \frac{\left| G_E(L)\cap \Psi^{\Cyc}_{n,k}(L)\right|}{\left|G_E(L)\right|} = \delta^{\Cyc}_{E,n,k}(L).$$
    This completes the proof.
\end{proof}

\begin{Remark}
    As one may have observed from Conjecture \ref{Cyclicity} and Conjecture \ref{Koblitz}, the conjectural growth rates of $\pi_E^{\Cyc}(x)$ and $\pi_E^{\Prime}(x)$ are different. Thus, there is an intrinsic difference between $C^{\Cyc}_E$ and $C^{\Prime}_E$. In particular, $C^{\Cyc}_E$ can be interpreted as the (conjectural) density of primes of cyclic reduction for $E$ whereas $C^{\Prime}_E$ should not be interpreted analogously. A similar remark holds for $\pi_E^{\Cyc}(x;n,k)$ and $\pi_E^{\Prime}(x;n,k)$ and their respective constants.
\end{Remark}

\section{On the cyclicity and Koblitz constants for Serre curves} \label{CycKobConSeCur}
We begin by fixing some notation that will hold throughout the section. Let $E/\Q$ be a Serre curve of discriminant $\Delta_E$, $n$ be a positive integer, and $k$ be an integer coprime to $n$. Let $\Delta'$ be the squarefree part of $\Delta_E$. By \propref{mEofSerreCurves}, we have 
\begin{equation*}
    m_E = \begin{cases}
    2|\Delta'| & \text{ if } \Delta' \equiv 1 \pmod 4, \\
    4|\Delta'| & \text{ otherwise}.
\end{cases}
\end{equation*}
Let $L$ be defined as in \eqref{E:Def-L}. The goal of this section is to develop formulas for $C_{E,n,k}^{\Cyc}$ and $C_{E,n,k}^{\Prime}$ with our assumption that $E$ is a Serre curve. By Proposition \ref{KoblitzAPeulerproduct} and Proposition \ref{cycalmosteuler}, it suffices to compute $\delta_{E,n,k}^{\Cyc}(L)$ and $\delta_{E,n,k}^{\Prime}(L)$.

For an integer $n$, we set $n = n_1n_2$ where $n_1 = (n,m_E^\infty)$ and $(n_2,m_E) = 1$. There are two cases to consider: $m_E \nmid L$ and $m_E \mid L$.  The former happens if and only if one of the following holds:
\begin{itemize}
    \item $\Delta' \equiv 3 \pmod 4$ and $2 \nmid n$;
    \item $\Delta' \equiv 2 \pmod 4$ and $4 \nmid n$.
\end{itemize}
We write $L = 2^{\alpha} \cdot L^{\odd}$ where $L^{\odd}$ is an odd integer; observe that $|\Delta'|$ divides $L^{\odd}$. We now define two sign functions that depend on $\Delta', k$ and appear in Theorem \ref{serre-constants}.
\begin{Definition}\label{Definitionoftau}
   Assume $m_E\mid L$. We define $\tau = \tau(\Delta',k)$ as follows.
\begin{itemize}
    \item If $\Delta' \equiv 1 \pmod 4$, we define $\tau = -1$, regardless of the choice of $k$.
    \item If $\Delta' \equiv 3 \pmod 4$, then $4 \mid n$. We define
    $$\tau = \begin{cases}
        -1 & \text{ if } k \equiv 1 \pmod 4,\\
        1 & \text{ if } k \equiv 3 \pmod 4.
    \end{cases}$$
    \item If $\Delta' \equiv 2 \pmod 8$, then $8 \mid n$. We define
    $$\tau = \begin{cases}
        -1 & \text{ if } k \equiv 1,7 \pmod 8, \\
        1 & \text{ if } k \equiv 3 ,5 \pmod 8.
    \end{cases}$$
    \item If $\Delta' \equiv 6 \pmod 8$, then $8 \mid n$. We define
    $$\tau = \begin{cases}
        -1 & \text{ if } k \equiv 1, 3\pmod 8, \\
        1 & \text{ if } k \equiv 5,7 \pmod 8.
    \end{cases}$$
\end{itemize}
 Finally, we define $\tau^{\mathcal{X}} \coloneqq \tau^{\mathcal{X}}(\Delta',n,k) \in \{\pm 1\}$ as follows,
\begin{align*}
    \tau^{\Cyc} &\coloneqq \tau \prod_{\substack{\ell \mid L^{\odd} \\ \ell \nmid n}} (-1) \prod_{\substack{\ell \mid (n,L^{\odd}) \\ \ell \nmid k-1}}\left(\frac{k}{\ell}\right), \\ 
    \tau^{\Prime} &\coloneqq -\tau \prod_{\substack{\ell \mid (n,L^{\odd}) \\ \ell \nmid k-1}}\left(\frac{k}{\ell}\right).
\end{align*}
\end{Definition}
Having defined $\tau^{\Cyc}$ and $\tau^{\Prime}$, the rest of the section is devoted to proving \thmref{serre-constants}. First, suppose $m_E \nmid L$. Then, by \propref{mEofSerreCurves}, we have $G_E(L) \simeq \GL_2(\Z/L\Z) \simeq \prod_{\ell^{\alpha} \parallel L} \GL_2(\Z/\ell^{\alpha}\Z)$. One can check that the isomorphism induces bijections between the sets,
\begin{align}
\Psi^{\mathcal{X}}_{n,k}(L) & \; \text{ and } \prod_{\ell^{\alpha}\parallel L} \Psi^{\mathcal{X}}_{n,k}(\ell^{\alpha}), \label{bijection-1}
\end{align}
for $\mathcal{X} \in \{\Cyc,\Prime\}$. For $\ell \mid L$, either we have $\ell \nmid n$ or $\ell \mid n$.
Recall from \eqref{E:Def-L} that $\ell \nmid n \implies \alpha = 1$. The condition $\det M \equiv k \pmod {\gcd(n,\ell)}$ becomes trivial, and hence we have
$$ \Psi^{\mathcal{X}}_{n,k}(\ell^\alpha) = \Psi^{\mathcal{X}}(\ell)$$
for $\mathcal{X} \in \{\Cyc,\Prime\}$. Suppose $\ell^\alpha \parallel n$. We have already determined the size of $\Psi^{\mathcal{X}}_{n,k}(\ell^\alpha)$ in \corref{countingcyc} and \corref{countingprime}. Based on those counts, we obtain the following.
\begin{Lemma}\label{psi-L}
We have 
\begin{enumerate}
\item \label{psi-L-cyc} $\displaystyle \Psi_{n, k}^{\Cyc}(L)=\prod_{\substack{\ell\mid L\\\ell \nmid n}}\left((\ell^2-1)(\ell^2-\ell)-1 \right) \prod_{\substack{\ell^{\alpha}\parallel (L, n)\\ \ell\mid k-1}}\left(\ell^{3(\alpha-1)}(\ell^3-\ell-1) \right) \prod_{\substack{\ell^{\alpha}\parallel (L, n)\\ \ell\nmid k-1}}\left(\ell^{3(\alpha-1)}(\ell^3-\ell) \right)$.
\item \label{psi-L-prime} $\displaystyle \Psi_{n, k}^{\Prime}(L)=\prod_{\substack{\ell\mid L\\\ell \nmid n}}\left(\ell (\ell^3-2\ell^2-\ell+3)\right) \prod_{\substack{\ell^{\alpha}\parallel (L, n)\\ \ell\mid k-1}}\left(\ell^{3(\alpha-1)}(\ell^3-\ell^2-\ell) \right)\prod_{\substack{\ell^{\alpha}\parallel (L, n)\\ \ell\nmid k-1}}\left(\ell^{3(\alpha-1)}(\ell^3-\ell^2-2\ell) \right)$.
\end{enumerate}
\end{Lemma}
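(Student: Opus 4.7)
My plan is to combine the bijection (\ref{bijection-1}) with the counting results established in Section \ref{CountMat}. Since
\[
|\Psi_{n,k}^{\mathcal{X}}(L)| = \prod_{\ell^{\alpha} \parallel L} |\Psi_{n,k}^{\mathcal{X}}(\ell^{\alpha})|,
\]
the problem reduces to computing $|\Psi_{n,k}^{\mathcal{X}}(\ell^{\alpha})|$ for each prime power $\ell^{\alpha} \parallel L$. I would split the index set according to whether $\ell \mid n$ or not, recalling that the definition (\ref{E:Def-L}) forces $\alpha = 1$ whenever $\ell \nmid n$.

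For primes $\ell \mid L$ with $\ell \nmid n$, the congruence $\det M \equiv k \pmod{\gcd(n,\ell)}$ is vacuous, so $\Psi_{n,k}^{\mathcal{X}}(\ell) = \Psi^{\mathcal{X}}(\ell)$. In the cyclicity case, this set is the complement of $\{I\}$ in $\GL_2(\Z/\ell\Z)$, giving $(\ell^2-1)(\ell^2-\ell)-1$. In the Koblitz case, I would sum the count from \corref{countingprime} (with $a = 1$) over all $k \in (\Z/\ell\Z)^{\times}$, which yields
\[
(\ell^3 - \ell^2 - \ell) + (\ell - 2)(\ell^3 - \ell^2 - 2\ell) = \ell(\ell^3 - 2\ell^2 - \ell + 3),
\]
matching the claimed Euler factor. (Alternatively, one can argue directly by counting matrices with $1$ as an eigenvalue, as in \remref{absconv}.)

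For prime powers $\ell^{\alpha} \parallel (L, n)$, the condition $\det M \equiv k \pmod{\ell^{\alpha}}$ is nontrivial, and I would apply \corref{countingcyc} and \corref{countingprime} directly. These yield $\ell^{3(\alpha-1)}(\ell^3 - \ell - 1)$ or $\ell^{3(\alpha-1)}(\ell^3 - \ell)$ for $|\Psi_{n,k}^{\Cyc}(\ell^{\alpha})|$, and $\ell^{3(\alpha-1)}(\ell^3 - \ell^2 - \ell)$ or $\ell^{3(\alpha-1)}(\ell^3 - \ell^2 - 2\ell)$ for $|\Psi_{n,k}^{\Prime}(\ell^{\alpha})|$, depending on whether $\ell \mid k - 1$ or $\ell \nmid k - 1$. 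Multiplying all factors together gives the two formulas.

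There is no real conceptual obstacle: the proof is essentially bookkeeping through the Chinese remainder decomposition and quoting the counting results of Section \ref{CountMat}. The only substantive calculation is the polynomial identity in the Koblitz sum for $\ell \nmid n$, which is a short algebraic verification.
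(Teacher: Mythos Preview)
Your proposal is correct and follows essentially the same approach as the paper: factor via the bijection \eqref{bijection-1}, split according to whether $\ell \mid n$ (using that $\alpha = 1$ when $\ell \nmid n$ by \eqref{E:Def-L}), and then invoke \corref{countingcyc} and \corref{countingprime}. You even spell out the polynomial identity for the Koblitz factor at $\ell \nmid n$, which the paper leaves implicit.
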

Applying \lemref{psi-L}.\eqref{psi-L-cyc}, we obtain
\begin{equation}\label{compu1}
\begin{split}
    \delta_{E,n,k}^{\Cyc}(L) &= \prod_{\substack{\ell^{\alpha} \parallel (n,L) \\ \ell\mid k-1 }} \frac{\ell^{3(\alpha-1)} (\ell^3-\ell-1)}{|\GL_2(\Z/\ell^{\alpha}\Z)|} \cdot \prod_{\substack{\ell^{\alpha} \parallel (n,L) \\ \ell\nmid k-1 }} \frac{\ell^{3(\alpha-1)}(\ell^3-\ell)}{|\GL_2(\Z/\ell^{\alpha}\Z)|} \cdot \prod_{\substack{\ell \mid L \\ \ell \nmid n}} \left( 1 - \frac{1}{|\GL_2(\Z/\ell\Z)|}\right) \\
    &= \prod_{\ell^{\alpha}\parallel (n,L)} \frac{1}{\ell^{\alpha-1}} \cdot \prod_{\substack{\ell \mid (n,L) \\ \ell\mid k-1 }}\left( \frac{1}{\ell-1} - \frac{1}{|\GL_2(\Z/\ell\Z)|}\right) \prod_{\substack{\ell \mid (n,L) \\ \ell\nmid k-1}} \left(\frac{1}{\ell-1}\right) \cdot \prod_{\substack{\ell \mid L \\ \ell \nmid n}} \left( 1 - \frac{1}{|\GL_2(\Z/\ell\Z)|}\right)  \\
    &= \frac{1}{\phi(n_1)} \prod_{\substack{\ell \mid (n,L) \\ \ell \mid k-1}}\left(1-\frac{\phi(\ell)}{|\GL_2(\Z/\ell\Z)|}\right) \cdot \prod_{\substack{\ell \mid L \\ \ell \nmid n}} \left( 1 - \frac{1}{|\GL_2(\Z/\ell\Z)|}\right) .
    \end{split}
\end{equation}

Thus, \eqref{refinedcycheuden} and \eqref{compu1} give
\begin{align*}
    C^{\Cyc}_{E,n,k} = & \ \frac{1}{\phi(n_1)} \prod_{\substack{\ell \mid L \\ k \equiv 1 (\ell)}}\left(1-\frac{\phi(\ell)}{|\GL_2(\Z/\ell\Z)|}\right) \cdot \prod_{\substack{\ell \mid L \\ \ell \nmid n}} \left( 1 - \frac{1}{|\GL_2(\Z/\ell\Z)|}\right) \\
    \cdot & \ \frac{1}{\phi(n_2)} \prod_{\substack{\ell \nmid m_E \\ \ell \mid (n,k-1) }}\left(1- \frac{\phi(\ell)}{|\GL_2(\Z/\ell\Z)|}\right) \prod_{\ell \nmid nm_E} \left( 1 - \frac{1}{|\GL_2(\Z/\ell\Z)|}\right) \\
    =  & \ \frac{1}{\phi(n)} \prod_{\substack{\ell \mid (n,k-1)}} \left( 1 - \frac{\phi(\ell)}{|\GL_2(\Z/\ell\Z)|}\right) \prod_{\ell \nmid n} \left( 1 - \frac{1}{|\GL_2(\Z/\ell\Z)|}\right) = C^{\Cyc}_{n,k}.
\end{align*}
So we obtain $C^{\Cyc}_{E,n,k} = C^{\Cyc}_{n,k}$ if $m_E\nmid L$.

Similarly, for the Koblitz case, applying \lemref{psi-L}.\eqref{psi-L-prime}, we see 
\begin{equation}\label{compu2}
\begin{split}
    \frac{\delta_{E,n,k}^{\Prime}(L)}{\prod_{\ell \mid L}(1-1/\ell)} &= \prod_{\substack{\ell^{\alpha} \parallel (n,L) \\ \ell\nmid k-1}} \frac{\ell \cdot \ell^{3(\alpha-1)} \cdot (\ell^3-\ell^2-2\ell)}{(\ell-1) |\GL_2(\Z/\ell^{\alpha}\Z)|}  \prod_{\substack{\ell^{\alpha} \parallel (n,L) \\ \ell\mid k-1}} \frac{\ell \cdot \ell^{3(\alpha-1)}\cdot(\ell^3-\ell^2-\ell)}{(\ell-1)|\GL_2(\Z/\ell^{\alpha}\Z)|} \prod_{\substack{\ell \mid L \\ \ell \nmid n}} \left( 1 - \frac{\ell^2-\ell-1}{(\ell-1)^3(\ell+1)} \right) \\
    &= \frac{1}{\phi(n_1)} \prod_{\substack{\ell \mid (n,L) \\\ell\nmid k-1}} \left(1 - \frac{\ell^2+\ell}{|\GL_2(\Z/\ell\Z)|}\right)\prod_{\substack{\ell \mid (n,L) \\\ell\mid k-1}}\left( 1 - \frac{\ell}{|\GL_2(\Z/\ell\Z)|}\right) \prod_{\substack{\ell \mid L \\ \ell \nmid n}} \left( 1 - \frac{\ell^2-\ell-1}{(\ell-1)^3(\ell+1)} \right).
    \end{split}
\end{equation}
Now by \propref{KoblitzAPeulerproduct}, \lemref{deltaprimeellprime}, and \eqref{compu2}, we obtain
\begin{align*}
    C^{\Prime}_{E,n,k} = &\frac{1}{\phi(n_1)} \prod_{\substack{\ell \mid (n,L) \\ \ell\mid k-1}}\left( 1 - \frac{\ell}{|\GL_2(\Z/\ell\Z)|}\right) \prod_{\substack{\ell \mid (n,L) \\ \ell\nmid k-1}} \left(1 - \frac{\ell^2+\ell}{|\GL_2(\Z/\ell\Z)|}\right) \prod_{\substack{\ell \mid L \\ \ell \nmid n}} \left( 1 - \frac{\ell^2-\ell-1}{(\ell-1)^3(\ell+1)} \right) \\
    \cdot &\frac{1}{\phi(n_2)} \prod_{\substack{\ell \nmid m_E \\ \ell \mid n \\ \ell\mid k-1}} \left(1 - \frac{\ell}{|\GL_2(\Z/\ell\Z)|}\right) \prod_{\substack{\ell \nmid m_E \\ \ell \mid n \\\ell\nmid k-1}}\left(1-\frac{\ell^2+\ell}{|\GL_2(\Z/\ell\Z)|}\right) \prod_{\ell \nmid nm_E}\left(1-\frac{\ell^2-\ell-1}{(\ell-1)^3(\ell+1)}\right) \\
    =& C^{\Prime}_{n,k}.
\end{align*}
This completes the proof of the theorem for the case where $m_E \nmid L$.

Now, suppose that $m_E\mid L$. This case is a bit more involved. Recall from \sectionref{SeCur} the definition of $\psi_{\ell^{\alpha}}$ and the fact that $G_E(L) = \ker \psi_L$. By \cite[Lemma 16]{MR2534114} and  (\ref{bijection-1}) we have
\begin{equation}\label{estimation-num}
\left|G_E(L)\cap \Psi^{\mathcal{X}}_{n,k}(L)\right|=
\frac{1}{2}\left(\left|\Psi^{\mathcal{X}}_{n,k}(L)\right| +\prod_{\ell^{\alpha}\parallel L}\left(\left|Y^{\mathcal{X}}_{\ell^{\alpha}, +}\right|-\left|Y^{\mathcal{X}}_{\ell^{\alpha}, -}\right|\right)\right),
\end{equation}
for $\mathcal{X}\in \{\Cyc, \Prime\}$, where
\begin{align*}
Y^{\Cyc}_{\ell^{\alpha}, \pm} &\coloneqq
   \left\{M \in \GL_2(\Z/\ell^{\alpha} \Z) : \psi_{\ell^{\alpha}}(M)= \pm 1, M \not \equiv I \neghs \pmod \ell, \det M \equiv k \neghs \pmod {\gcd(\ell^{\alpha}, n)} \right\}, \\
Y^{\Prime}_{\ell^{\alpha}, \pm} &\coloneqq
   \left\{M \in \GL_2(\Z/\ell^{\alpha} \Z) : \psi_{\ell^{\alpha}}(M)= \pm 1, \det(M-I) \not \equiv 0 \neghs \pmod \ell, \det M \equiv k \neghs \pmod {\gcd(\ell^{\alpha},n)} \right\}.
\end{align*}
The sets $Y^{\Cyc}_{\ell^{\alpha}, +}$, $Y^{\Cyc}_{\ell^{\alpha}, -}$, $Y^{\Prime}_{\ell^{\alpha}, +}$, and $Y^{\Prime}_{\ell^{\alpha}, -}$ all depend on $n$ and $k$, though we do not include this dependence in the notation for brevity. We first focus on the size of $|Y^{\mathcal{X}}_{\ell^\alpha,+}| - |Y^{\mathcal{X}}_{\ell^\alpha,-}|$ for primes $\ell$ dividing $L^{\odd}$.

\begin{Lemma}\label{Y-ell}
We have 
\begin{enumerate}
    \item \label{Y-ell-cyc}
    \begin{align*}
    \displaystyle\prod_{\substack{\ell^{\alpha}\parallel L^{\odd}}}\left(|Y^{\Cyc}_{\ell^{\alpha}, +}|-|Y^{\Cyc}_{\ell^{\alpha}, -}| \right) & = \\
    & \hspace{-.5cm} \prod_{\substack{\ell \mid L^{\odd}\\ \ell \nmid n}} (-1)\prod_{\substack{\ell^{\alpha}\parallel (n, L^{\odd})\\ \ell\mid k-1}} \ell^{3(\alpha-1)}(\ell^3-\ell-1) \prod_{\substack{\ell^{\alpha}\parallel (n, L^{\odd})\\ \ell\nmid k-1}}\left(\frac{k}{\ell}\right)\ell^{3(\alpha-1)}(\ell^3-\ell).
      \end{align*}
    \item \label{Y-ell-prime}
    \begin{align*}
    \displaystyle\prod_{\substack{\ell^{\alpha}\parallel L^{\odd}}}\left(|Y^{\Prime}_{\ell^{\alpha}, +}|-|Y^{\Prime}_{\ell^{\alpha}, -}| \right) & = \\
    &  \hspace{-1cm}\prod_{\substack{\ell \mid L^{\odd}\\ \ell \nmid n}} \ell \prod_{\substack{\ell^{\alpha}\parallel (n, L^{\odd})\\ \ell\mid k-1}} \ell^{3(\alpha-1)}(\ell^3-\ell^2-\ell) \prod_{\substack{\ell^{\alpha}\parallel (n, L^{\odd})\\ \ell\nmid k-1}}\left(\frac{k}{\ell}\right)\ell^{3(\alpha-1)}(\ell^3-\ell^2-2\ell).
    \end{align*}
\end{enumerate}
\end{Lemma}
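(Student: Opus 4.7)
The plan is to reduce the product to local computations at each prime $\ell \mid L^{\odd}$ and then split into cases based on whether $\ell \mid n$ or not. Since every $\ell \mid L^{\odd}$ is odd, the character takes the form $\psi_{\ell^{\alpha}}(M) = \left(\frac{\det M_\ell}{\ell}\right)$ by the definition in Section \ref{SeCur}. Thus for each $\ell^{\alpha} \parallel L^{\odd}$, the difference rewrites as a Legendre-weighted sum
\[
|Y^{\mathcal{X}}_{\ell^{\alpha},+}| - |Y^{\mathcal{X}}_{\ell^{\alpha},-}| = \sum_{M} \left(\frac{\det M_\ell}{\ell}\right),
\]
where $M$ ranges over matrices in $\GL_2(\Z/\ell^{\alpha}\Z)$ satisfying the cyclic (resp.\ Koblitz) condition together with $\det M \equiv k \pmod{\gcd(\ell^{\alpha},n)}$.

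For $\ell \nmid n$, we have $\alpha = 1$ by the definition of $L$ in \eqref{E:Def-L}, and there is no determinant constraint. In the cyclicity case I would extend the sum to all of $\GL_2(\Z/\ell\Z)$, use the surjectivity of $\det$ onto $(\Z/\ell\Z)^\times$ together with $\sum_{d} \left(\frac{d}{\ell}\right) = 0$ to kill the total, and subtract the contribution from $M = I$, giving $-1$. For the Koblitz case I would group matrices by their determinant value $d$ and apply \corref{countingprime} with $k = d$ to get
\[
\sum_{d \in (\Z/\ell\Z)^\times} \left(\frac{d}{\ell}\right) \#\{M : \det(M-I) \neq 0,\, \det M = d\} = (\ell^3-\ell^2-\ell) + (\ell^3-\ell^2-2\ell)\sum_{d \not\equiv 1} \left(\frac{d}{\ell}\right),
\]
which telescopes (using $\sum_{d \not\equiv 1}\left(\frac{d}{\ell}\right) = -1$) to the clean factor $\ell$.

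For $\ell^{\alpha} \parallel (n, L^{\odd})$, the constraint $\det M \equiv k \pmod{\ell^{\alpha}}$ forces $\det M_\ell \equiv k \pmod{\ell}$, so every matrix appearing in $Y^{\mathcal{X}}_{\ell^{\alpha},\pm}$ has the same $\psi_{\ell^{\alpha}}$-value, namely $\left(\frac{k}{\ell}\right)$. Consequently $|Y^{\mathcal{X}}_{\ell^{\alpha},+}| - |Y^{\mathcal{X}}_{\ell^{\alpha},-}|$ equals $\left(\frac{k}{\ell}\right)$ times the total count, and this total is supplied directly by \corref{countingcyc} in the cyclicity case and by \corref{countingprime} in the Koblitz case, distinguishing between $k \equiv 1 \pmod{\ell}$ and $k \not\equiv 1 \pmod{\ell}$. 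When $k \equiv 1 \pmod \ell$ the Legendre symbol equals $+1$, which accounts for the absence of a Jacobi factor in the first group of products in the lemma statement.

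Finally, I would assemble these local factors via the Chinese remainder theorem applied to the bijection (\ref{bijection-1}), grouping the primes of $L^{\odd}$ into the three regimes ($\ell \nmid n$; $\ell \mid (n, L^{\odd})$ with $\ell \mid k-1$; and $\ell \mid (n, L^{\odd})$ with $\ell \nmid k-1$) that appear on the right-hand side of each claim. I do not anticipate a conceptual obstacle here: once $\psi_{\ell^{\alpha}}$ is identified with the Legendre symbol of $\det M_\ell$, the whole argument is driven by the counts already established in Section \ref{CountMat}, and the main work is careful bookkeeping to match the case structure of the stated product.
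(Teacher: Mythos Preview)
Your proposal is correct and follows essentially the same approach as the paper: both arguments identify $\psi_{\ell^\alpha}$ with the Legendre symbol of $\det M_\ell$ for odd $\ell$ and then invoke \corref{countingcyc} and \corref{countingprime} to evaluate the local factors, splitting into the cases $\ell \nmid n$, $\ell \mid (n,k-1)$, and $\ell \mid n$ with $\ell \nmid k-1$. The only cosmetic difference is that the paper tabulates $|Y^{\mathcal{X}}_{\ell^\alpha,+}|$ and $|Y^{\mathcal{X}}_{\ell^\alpha,-}|$ separately in each case before subtracting, whereas you pass directly to the character sum $\sum_M \left(\frac{\det M_\ell}{\ell}\right)$, which is a slightly slicker way to organize the same computation.
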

\begin{proof}
From the definition of $\psi_{\ell^{\alpha}}$ for an odd prime $\ell\mid L$, we have  
\begin{align*}
    \left|Y^{\Cyc}_{\ell^{\alpha}, \pm}\right| &=\#\left\{M \in \GL_2(\Z/\ell^{\alpha} \Z) : \left(\frac{\det M}{\ell}\right)  = \pm 1, M \not \equiv I \neghs \pmod \ell, \det M \equiv k \neghs \pmod {\ell^{\alpha}} \right\}, \\
    \left|Y^{\Prime}_{{\ell^\alpha},\pm}\right| &= \#\left\{M \in \GL_2(\Z/\ell^\alpha\Z) : \left( \frac{\det M}{\ell}\right) = \pm 1, \det(M-I) \not \equiv 0 \pmod \ell, \det M \equiv k \neghs \pmod {\ell^\alpha} \right\}.
\end{align*}

By Corollaries \ref{countingcyc} and \ref{countingprime}, it is an easy exercise to check that
\begin{align*}
    \left|Y^{\Cyc}_{\ell^{\alpha},+}\right| &= \begin{cases}
        \ell^{3(\alpha-1)} \left(\ell^3-\ell-1 \right) & \text{ if } \ell\mid n \text{ and } k \equiv 1 \pmod {\ell}, \\
        \ell^{3(\alpha-1)}(\ell^3-\ell) & \text{ if } \ell\mid n, k \not \equiv 1 \pmod {\ell}, \text{ and } \left(\frac{k}{\ell}\right) = 1, \\
        0 & \text{ if }  \ell\mid n, \left(\frac{k}{\ell}\right) = -1, \\
        \frac{(\ell^2-\ell)(\ell^2-1)}{2}-1 & \text{ if } \ell \nmid n,
    \end{cases} \\
    \left| Y^{\Cyc}_{\ell^{\alpha},-}\right| &= \begin{cases}
        \ell^{3(\alpha-1)}(\ell^3-\ell) & \text{ if } \ell\mid n \text{ and } \left(\frac{k}{\ell}\right) = -1, \\
        0 & \text{ if } \ell\mid n \text{ and } \left(\frac{k}{\ell}\right) = 1,\\
        \frac{(\ell^2-\ell)(\ell^2-1)}{2} & \text{ if }  \ell \nmid n,
    \end{cases} \\
    \left|Y^{\Prime}_{\ell^{\alpha},+}\right| & = \begin{cases}
        \ell^{3(\alpha-1)} (\ell^3-\ell^2-\ell) & \text{ if } \ell\mid n\text{ and } k \equiv 1 \pmod \ell \\
        \ell^{3(\alpha-1)} (\ell^3-\ell^2-2\ell) & \text{ if } \ell\mid n \text{ and } k \not \equiv 1 \pmod \ell, \text{ and } \left(\frac{k}{\ell}\right) = 1, \\
        0 & \text{ if } \ell\mid n \text{ and } \left(\frac{k}{\ell}\right) = - 1, \\
        \frac{(\ell-1)(\ell^3-\ell^2-2\ell)}{2} + \ell & \text{ if } \ell \nmid n,
    \end{cases} \\
    \left|Y^{\Prime}_{\ell^{\alpha}, -}\right| &= \begin{cases}
        \ell^{3(\alpha-1)} (\ell^3-\ell^2-2\ell) & \text{ if } \ell\mid n \text{ and } \left(\frac{k}{\ell}\right) = - 1, \\
        0 & \text{ if }\ell^\mid n \text{ and } \left(\frac{k}{\ell}\right) = 1, \\
        \frac{(\ell-1)(\ell^3-\ell^2-2\ell)}{2} & \text{ if }  \ell \nmid n.
        \end{cases}
\end{align*}
The result now follows from some simple computations.
\end{proof}

Finally, we evaluate  $|Y^{\mathcal{X}}_{\ell^{\alpha}, +}|-|Y^{\mathcal{X}}_{\ell^{\alpha}, -}|$ when $\ell=2$. 
Suppose $\Delta' \not \equiv 1 \pmod 4$. Then we have $4 \mid m_E$. Since we are in the case that $m_E \mid L$, we must have $4 \mid n$. In particular, $n$ is even, and hence $k$ must be odd. On the other hand, if $\Delta' \equiv 1 \pmod 4$, then $n$ may not be even.

\begin{Lemma}\label{Y-L-2}
For fixed $\Delta'$ and $k$, let $\tau$ be defined  as in Definition \ref{Definitionoftau}.
Then 
\begin{enumerate}
\item $|Y^{\Cyc}_{2^\alpha,+}| - |Y^{\Cyc}_{2^{\alpha},-}| = \tau \cdot 2^{3(\alpha-1)}$
\item $|Y^{\Prime}_{2^{\alpha},+}| - |Y^{\Prime}_{2^{\alpha},-}| = -(2\tau)  \cdot  2^{3(\alpha-1)}.$
\end{enumerate}
\end{Lemma}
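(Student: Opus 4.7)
The plan is to reduce both statements to a direct computation on $\GL_2(\Z/2\Z)\cong S_3$ by exploiting the fact that every character appearing in $\psi_{2^{\alpha}}$ other than the signature $\epsilon$ depends only on $\det M$ modulo a power of $2$, which is pinned down by the hypothesis $\det M\equiv k\pmod{\gcd(2^{\alpha},n)}$.

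First, I would split into four cases according to $\Delta'\pmod 8$, using the explicit formula for $\psi_{2^{\alpha}}$ from Section \ref{SeCur}. Under $m_E\mid L$, one checks that: $\Delta'\equiv 1\pmod 4$ imposes no constraint on $n$; $\Delta'\equiv 3\pmod 4$ forces $4\mid n$ and $\alpha\geq 2$; $\Delta'\equiv 2,6\pmod 8$ forces $8\mid n$ and $\alpha\geq 3$. In the last three cases, the determinant constraint ensures $\chi_4(\det M_4)=\chi_4(k)$ and $\chi_8(\det M_8)=\chi_8(k)$, and these constants factor out of each of the two sums.

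Next, I would reduce the remaining sum to $\sum \epsilon(M_2)$ over a subset of $\GL_2(\Z/2\Z)$. For the cyclicity side, the relevant subset is $S_3\setminus\{I\}$, giving $\sum\epsilon=2-3=-1$. For the Koblitz side, the condition $\det(M-I)\not\equiv 0\pmod 2$ for a matrix of odd determinant is equivalent to $\tr M\equiv 1\pmod 2$; the corresponding elements of $S_3$ are the two $3$-cycles (each of signature $+1$), giving $\sum\epsilon=+2$. Lemma \ref{usefullemma} then says that every $M_2\in\GL_2(\Z/2\Z)$ of determinant $\equiv k\pmod 2$ (automatic since $k$ is odd whenever $2\mid n$) lifts to exactly $2^{3(\alpha-1)}$ matrices in $\GL_2(\Z/2^{\alpha}\Z)$ satisfying $\det M\equiv k\pmod{2^{\alpha}}$; when $2\nmid n$ one has $\alpha=1$ and $2^{3(\alpha-1)}=1$, consistent with the absence of a determinant constraint.

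Assembling these ingredients in each of the four cases yields
\[
|Y^{\Cyc}_{2^{\alpha},+}|-|Y^{\Cyc}_{2^{\alpha},-}| = \eta_k\cdot(-1)\cdot 2^{3(\alpha-1)},\qquad |Y^{\Prime}_{2^{\alpha},+}|-|Y^{\Prime}_{2^{\alpha},-}| = \eta_k\cdot(+2)\cdot 2^{3(\alpha-1)},
\]
where $\eta_k\in\{\pm1\}$ is $1$ (Case $\Delta'\equiv 1\pmod 4$), $\chi_4(k)$, $\chi_8(k)$, or $\chi_8(k)\chi_4(k)$ respectively. Comparing with Definition \ref{Definitionoftau} shows precisely that $\tau=-\eta_k$ in every case, which gives the claimed formulas $\tau\cdot 2^{3(\alpha-1)}$ and $-2\tau\cdot 2^{3(\alpha-1)}$. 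The only real obstacle is the bookkeeping in the fourth case, $\Delta'\equiv 6\pmod 8$, where one must verify on the nose that $-\chi_8(k)\chi_4(k)$ matches the tabulated value of $\tau$ for each of $k\equiv 1,3,5,7\pmod 8$; this is a short direct check.
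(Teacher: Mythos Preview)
Your proposal is correct and follows essentially the same approach as the paper: reduce to a computation in $\GL_2(\Z/2\Z)\cong S_3$, observe that the $\chi_4$ and $\chi_8$ factors in $\psi_{2^\alpha}$ depend only on $\det M$ modulo the relevant power of $2$ and hence equal $\chi_4(k)$, $\chi_8(k)$ under the determinant constraint, and then lift via Lemma~\ref{usefullemma}. The only difference is organizational: the paper treats $\Delta'\equiv 1\pmod 4$ and $\Delta'\equiv 3\pmod 4$ separately and then says ``similar arguments'' for the remaining cases, whereas you package all four cases uniformly via the constant $\eta_k$ and the single identity $\tau=-\eta_k$, which is a cleaner presentation of the same computation.
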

\begin{proof}
If $\Delta'\equiv 1\pmod 4$, then by the definition of $\psi_{2^{\alpha}}(\cdot)$, 
\begin{align*}
    Y^{\Cyc}_{2^{\alpha},\pm} &= \left\{M \in \GL_2(\Z/2^\alpha\Z) : \epsilon(M_2) = \pm 1, M \not \equiv I \neghs \pmod 2, \det M \equiv k \neghs \pmod {2^{\alpha}}\right\}, \\
     Y^{\Prime}_{2^{\alpha},\pm} &= \left\{M \in \GL_2(\Z/2^\alpha\Z) : \epsilon(M_2) = \pm 1, \det(M-I) \not \equiv 0 \neghs \pmod 2, \det M \equiv k \neghs \pmod {2^{\alpha}}\right\}.
\end{align*}

 Let $h^{\Cyc}_{\pm} = |Y^{\Cyc}_{2,\pm}|$.  In the case where $\alpha = 1$, it is clear that $h^{\Cyc}_+=2$ and  $h^{\Cyc}_- =3$.
For $\alpha \geq 2$, by \lemref{usefullemma}, we obtain
\begin{equation*}
    \left|Y^{\Cyc}_{2^{\alpha},\pm}\right| = h^{\Cyc}_\pm \cdot 2^{3(\alpha-1)}
\end{equation*}
and hence $|Y^{\Cyc}_{2^\alpha,+}| - |Y^{\Cyc}_{2^\alpha,-}| = -2^{3(\alpha-1)}$.

Let us check the size of $Y^{\Prime}_{2^{\alpha},\pm}$.  
In the case where $\alpha = 1$, we have that
\[
Y^{\Prime}_{2,+} = \left\{ \begin{pmatrix}
    1 & 1 \\ 1 & 0
\end{pmatrix},
\begin{pmatrix}
    0 & 1 \\ 1 & 1
\end{pmatrix}
\right\}
\quad
\text{and}
\quad
Y^{\Prime}_{2,-} = \emptyset.
\]
Setting 
$h^{\Prime}_{\pm} \coloneqq |Y^{\Prime}_{2,\pm}|$, we see that $h^{\Prime}_+ = 2$ and $h^{\Prime}_- =0$.  By \lemref{usefullemma}, we obtain
\[
  \left|Y^{\Prime}_{2^{\alpha},\pm}\right| = h^{\Prime}_\pm \cdot 2^{3(\alpha-1)}
\]
and hence  
$|Y^{\Prime}_{2^{\alpha},+}| -|Y^{\Prime}_{2^{\alpha},-}|  = 2 \cdot 2^{3(\alpha-1)}.$

Next, we assume $\Delta'\equiv 3\pmod 4$. Then, by the definition of $\psi_{2^{\alpha}}(\cdot)$, we have 
\begin{align*}
 Y^{\Cyc}_{2^{\alpha},\pm} &= \left\{M \in \GL_2(\Z/2^\alpha\Z) : \epsilon(M_2)\chi_4(k) = \pm 1, M \not \equiv I \neghs \pmod 2, \det M \equiv k \neghs \pmod {2^{\alpha}}\right\}, \\
 Y^{\Prime}_{2^{\alpha},\pm} &= \left\{M \in \GL_2(\Z/2^\alpha\Z) : \epsilon(M_2)\chi_4(k) = \pm 1, \det(M-I) \not \equiv 0 \neghs \pmod 2, \det M \equiv k \neghs \pmod {2^{\alpha}}\right\}.
\end{align*}
Then 
\begin{align*}
  |Y^{\Cyc}_{2^{\alpha},+}|-|Y^{\Cyc}_{2^{\alpha},-}|&=
 \begin{cases}
    -2^{3(\alpha-1)} & \text{ if } k  \equiv 1 \pmod 4, \\
    2^{3(\alpha-1)} &  \text{ if } k  \equiv 3 \pmod 4.
        \end{cases} \\
 |Y^{\Prime}_{2^{\alpha},+}|- |Y^{\Prime}_{2^{\alpha},-}|&=
 \begin{cases}
    2^{3\alpha-2} & \text{ if } k  \equiv 1 \pmod 4, \\
     -2^{3\alpha-2} &  \text{ if } k  \equiv 3 \pmod 4.
        \end{cases}
\end{align*}
Similar arguments can be applied to deduce the results for $\Delta'\equiv 2\pmod 8$ and  $\Delta'\equiv 6 \pmod 8$.
\end{proof}

With the results of the above lemmas in hand, we now determine $|G_E(L) \cap \Psi^{\mathcal{X}}_{n,k}|$. Let us treat the cyclicity case first. By Lemma \ref{Y-L-2} and \eqref{estimation-num}, we find that $|G_E(L) \cap \Psi^{\Cyc}_{n,k}|$ is equal to
\begin{align*}
    &\frac{1}{2}\left(|\Psi^{\Cyc}_{n,k}(L)| + \prod_{\ell^\alpha \parallel L}(|Y^{\Cyc}_{\ell^\alpha,+}| - |Y^{\Cyc}_{\ell^\alpha,-}| ) \right) \\
    = &\frac{1}{2} \left( \prod_{\substack{\ell^\alpha \parallel (L,n) \\\ell\mid k-1}} \ell^{3(\alpha-1)}(\ell^3-\ell-1) \prod_{\substack{\ell^\alpha \parallel (L,n) \\ \ell\nmid k-1  }}\ell^{3(\alpha-1)} (\ell^3-\ell) \prod_{\substack{\ell \mid L \\ \ell \nmid n}}(|\GL_2(\Z/\ell\Z)|-1) \right. \\
& + 2^{3(\alpha-1)} \tau    \prod_{\substack{\ell^{\alpha}\parallel (n, L^{\odd})\\ \ell\mid k-1}} \ell^{3(\alpha-1)}(\ell^3-\ell-1) \prod_{\substack{\ell^{\alpha}\parallel (n, L^{\odd}) \\ \ell\nmid k-1}}\left(\frac{k}{\ell}\right)\ell^{3(\alpha-1)}(\ell^3-\ell)\left. \prod_{\substack{\ell \mid L^{\odd}\\ \ell \nmid n}} (-1) \right) \\
   = &\frac{1}{2}\prod_{\substack{\ell^\alpha \parallel (L,n)}} \ell^{3(\alpha-1)} \prod_{\substack{\ell \mid (L^{\odd},n) \\ \ell \mid k-1}}(\ell^3-\ell-1) \prod_{\substack{\ell \mid (L^{\odd},n) \\ \ell \nmid k-1}}(\ell^3-\ell) \left( 5\prod_{\substack{\ell \mid L^{\odd} \\ \ell \nmid n}} (|\GL_2(\Z/\ell\Z)|-1) + \tau^{\Cyc} \right).
\end{align*}

Since we are assuming that $m_E \mid L$, $G_E(L)$ must be an index $2$ subgroup of $\GL_2(\Z/L\Z)$. Thus, we have
\begin{equation}\label{sizeofGEL}
    |G_E(L)| = \frac{1}{2} \cdot \prod_{\ell^{\alpha}\parallel L}|\GL_2(\Z/\ell^{\alpha}\Z)|=\frac{1}{2} \cdot \prod_{\ell^\alpha \parallel L}\ell^{4(\alpha-1)}|\GL_2(\Z/\ell\Z)|.
\end{equation}

Along with \lemref{Sizeofdeltacyc} and \propref{cycalmosteuler}, a short computation reveals that
\begin{align*}
   C_{E, n, k}^{\Cyc} & = \frac{|G_E(L) \cap \Psi^{\Cyc}_{n,k}(L)|}{|G_E(L)|} \prod_{\substack{\ell \nmid m_E \\ \ell^\alpha \parallel n}} \delta^{\Cyc}_{E,n,k}(\ell^\alpha) \prod_{\ell \nmid nm_E} \delta^{\Cyc}_{E}(\ell) \\ 
  & =C_{n, k}^{\Cyc}\left(1+ \tau^{\Cyc}\frac{1}{\displaystyle 5\prod_{\substack{\ell \mid L^{\odd} \\ \ell \nmid n}} (|\GL_2(\Z/\ell\Z)|-1)}\right).
\end{align*}

Now we move on to the Koblitz case. By Lemma \ref{Y-L-2}, we  have $|Y^{\Prime}_{2^{\alpha},+}| - |Y^{\Prime}_{2^{\alpha},-}|=-\tau 2^{3\alpha-2}$. 
Hence, by (\ref{estimation-num}), a simple calculation reveals that $|G_E(L) \cap \Psi^{\Prime}_{n,k}(L)|$ equals
\begin{align*}
    &\frac{1}{2}\left(|\Psi^{\Prime}_{n,k}(L)| + \prod_{\ell^\alpha \parallel L}(|Y^{\Prime}_{\ell^\alpha,+}| - |Y^{\Prime}_{\ell^\alpha,-}|) \right) \\
    &= \frac{1}{2}\left(  \prod_{\substack{\ell^{\alpha}\parallel (L, n)\\ \ell\mid k-1}}\ell^{3(\alpha-1)}(\ell^3-\ell^2-\ell)\prod_{\substack{\ell^{\alpha}\parallel (L, n)\\ \ell\nmid k-1}}\ell^{3(\alpha-1)}(\ell^3-\ell^2-2\ell)\prod_{\substack{\ell\mid L\\\ell \nmid n}}\ell(\ell^3-2\ell^2-\ell+3)  \right. \\
    &\left. - 2^{3\alpha-2} \tau 
\prod_{\substack{\ell^{\alpha}\mid (n, L^{\odd})\\ \ell\mid k-1}} \ell^{3(\alpha-1)}(\ell^3-\ell^2-\ell) \prod_{\substack{\ell^{\alpha}\mid (n, L^{\odd})\\ \ell\nmid k-1}}\left(\frac{k}{\ell}\right)\ell^{3(\alpha-1)}(\ell^3-\ell^2-2\ell)  \prod_{\substack{\ell \mid L^{\odd}\\ \ell \nmid n}} \ell 
 \right) \\
    &= \frac{1}{2} \prod_{\ell^\alpha \parallel L}\ell^{3(\alpha-1)} \prod_{\substack{\ell \mid (L,n) \\ \ell \mid k-1}}(\ell^3-\ell^2-\ell) \prod_{\substack{\ell \mid (L,n) \\ \ell \nmid k-1}}(\ell^3-\ell^2-2\ell) \left( \prod_{\substack{\ell \mid L \\ \ell \nmid n}}\ell(\ell^3-2\ell^2-\ell+3) +\tau^{\Prime} \prod_{\substack{\ell \mid L \\ \ell \nmid n}}\ell\right).
\end{align*}

Finally, by \propref{KoblitzAPeulerproduct}, \lemref{deltaprimeellprime}, (\ref{compu2}), and \eqref{sizeofGEL}, we get 
\begin{align*}
   C_{E, n, k}^{\Prime} & =\frac{|G_E(L) \cap \Psi^{\Prime}_{n,k}(L)|}{|G_E(L)|\cdot  \prod_{\ell \mid L}(1-1/\ell)} \prod_{\substack{\ell \nmid m_E \\ \ell^\alpha \parallel n}} \frac{\delta^{\Prime}_{E,n,k}(\ell^\alpha)}{1-1/\ell} \prod_{\ell \nmid nm_E} \frac{\delta^{\Prime}_{E}(\ell)}{1-1/\ell} \\
   & = C_{n, k}^{\Prime}\left(1+ \frac{\tau^{\Prime} \cdot \displaystyle \prod_{\substack{\ell\mid L \\\ell\nmid n}}\ell}{\displaystyle 
 \prod_{\substack{\ell\mid L \\\ell\nmid n}}\ell(\ell^3-2\ell^2-\ell+3)}\right) =  C_{n, k}^{\Prime}\left(1+ \frac{\tau^{\Prime}}{\displaystyle 
 \prod_{\substack{\ell\mid L \\\ell\nmid n}}(\ell^3-2\ell^2-\ell+3)}\right).
\end{align*}
This completes the proof of Theorem \ref{serre-constants}.

\section{On the Koblitz constant for non-Serre curves} \label{KobConNonSeCur}
\subsection{Bounding the Koblitz constant for non-CM, non-Serre curves} \label{BoundKobConNonCM}
In this subsection, we will determine an upper bound for $C^{\Prime}_{E,n,k}$ in the case of non-CM, non-Serre curves. 

Let $E/\Q$ be a non-CM, non-Serre curve, defined by the model \eqref{basicmodel}, of adelic level $m_E$. Let $L$ be as defined in \eqref{E:Def-L}. Then we write  $L = L_1L_2$ such that $L_2$ is the product of prime powers $\ell^{\alpha} \parallel L$ with $\ell \not \in \{2,3,5\}$ and $G_E(\ell) \simeq \GL_2(\Z/\ell\Z)$. By \cite[Appendix, Theorem 1]{MR2118760}, $G_E(L_2) \simeq \GL_2(\Z/L_2\Z)$. Let $\varpi \colon \GL_2(\Z/L\Z) \to \GL_2(\Z/L_2\Z)$ be the natural reduction map. Note that
$$\varpi\left(G_E(L) \cap \Psi^{\Prime}_{n,k}(L)\right) \subseteq G_E(L_2) \cap \Psi^{\Prime}_{n,k}(L_2).$$
Since $\varpi$ is a surjective group homomorphism, we have 
\begin{equation}\label{aux2}
    \begin{split}
        \delta_{E,n,k}^{\Prime}(L) &= \frac{\left|G_E(L) \cap \Psi^{\Prime}_{n,k}(L)\right|}{\left|G_E(L)\right|} \\
&\leq \frac{\left| \varpi^{-1}\left(G_E(L_2) \cap \Psi^{\Prime}_{n,k}(L_2)\right)\right|}{|\varpi^{-1}(G_E(L_2))|}
= \frac{\left| G_E(L_2) \cap \Psi^{\Prime}_{n,k}(L_2)\right|}{|G_E(L_2)|} = \delta_{E,n,k}^{\Prime}(L_2).
    \end{split}
\end{equation}
Since $\rho_{E,L_2}$ is surjective, we  apply the same argument as in the proof of \lemref{deltaprimeellprime} and obtain
$$\frac{\delta_{E,n,k}^{\Prime}(L_2)}{\prod_{\ell \mid L_2}(1-1/\ell)}\leq 1.$$
Before proceeding to bound the constant $C_{E, n, k}^{\Prime}$, we first state a standard analytic result. 
\begin{Lemma}\label{easylemma}
    For any positive integer $M$, we have
    $$\prod_{\ell \mid M} \left(1-\frac{1}{\ell}\right)^{-1} \ll \max\{1, \log \log M\}.$$
\end{Lemma}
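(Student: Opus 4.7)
The plan is to reduce the general bound to Mertens' third theorem $\prod_{\ell \leq x}(1-1/\ell)^{-1} \sim e^{\gamma}\log x$ via a two-step majorization: first replace the prime divisors of $M$ by the smallest possible primes, and then use a lower bound on the primorial to control how large those primes can be in terms of $M$.

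More concretely, first I would observe that the left-hand side depends only on the set of prime divisors of $M$ and is strictly increasing in each prime removed/decreased (since $(1-1/\ell)^{-1}$ is decreasing in $\ell$). Let $k = \omega(M)$ denote the number of distinct prime divisors of $M$, and let $p_1 < p_2 < \cdots$ denote the sequence of rational primes in increasing order. Then
\[
\prod_{\ell \mid M}\left(1 - \frac{1}{\ell}\right)^{-1} \leq \prod_{i=1}^{k}\left(1 - \frac{1}{p_i}\right)^{-1}.
\]

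Next, since $p_1 p_2 \cdots p_k \leq M$, Chebyshev's estimate (equivalently, a weak form of the prime number theorem) applied to $\theta(p_k) = \sum_{i=1}^k \log p_i = \log(p_1 \cdots p_k)$ yields $p_k \ll \log M$ for $M$ sufficiently large, with an absolute implied constant. Combining this with the previous display and Mertens' third theorem,
\[
\prod_{i=1}^{k}\left(1 - \frac{1}{p_i}\right)^{-1} \leq \prod_{\ell \leq p_k}\left(1 - \frac{1}{\ell}\right)^{-1} \ll \log p_k \ll \log \log M,
\]
valid once $M$ is large enough that $\log M$ exceeds some absolute threshold.

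The only remaining step is to absorb the finitely many small values of $M$ into the $\max\{1, \log\log M\}$: for $M$ in any bounded range, the left-hand side is itself bounded, so the inequality holds with an absolute constant. The main (and only) nontrivial input is Mertens' theorem; the primorial bound $p_k \ll \log M$ is routine. No further obstacles are anticipated.
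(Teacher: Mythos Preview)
Your argument is correct and is essentially the same as the paper's, which simply says the bound follows from Mertens' theorem and refers to \cite[p.~767]{MR2805578} for the details. You have supplied exactly those details (the majorization by the first $k$ primes, the primorial bound $p_k \ll \log M$ via Chebyshev, and then Mertens), so there is nothing to add.
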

\begin{proof}
    Follows from Mertens' theorem \cite[p.\ 53, (15)]{MR1579612}. See \cite[p.\ 767]{MR2805578} for the argument.
\end{proof}

From \lemref{easylemma}, \eqref{Koblitzpart}, \eqref{KobAPproduct},  \lemref{deltaprimeellprime}, and \eqref{aux2}, we obtain
\begin{align}\label{upperboundofCprimeEnk}
\begin{split}
    C^{\Prime}_{E,n,k} &= \frac{\delta_{E,n,k}^{\Prime}(L)}{\prod_{\ell \mid L}(1-1/\ell)} \cdot \prod_{\substack{\ell^\alpha \parallel n \\ \ell \nmid m_E}}\frac{\delta_{E,n,k}^{\Prime}(\ell^\alpha)}{1-1/\ell} \prod_{\ell \nmid nm_E}\frac{\delta_{E,n,k}^{\Prime}(\ell)}{1-1/\ell} \\
    &\leq \frac{1}{\prod_{\ell \mid L_1}(1-1/\ell)} \cdot \frac{\delta_{E,n,k}^{\Prime}(L_2)}{\prod_{\ell \mid L_2}(1-1/\ell)} \leq \prod_{\ell \mid L_1} \frac{1}{1-1/\ell} \ll \max\{1, \log \log \rad(L_1)\}.
\end{split}
\end{align}
Our next task is to bound $\rad (L_1)$ in terms of $a$ and $b$ as in \eqref{basicmodel}. Write $j_E$ to denote the $j$-invariant of $E$ and $h \coloneqq h(j_E)$ for the Weil height of $E$. If $p \mid L_1$, then either $p \leq 5$ or $p$ is an exceptional prime (meaning $G_E(p) \neq \GL_2(\F_p)$). By the main theorem of \cite{MR1209248}, there exist absolute constant $\kappa$ and $\lambda$ for which $\rho_{E,\ell}$ is not exceptional for all $\ell > \kappa(\max{1,h})^\lambda$. Since $\rad(L_1)$ is squarefree, we have
\begin{align}\label{hEab2}
   &  \rad(L_1) \leq 30\prod_{\ell \leq \kappa (\max\{1,h\})^\lambda} \ell \nonumber \\
  &   \implies \log \rad (L_1) \ll  \sum_{\ell \leq \kappa(\max\{1,h\})^\lambda} \log \ell \ll (\max\{1,h\})^\lambda \log \max\{1, h\}.
\end{align}
Since $E$ is given by the model \eqref{basicmodel}, we have that 
\begin{equation}\label{hEab1}
    h=h(j_{E}) \ll \log \max \{|a|^3,|b|^2\}.
\end{equation}
Combining  \eqref{upperboundofCprimeEnk}, \eqref{hEab2}, and \eqref{hEab1}, we obtain the following result.
\begin{Proposition}\label{nonSerre}
    Let $E/\Q$ be a non-CM, non-Serre curve given by \eqref{basicmodel}. Then we have
    $$C^{\Prime}_{E,n,k} \ll \log \log \max\{|a|^3,|b|^2\}.$$
\end{Proposition}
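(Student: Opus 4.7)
The plan is to reduce the estimate of $C^{\Prime}_{E,n,k}$ to a product of Euler factors indexed by a small set of "bad" primes (those where either $G_E(\ell)$ fails to be surjective or $\ell$ is small), and then to bound the size of this bad set using the effective isogeny theorem of Masser--Wüstholz \cite{MR1209248}. Concretely, I would factor $L = L_1 L_2$, where $L_2$ collects those prime power factors $\ell^\alpha \parallel L$ with $\ell \notin \{2,3,5\}$ and $G_E(\ell) = \GL_2(\F_\ell)$, and $L_1$ collects the remaining factors. By the Appendix of \cite{MR2118760}, surjectivity modulo each $\ell \mid L_2$ lifts to the full product, so $G_E(L_2) \simeq \GL_2(\Z/L_2\Z)$.

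With this split in hand, the natural reduction map $\varpi \colon \GL_2(\Z/L\Z) \to \GL_2(\Z/L_2\Z)$ yields $\varpi(G_E(L) \cap \Psi^{\Prime}_{n,k}(L)) \subseteq G_E(L_2) \cap \Psi^{\Prime}_{n,k}(L_2)$, and because $\varpi$ is a surjective group homomorphism we obtain $\delta^{\Prime}_{E,n,k}(L) \leq \delta^{\Prime}_{E,n,k}(L_2)$. Combining this with the Euler product expansion of \propref{KoblitzAPeulerproduct}, the $\ell \nmid m_E$ factors computed in \lemref{deltaprimeellprime}, and the asymptotic in \remref{absconv}, each individual Euler factor away from $L_1$ contributes at most $1$, leaving
\[
    C^{\Prime}_{E,n,k} \;\leq\; \prod_{\ell \mid L_1} \frac{1}{1 - 1/\ell} \;\ll\; \max\{1,\, \log\log \rad(L_1)\},
\]
where the last step is \lemref{easylemma} (Mertens).

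The main obstacle, and the reason we separated $L_1$ out, is controlling $\rad(L_1)$ in terms of $|a|$ and $|b|$. Every prime $\ell \mid L_1$ is either one of the finitely many primes $\leq 5$, or an exceptional prime of $E$ (that is, $G_E(\ell) \neq \GL_2(\F_\ell)$). For these I would invoke the Masser--Wüstholz theorem to produce absolute constants $\kappa,\lambda > 0$ such that every exceptional prime is at most $\kappa (\max\{1,h(j_E)\})^\lambda$, where $h(j_E)$ is the Weil height of the $j$-invariant. Applying Chebyshev's bound $\sum_{\ell \leq X} \log \ell \ll X$ then gives $\log \rad(L_1) \ll (\max\{1,h(j_E)\})^\lambda \log \max\{1,h(j_E)\}$. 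Finally, for a curve given by the short Weierstrass model \eqref{basicmodel}, the standard bound $h(j_E) \ll \log \max\{|a|^3,|b|^2\}$ combined with a further logarithm yields $\log\log \rad(L_1) \ll \log\log \max\{|a|^3,|b|^2\}$, which chained with the previous estimate completes the proof. The delicate point is entirely the invocation of Masser--Wüstholz; once one accepts their bound, the rest is bookkeeping of Euler factors.
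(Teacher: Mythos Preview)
Your proposal is correct and follows essentially the same argument as the paper: the same $L = L_1 L_2$ split via \cite[Appendix, Theorem 1]{MR2118760}, the same reduction-map inequality $\delta^{\Prime}_{E,n,k}(L) \leq \delta^{\Prime}_{E,n,k}(L_2)$, the same bound $C^{\Prime}_{E,n,k} \leq \prod_{\ell \mid L_1}(1-1/\ell)^{-1}$ via \lemref{easylemma}, and the same control of $\rad(L_1)$ through Masser--W\"ustholz combined with $h(j_E) \ll \log\max\{|a|^3,|b|^2\}$. There is nothing to add.
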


\subsection{Bounding the Koblitz constant for CM curves}\label{BoundKobConCM}
In this subsection,  we focus on CM elliptic curves $E/\Q$. The goal is to show that the constant $C^{\Prime}_{E, n,  k}$ is  bounded independent of the choice of the CM curve (Proposition \ref{CMupperbound}). We keep the notation from \sectionref{GalRepCM}.

Let $E/\Q$ be an elliptic curve with CM by an order $\mathcal{O}$ in an imaginary quadratic field $K$. Let $p$ be a prime of Koblitz reduction for $E/\Q$. Since $[K:\Q] = 2$, the prime $p$ either splits completely, stays inert, or ramifies over  $K/\Q$.

If $p$ does not split over $K/\Q$, then by Deuring's criterion \cite{MR0005125},  $p$ is a supersingular prime for $E$ and we have $a_p(E) = 0$.
Therefore, 
$$|\widetilde{E}_p(\F_p)| =p + 1,$$
which is an even number if $p > 2$. Thus, an odd supersingular prime cannot be a prime of Koblitz reduction for $E$.

On the other hand, if  $p$ splits completely over $K/\Q$. Let $\fp$ be one of the primes in $K$ lying above $p$. Then $\F_\fp\simeq \F_p$, and hence
 the $\widetilde{E}_p$ is isomorphic to $\widetilde{E}_\fp$ as an elliptic curve over the base field. In particular, 
 \[|\widetilde{E}_p(\F_p)| = |\widetilde{E}_\fp(\F_\fp)|.\]
 Thus, we obtain
\begin{equation*}
\begin{split}
      \pi_E^{\Prime}(x;n,k)  &\coloneqq  \#\{p \leq x: p \nmid N_E, |\widetilde{E}_p(\F_p)| \text{ is prime}, p \equiv k \neghs \pmod n\} \\
    &=  \#\{p \leq x : p \nmid N_E, |\widetilde{E}_p(\F_p)| \text{ is prime}, p \text{ splits over }K/\Q, p \equiv k \neghs \pmod n \} + \bigO(1) \\
   &= \frac{1}{2} \#\{\fp:  N_{K/\Q}(\fp) \leq x, N_{K/\Q}(\fp) \nmid N_E, |\widetilde{E}_\fp(\F_\fp)| \text{ is prime}, \\
   & \hspace{1.2cm}
   N_{K/\Q}(\fp) \text{ is a rational prime}, N_{K/\Q}(\fp) \equiv k \neghs \pmod n \} + \bigO(1).
   \end{split}
\end{equation*}
Koblitz's conjecture in arithmetic progressions  for 
 CM elliptic curves can be formulated as follows:
 \begin{Conjecture}\label{CMcaseconjecture}
    Let $E/\Q$ be an elliptic curve with  CM by $\mathcal{O}$ in an imaginary quadratic field $K$. Let $m_E$ be defined as in \lemref{CMpropertyofmE},  $n$ be a positive integer, and $k$ be an integer coprime to $n$.  Then there exists a constant $C^{\Prime}_{E/K,n,k}$  defined in (\ref{KoblitzAPeulerCM}) such that  
    \begin{equation}\label{expectation}
\pi^{\Prime}_{E}(x;n,k) \sim \frac{C^{\Prime}_{E/K,n,k}}{2} \cdot \frac{x}{\log^2 x} \hs \text{ as } x\to \infty.
\end{equation}
If the constant vanishes, we interpret \eqref{expectation} as stating that there are only finitely many primes $p \equiv k \pmod n$ of Koblitz reduction for $E$.
\end{Conjecture}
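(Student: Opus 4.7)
The plan is to adapt Zywina's heuristic from \sectionref{DefCon} to the CM setting. The statement is framed as a conjecture, so the task is really to (a) \emph{define} the constant $C^{\Prime}_{E/K,n,k}$ in such a way that the asymptotic in \eqref{expectation} is natural, and (b) explain why the prefactor is $1/2$. The ``proof'' of the asymptotic itself is out of reach: even GRH is insufficient for Koblitz-type problems, so the conjecture is genuinely conjectural.

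First I would work over $K$ instead of $\Q$. Replacing $\GL_2(\Z/m\Z)$ with $(\mathcal{O}/m\mathcal{O})^\times$ and using the image $G_E(m)$ of $\rho_{E,m}\colon \Gal(\overline{K}/K) \to (\mathcal{O}/m\mathcal{O})^\times$ from \sectionref{GalRepCM}, I would set
$$\Psi^{\Prime}_{n,k}(m) \coloneqq \left\{ g \in (\mathcal{O}/m\mathcal{O})^\times : \det(g-1) \in (\Z/m\Z)^\times,\ \det g \equiv k \neghs \pmod{\gcd(m,n)} \right\},$$
with $\det$ induced from $\mathcal{O} \hookrightarrow M_2(\Z)$ as in \sectionref{CountMat}. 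Setting $\delta^{\Prime}_{E/K,n,k}(m) \coloneqq |G_E(m) \cap \Psi^{\Prime}_{n,k}(m)|/|G_E(m)|$, the constant of \eqref{KoblitzAPeulerCM} is then
$$C^{\Prime}_{E/K,n,k} \coloneqq \lim_{m \to \infty} \frac{\delta^{\Prime}_{E/K,n,k}(m)}{\prod_{\ell \mid m}(1-1/\ell)},$$
the limit taken over positive integers ordered by divisibility.

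Next I would prove a CM analogue of \propref{KoblitzAPeulerproduct}, using \lemref{CMpropertyofmE} in place of \lemref{propertyofmE} to split $\delta^{\Prime}_{E/K,n,k}(m)$ multiplicatively at primes $\ell \nmid m_E$ and to absorb higher powers of $\ell \mid m_E$ into an ``$L$-factor'' exactly as in \eqref{E:Def-L}. The resulting almost Euler product would be analogous to \eqref{KobAPproduct}, with the prime-to-$nm_E$ local factors computed from \corref{countingprimeCM} together with \lemref{Kronecker}: note that such $\ell$ are automatically odd and unramified in $K$ by \eqref{ramcondi}, so \corref{countingprimeCM} applies and yields distinct closed-form Euler factors in the split and inert cases (and, in each case, a further dichotomy according to $\ell \mid k-1$ or not). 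Absolute convergence is a CM analogue of \remref{absconv}: in both split and inert cases a short calculation gives a local factor of $1 + O(1/\ell^2)$.

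The factor of $1/2$ in \eqref{expectation} is already accounted for in the paragraph preceding the conjecture: odd supersingular primes cannot be Koblitz primes since $p+1$ is even for $p>2$, so, up to $O(1)$, the Koblitz primes for $E$ in the AP $k \bmod n$ are in bijection with unordered pairs $\{\fp,\overline{\fp}\}$ of split primes of $K$ of rational-prime norm $\equiv k \pmod n$ for which $|\widetilde{E}_\fp(\F_\fp)|$ is prime. The conjectural asymptotic would then follow heuristically by applying the Chebotarev density theorem over $K$ to each $\rho_{E,m}$, summing contributions from Frobenius classes hitting $\Psi^{\Prime}_{n,k}(m)$, and dividing by $2$ to convert from primes of $K$ to rational primes. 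The hard part, and the reason the statement is a conjecture rather than a theorem, is the uniform error control needed to interchange the Chebotarev asymptotic with the limit $m \to \infty$; my proposal only ensures that the constant is well-defined and has the expected Euler-product shape.
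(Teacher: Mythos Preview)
Your overall approach matches the paper's: since the statement is a conjecture, the content is the definition of $C^{\Prime}_{E/K,n,k}$, the heuristic justification of the factor $1/2$ via Deuring's criterion, and the almost Euler product obtained from \lemref{CMpropertyofmE} and \corref{countingprimeCM}. On all of these points you are in agreement with the paper.

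There is, however, a genuine error in your convergence claim. You assert that ``in both split and inert cases a short calculation gives a local factor of $1 + O(1/\ell^2)$,'' and hence that the product converges absolutely as in \remref{absconv}. This is false. For $\ell \nmid nfm_E$, the local factor is
\[
1 - \chi_K(\ell)\,\frac{\ell^2-\ell-1}{(\ell-\chi_K(\ell))(\ell-1)^2}
= 1 - \frac{\chi_K(\ell)}{\ell} + O\!\left(\frac{1}{\ell^2}\right),
\]
so the $1/\ell$ term does \emph{not} cancel. The infinite product therefore factors as $\prod_\ell (1-\chi_K(\ell)/\ell)$ times an absolutely convergent product; the first factor is essentially $L(1,\chi_K)^{-1}$ up to finitely many Euler factors, and one must invoke the nonvanishing of $L(1,\chi_K)$ to conclude that the product converges (conditionally) to a nonzero limit. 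This is exactly the content of \remref{convCM} in the paper, and without it your constant $C^{\Prime}_{E/K,n,k}$ is not shown to be well-defined.
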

Comparing with Conjecture  \ref{KoblitzConjAP}, we have  
\begin{equation}\label{relation-CM-constants}
C^{\Prime}_{E,n,k}=\frac{C^{\Prime}_{E/K,n,k}}{2},
\end{equation}
where $C^{\Prime}_{E/K,n,k}$ is defined in (\ref{KoblitzAPoriginalCM}).

We now introduce some notation used to determine the constant $C^{\Prime}_{E/K,n,k}$. For a positive integer $m$, let us fix a $\Z/m\Z$-basis of $\mathcal{O}/m\mathcal{O}$. This allows us to view $\GL_1(\mathcal{O}/m\mathcal{O})=(\mathcal{O}/m\mathcal{O})^{\times}$  a subgroup of $\GL_2(\Z/m\Z)$. Let $\det \colon (\mathcal{O}/m\mathcal{O})^\times \to (\Z/m\Z)^\times$ be the determinant map, defined in the natural way. Fixing a standard orthogonal basis of $\mathcal{O}/m\mathcal{O}$, $N$ is identified with the determinant map. 
Thus, drawing a parallel from \eqref{Psiprimenk}, we are led to define
$$\Psi^{\Prime}_{K,n,k}(m) = \left\{g \in (\mathcal{O}/m\mathcal{O})^\times : \det(g-1) \in (\Z/m \Z)^{\times}, \det g \equiv k \neghs \pmod {\gcd(m,n)}\right\}.$$
Observe that $\rho_{E,m}(\Frob_\fp) \in G_E(m) \cap \Psi^{\Prime}_{K,n,k}(m)$ if and only if $|\widetilde{E}_\fp(\F_\fp)|$ is invertible in $\Z/m\Z$ and $\det(\rho_{E,m}(\Frob_\fp)) \equiv k \pmod {\gcd(m,n)}$. 
Hence, we are led to define
$$\delta_{E/K,n,k}^{\Prime}(m) \coloneqq \frac{\left|G_E(m) \cap \Psi^{\Prime}_{K,n,k}(m)\right|}{\left|G_E(m)\right|}.$$
Drawing a parallel from \eqref{KoblitzconstantAPoriginal}, we set  
\begin{equation}\label{KoblitzAPoriginalCM}
    C_{E/K,n,k}^{\Prime} \coloneqq \lim_{m\to \infty} \frac{\delta_{E/K,n,k}^{\Prime}(m)}{\prod_{\ell \mid m} (1-1/\ell)},
\end{equation}
where the limit is taken over all positive integers ordered by divisibility.

\begin{Lemma}\label{CMdeltaprime}
    Let $E/\Q$ be an elliptic  curve with CM by $\mathcal{O}$ of conductor $f$ in an imaginary quadratic field $K$. We denote the adelic level of $E$ by $m_E$. Let $\chi\coloneqq \chi_K$ be as given in \eqref{Kroneckersymbol}. For each rational prime $\ell \nmid fm_E$ and $\ell\nmid n$, we have
    \[
     \frac{\delta_{E/K,n,k}^{\Prime}(\ell)}{1-1/\ell}= \displaystyle 1 - \chi(\ell) \frac{\ell^2-\ell-1}{(\ell-\chi(\ell))(\ell-1)^2}.
    \]
    For each prime $\ell\nmid f m_E$ and $\ell^{\alpha} \parallel n$, we have 
    \begin{align*}
        \frac{\delta_{E/K,n,k}^{\Prime}(\ell^{\alpha})}{1-1/\ell} = \begin{cases}
            \displaystyle \frac{1}{\phi(\ell^{\alpha})}\left( 1 - \chi(\ell) \frac{1}{(\ell-\chi(\ell))(\ell-1)} \right) & \text{ if } \ell^{\alpha} \parallel n \text{ and } k \equiv 1 \neghs \pmod \ell, \\
            \displaystyle \frac{1}{\phi(\ell^{\alpha})} \left(1 - \chi(\ell) \frac{\ell+1}{(\ell-\chi(\ell))(\ell-1)}\right) & \text{ if } \ell^{\alpha} \parallel n \text{ and } k \not \equiv 1 \neghs \pmod \ell.
        \end{cases}
    \end{align*}
    \end{Lemma}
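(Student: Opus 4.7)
The plan is to compute $\delta^{\Prime}_{E/K,n,k}(\ell^\alpha) = |G_E(\ell^\alpha) \cap \Psi^{\Prime}_{K,n,k}(\ell^\alpha)|/|G_E(\ell^\alpha)|$ directly by assembling previously established counting results. Since $\ell \nmid fm_E$, Lemma \ref{CMpropertyofmE} together with the conductor hypothesis $\ell \nmid f$ yields $G_E(\ell^\alpha) \cong (\mathcal{O}_K/\ell^\alpha\mathcal{O}_K)^\times$, so Lemma \ref{Kronecker} gives the denominator $|G_E(\ell^\alpha)| = \ell^{2(\alpha-1)}(\ell-1)(\ell-\chi(\ell))$.

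For the numerator, Corollary \ref{countingprimeCM} provides the count of $g \in (\mathcal{O}_K/\ell^\alpha\mathcal{O}_K)^\times$ with $\det(g-1)$ a unit and $\det g \equiv k \pmod{\ell^\alpha}$, broken into cases according to whether $\ell$ splits or is inert in $K$ and whether $k \equiv 1 \pmod \ell$. I will handle the two regimes $\ell \nmid n$ and $\ell^\alpha \parallel n$ of the lemma separately.

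When $\ell^\alpha \parallel n$, the condition $\det g \equiv k \pmod{\gcd(\ell^\alpha,n)}$ reads $\det g \equiv k \pmod{\ell^\alpha}$, so Corollary \ref{countingprimeCM} applies verbatim. I substitute the four possible counts into the ratio, divide by $(1-1/\ell)$, and observe that in each of the two $(k \bmod \ell)$ subcases the split ($\chi(\ell)=1$) and inert ($\chi(\ell)=-1$) expressions collapse to a single formula in $\chi(\ell)$, giving the claimed result.

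When $\ell \nmid n$, the determinant congruence is vacuous, so the numerator is the sum over $k' \in (\Z/\ell^\alpha\Z)^\times$ of the counts from Corollary \ref{countingprimeCM}. I will split this sum into the $\ell^{\alpha-1}$ residues with $k' \equiv 1 \pmod \ell$ and the $\ell^{\alpha-1}(\ell-2)$ residues with $k' \not\equiv 1 \pmod \ell$, add, and divide by the denominator and by $(1-1/\ell)$. The only real obstacle is the algebraic bookkeeping needed to merge the split and inert results into a single $\chi(\ell)$-dependent expression, which reduces to verifying identities such as $(\ell-1)^3 - (\ell^2-\ell-1) = \ell(\ell-2)^2$ (split case) and $(\ell+1)(\ell-1)^2 + (\ell^2-\ell-1) = \ell(\ell^2-2)$ (inert case); both then unify into $1 - \chi(\ell)(\ell^2-\ell-1)/[(\ell-\chi(\ell))(\ell-1)^2]$, matching the statement.
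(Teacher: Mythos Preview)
Your proposal is correct and follows essentially the same route as the paper: both invoke Lemma~\ref{CMpropertyofmE} to identify $G_E(\ell^\alpha)$ with $(\mathcal{O}_K/\ell^\alpha\mathcal{O}_K)^\times$, read off $|G_E(\ell^\alpha)|$ from Lemma~\ref{Kronecker}, and obtain the numerator from Corollary~\ref{countingprimeCM}, splitting into the split/inert and $k\equiv 1$/$k\not\equiv 1$ subcases. The only cosmetic difference is that for $\ell\nmid n$ the statement concerns $\delta_{E/K,n,k}^{\Prime}(\ell)$, so the paper works directly at level $\ell$ and records the totals $(\ell-2)^2$ and $\ell^2-2$; your description at level $\ell^\alpha$ with the sum over $k'\in(\Z/\ell^\alpha\Z)^\times$ is a harmless overgeneralization that collapses to the same computation when $\alpha=1$.
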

    \begin{proof}
        First, we consider the case where  $\ell \nmid nfm_E$. By \lemref{CMpropertyofmE}, we have $G_E(\ell)\simeq (\mathcal{O}_K/\ell\mathcal{O}_K)^{\times}$ and the  condition $\det g\equiv k \pmod {\gcd(\ell, n)}$ trivially holds. 
        Hence 
        \[
        G_E(\ell)\cap \Psi_{K,n,k}^{\Prime}(\ell)=\{g\in (\mathcal{O}_K/\ell\mathcal{O}_K)^\times: \det(g-1)\not\equiv 0 \neghs \pmod \ell \}.
        \]
        Therefore, by Corollary \ref{countingprimeCM}, we get 
        \[
         |\Psi_{K,n,k}^{\Prime}(\ell)|=(\ell-2)^2 \; \text{ or } \;
        |\Psi_{K,n,k}^{\Prime}(\ell)|=\ell^2-2 
        \]
       depending on whether $\ell$ splits or is inert in $K$.

        Now we assume $\ell^{\alpha}\parallel n$. Similarly, we have  $G_E(\ell^{\alpha})\simeq (\mathcal{O}_K/\ell^{\alpha}\mathcal{O}_K)^{\times}$ and hence 
        \[
        G_E(\ell^{\alpha})\cap \Psi_{K,n,k}^{\Prime}(\ell^{\alpha})=\Psi_{K,n,k}^{\Prime}(\ell^{\alpha}).
        \]
        Then the  condition $\det g\equiv k \pmod {\gcd(\ell^{\alpha}, n)}$ becomes $\det g\equiv k \pmod {\ell^{\alpha}}$. So we get 
          \[
        \Psi_{K,n,k}^{\Prime}(\ell^{\alpha})=\left\{ g \in (\mathcal{O}_K/\ell^a\mathcal{O}_K)^\times : \det(g-1) \not \equiv 0 \neghs \pmod {\ell}, \det g \equiv k \neghs \pmod {\ell^a}  \right\}.
        \]
       If $k\equiv 1\pmod \ell$, then by \corref{countingprimeCM}, 
        \[
        |\Psi_{K,n,k}^{\Prime}(\ell^{\alpha})|= \ell^{a-1}(\ell-2) \; \text{ or }  \; |\Psi_{K,n,k}^{\Prime}(\ell^{\alpha})|= \ell^{a}
        \]
        depending on whether $\ell$ splits or is inert in $K$. If  $k\not \equiv 1\pmod \ell$, then 
        \[
        |\Psi_{K,n,k}^{\Prime}(\ell^{\alpha})|= \ell^{a-1}(\ell-3) \; \text{ or }  \; |\Psi_{K,n,k}^{\Prime}(\ell^{\alpha})|= \ell^{a-1}(\ell+1),
        \]
        depending on whether $\ell$ splits or is inert in $K$.
    \end{proof}

For a CM elliptic curve $E/\Q$ with CM  by $\mathcal{O}$ of conductor $f$, we set
    \begin{equation}\label{CM-L}
        L \coloneqq \prod_{\ell \mid fm_E} \ell^{\alpha_\ell}, \hs \text{ where } \alpha_\ell = \begin{cases}
        v_\ell(n) & \text{ if } \ell \mid n, \\
        1 & \text{ otherwise}.
    \end{cases}
    \end{equation}
To save notation, we will write $\ell^{\alpha}$ instead of $\ell^{\alpha_\ell}$.
    
\begin{Proposition}\label{KoblitzAPCM}
    Let $E/\Q$ have a CM by $\mathcal{O}$ of conductor $f$ in an imaginary quadratic field $K$.  Let $\chi\coloneqq \chi_K$ be as given in \eqref{Kroneckersymbol}.  Let $m_E$ denote the adelic level of $E$. Let $L$ be defined as in (\ref{CM-L}). Fix a positive integer $n$. 
Then, $\delta^{\Prime}_{E/K,n,k}(\cdot)$, as an arithmetic function, satisfies the following properties:
\begin{enumerate}
    \item Let $L \mid L' \mid L^\infty$. Then, $\delta^{\Prime}_{E/K,n,k}(L) = \delta_{E/K,n,k}^{\Prime}(L')$;
    \item Let $\ell^\alpha$ be a prime power and $d$ be a positive integer with $(\ell, Ld) = 1$. Then, $\delta^{\Prime}_{E/K,n,k}(d\ell^\alpha) = \delta^{\Prime}_{E/K,n,k}(d) \cdot \delta^{\Prime}_{E/K,n,k}(\ell^\alpha)$.
    \item Let $\ell^\alpha \parallel n$ and $(\ell, L) = 1$. Then, for any $\beta \geq \alpha$, $\delta_{E/K,n,k}^{\Prime}(\ell^\beta) = \delta_{E/K,n,k}^{\Prime}(\ell^\alpha)$. Further, if $\ell \nmid nL$, we have $\delta_{E/K,n,k}^{\Prime}(\ell^\beta) = \delta_{E/K,n,k}^{\Prime}(\ell)$.
\end{enumerate}
Therefore, \eqref{KoblitzAPoriginalCM} can be expressed as

\begin{equation}\label{KoblitzAPeulerCM}
    C^{\Prime}_{E/K,n,k} = \frac{\delta_{E/K,n,k}^{\Prime}(L)}{\prod_{\ell \mid L} (1-1/\ell)} \cdot \prod_{\substack{\ell \nmid fm_E \\ \ell^{\alpha} \parallel n}}\frac{\delta_{E/K,n,k}^{\Prime}(\ell^{\alpha})}{1-1/\ell} \cdot \prod_{\substack{ \ell \nmid nfm_E}} \left(1 - \chi(\ell)\frac{\ell^2-\ell-1}{(\ell-\chi(\ell))(\ell-1)^2} \right).
\end{equation}
\end{Proposition}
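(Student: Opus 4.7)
The plan is to mirror the proof of Proposition \ref{KoblitzAPeulerproduct} essentially verbatim, replacing the role of $\GL_2(\Z/m\Z)$ with $(\mathcal{O}/m\mathcal{O})^\times$ and Lemma \ref{propertyofmE} with Lemma \ref{CMpropertyofmE}. The three structural properties should be established in turn; each reduces to a statement that the defining conditions for $\Psi^{\Prime}_{K,n,k}$, namely $\det(g-1) \in (\Z/m\Z)^\times$ and $\det g \equiv k \pmod{\gcd(m,n)}$, are compatible with the relevant reduction maps on $(\mathcal{O}/m\mathcal{O})^\times$. Note that the determinant map $(\mathcal{O}/m\mathcal{O})^\times \to (\Z/m\Z)^\times$ commutes with reduction modulo divisors of $m$ (after compatible choice of $\Z$-basis), so these conditions lift and descend in the expected way.

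For property (1), I would take the natural surjective reduction $\varpi : G_E(L') \twoheadrightarrow G_E(L)$ and verify that $\varpi^{-1}(G_E(L) \cap \Psi^{\Prime}_{K,n,k}(L)) = G_E(L') \cap \Psi^{\Prime}_{K,n,k}(L')$. Since $L$ and $L'$ have the same prime support, invertibility of $\det(g-1)$ is preserved in both directions under $\varpi$; and since $\gcd(n,L) = \gcd(n,L')$, the determinant congruence transfers as well. Taking cardinalities gives $\delta^{\Prime}_{E/K,n,k}(L') = \delta^{\Prime}_{E/K,n,k}(L)$.

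For property (2), Lemma \ref{CMpropertyofmE} yields the isomorphism $G_E(d\ell^\alpha) \simeq G_E(d) \times (\mathcal{O}/\ell^\alpha\mathcal{O})^\times$ via the Chinese remainder theorem. I would show that this isomorphism induces a bijection between $G_E(d\ell^\alpha) \cap \Psi^{\Prime}_{K,n,k}(d\ell^\alpha)$ and $(G_E(d) \cap \Psi^{\Prime}_{K,n,k}(d)) \times ((\mathcal{O}/\ell^\alpha\mathcal{O})^\times \cap \Psi^{\Prime}_{K,n,k}(\ell^\alpha))$. The forward direction uses that reductions preserve the conditions; the reverse direction uses CRT to reassemble a unique lift whose determinant conditions modulo $d$ and modulo $\ell^\alpha$ together give the condition modulo $d\ell^\alpha$. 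Property (3) then follows by the same reduction-map argument as in (1), once we observe that when $\ell \nmid nL$ the determinant congruence $\det g \equiv k \pmod{\gcd(n,\ell)}$ becomes trivial, so $\Psi^{\Prime}_{K,n,k}(\ell)$ coincides with the analogous unrestricted set.

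With (1)--(3) in hand, grouping the prime factors of $m$ in the limit \eqref{KoblitzAPoriginalCM} according to whether they divide $L$, divide $n$ but not $L$, or divide neither, yields the displayed product \eqref{KoblitzAPeulerCM}; the Euler factor at primes $\ell \nmid nfm_E$ is read off from the first line of Lemma \ref{CMdeltaprime}. Absolute convergence of the infinite product is immediate from that same lemma, since each Euler factor is $1 + O(\ell^{-2})$. The only mild obstacle I anticipate is bookkeeping the determinant map on $(\mathcal{O}/m\mathcal{O})^\times$ carefully enough to justify the CRT decomposition of the determinant condition; everything else is a direct transcription of the non-CM arguments from Proposition \ref{KoblitzAPeulerproduct}.
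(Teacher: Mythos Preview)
Your approach is exactly the paper's: it simply says to rerun the proof of Proposition~\ref{KoblitzAPeulerproduct} with $m_E$ replaced by $fm_E$ and $\GL_2(\Z/\ell^\alpha\Z)$ replaced by $(\mathcal{O}/\ell^\alpha\mathcal{O})^\times$, invoking Lemma~\ref{CMpropertyofmE} in place of Lemma~\ref{propertyofmE}, and then reading off the tail Euler factors from Lemma~\ref{CMdeltaprime}. Properties (1)--(3) and the derivation of \eqref{KoblitzAPeulerCM} are handled correctly in your sketch.

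There is one genuine error, however, in your final sentence. The Euler factor at $\ell \nmid nfm_E$ is
\[
1 - \chi(\ell)\,\frac{\ell^2-\ell-1}{(\ell-\chi(\ell))(\ell-1)^2} = 1 - \frac{\chi(\ell)}{\ell} + O\!\left(\frac{1}{\ell^2}\right),
\]
which is \emph{not} $1+O(\ell^{-2})$; the infinite product is therefore not absolutely convergent. As the paper explains in Remark~\ref{convCM}, convergence is only conditional and follows by recognizing the product as (a finite correction of) the Euler product for $L(1,\chi_K)^{-1}$, which converges because $\chi_K$ is nontrivial and $L(1,\chi_K)\neq 0$. Fortunately the proposition itself does not assert absolute convergence, so this does not invalidate your proof of the stated result---but the justification you gave for convergence should be replaced by the $L$-function argument.
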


\begin{proof}
One can prove  (1)-(3) following the same strategy as in the proof of Proposition \ref{KoblitzAPeulerproduct}. One only needs to replace $m_E$ by  $fm_E$ and $\GL_2(\Z/\ell^{\alpha}\Z)$ by $(\mathcal{O}/\ell^{\alpha}\mathcal{O})^\times$. 
Therefore, from these results, we get
\begin{align*}
    C^{\Prime}_{E/K,n,k} 
    = & \frac{\delta_{E/K,n,k}^{\Prime}(L)}{\prod_{\ell \mid L} (1-1/\ell)} \cdot \prod_{\substack{\ell \nmid fm_E \\ \ell^{\alpha} \parallel n}}\frac{\delta_{E/K,n,k}^{\Prime}(\ell^{\alpha})}{1-1/\ell} \cdot \prod_{\substack{\ell \nmid nfm_E}} \frac{\delta_{E/K,n,k}^{\Prime}(\ell)}{1-1/\ell}.
\end{align*}
Now, we see that (\ref{KoblitzAPeulerCM}) follows from Lemma \ref{CMdeltaprime}.
\end{proof}

\begin{Remark}\label{convCM}
    Given that $\ell \nmid nfm_E$, we observe that
\begin{align*}
     \frac{\delta^{\Prime}_{E/K,n,k}(\ell)}{1-1/\ell} &= 1 - \chi_K(\ell) \frac{\ell^2-\ell-1}{(\ell-\chi_K(\ell))(\ell-1)^2} \\
     &= \left( 1 -  \frac{\chi_K(\ell)}{\ell} + \bigO\left(\frac{1}{\ell^2}\right)\right) \\
     &= \left(1 - \frac{\chi_K(\ell)}{\ell}\right)\left(1+\bigO\left(\frac{1}{\ell^2}\right)\right).
\end{align*}
Thus, we have
\begin{equation*}
\prod_{\substack{\ell \nmid fm_En}} \left( 1 - \chi_K(\ell) \frac{\ell^2-\ell-1}{(\ell-\chi_K(\ell))(\ell-1)^2}\right) = \prod_{\substack{ \ell \nmid fm_En}} \left(1-\frac{\chi_K(\ell)}{\ell}\right)\left(1+\bigO\left(\frac{1}{\ell^2}\right)\right).
\end{equation*}
Note that this is a product of an Euler factorization of $L(s,\chi_K)^{-1}$ at $s = 1$ (with some correction factor) and an absolutely convergent product. Since $L(1,\chi_K)$ converges to a non-zero number for a non-principal character $\chi_K$, the infinite product in \eqref{KoblitzAPeulerCM} is conditionally convergent.
\end{Remark}

By \eqref{expectation}, \eqref{relation-CM-constants},  \lemref{CMdeltaprime}, and \propref{KoblitzAPCM} we can explicitly formulate the conjectural Koblitz constant for CM elliptic curves. Let $n = n_1n_2$ where $n_1 \mid (fm_E)^\infty$ and $(n_2,fm_E) = 1$. (In particular, $n_2$ is the product of $\ell^{\alpha}$ for which $\ell^{\alpha} \parallel n$ with $\ell \nmid L$.) We have
\begin{equation}\label{lengthyproduct}
\begin{split}
    C_{E,n,k}^{\Prime} = \frac{1}{2} \cdot \frac{1}{\phi(n_2)} \cdot \frac{\delta_{E/K, n,k}^{\Prime}(L)}{\prod_{\ell \mid L}(1-1/\ell)} & \cdot \prod_{\substack{\ell^{\alpha} \parallel n \\ \ell \nmid L \\ \ell\mid k-1}} \left( 1 - \chi_K(\ell) \frac{\ell}{(\ell-\chi_K(\ell))(\ell-1)}\right) \\
    &\cdot \prod_{\substack{\ell^{\alpha} \parallel n \\ \ell \nmid L \\ \ell\nmid k-1}} \left(1-\chi_K(\ell) \frac{\ell+1}{(\ell-\chi_K(\ell))(\ell-1)}\right) \\
    &\cdot  \prod_{\ell \nmid nfm_E}\left(1-\chi_K(\ell)\frac{\ell^2-\ell-1}{(\ell-\chi_K(\ell))(\ell-1)^2}\right). 
\end{split}
\end{equation}

\begin{Proposition}\label{CMupperbound}
 For any CM elliptic curve  $E/\Q$, we have
    $$C^{\Prime}_{E,n,k} \ll_n 1.$$
\end{Proposition}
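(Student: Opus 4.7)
The plan is to bound each of the four factors in the explicit formula \eqref{lengthyproduct} for $C^{\Prime}_{E,n,k}$, using Proposition \ref{uniform-CM} to control the dependence on $E$.

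First, by Proposition \ref{uniform-CM}, there is an absolute constant $C$ with $fm_E \leq C$ for every CM elliptic curve $E/\Q$. In particular, the integer $L$ defined in \eqref{CM-L} divides $Cn$, so the number and size of its prime factors are bounded purely in terms of $n$. This immediately bounds the finite factor
\[
\frac{\delta_{E/K,n,k}^{\Prime}(L)}{\prod_{\ell \mid L}(1-1/\ell)} \leq \prod_{\ell \mid L}\Bigl(1-\frac{1}{\ell}\Bigr)^{-1} \ll_{n} 1,
\]
since the numerator is at most $1$ and there are only finitely many possible values of $L$ once $n$ is fixed.

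Next I would handle the middle products over primes $\ell^{\alpha} \parallel n$ with $\ell \nmid L$. By \lemref{CMdeltaprime}, each such factor is of the form $1 - \chi_K(\ell)\cdot O(1/\ell)$ and is bounded by an absolute constant; since there are at most $\omega(n)$ such factors, this part contributes $\ll_{n} 1$. Combined with the $1/\phi(n_2)$ prefactor, everything except the final infinite product is seen to be $\ll_n 1$ uniformly in $E$.

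The main step is the infinite product
\[
\prod_{\ell \nmid nfm_E}\Bigl(1-\chi_K(\ell)\frac{\ell^2-\ell-1}{(\ell-\chi_K(\ell))(\ell-1)^2}\Bigr).
\]
By \remref{convCM}, this equals
\[
\prod_{\ell \nmid nfm_E}\Bigl(1-\frac{\chi_K(\ell)}{\ell}\Bigr) \cdot \prod_{\ell \nmid nfm_E}\Bigl(1+O\Bigl(\frac{1}{\ell^2}\Bigr)\Bigr),
\]
where the second factor converges absolutely and is bounded by an absolute constant. The first factor is
\[
L(1,\chi_K)^{-1}\prod_{\ell \mid nfm_E}\Bigl(1-\frac{\chi_K(\ell)}{\ell}\Bigr)^{-1},
\]
and the correction over $\ell \mid nfm_E$ is $\ll_n 1$ since $nfm_E$ is bounded in terms of $n$. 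The key observation is that, because $fm_E \leq C$ uniformly, there are only finitely many imaginary quadratic fields $K$ that arise as CM fields of elliptic curves over $\Q$, so $\chi_K$ ranges over a finite set of non-principal Dirichlet characters. Hence $L(1,\chi_K)$ is bounded below by a positive absolute constant, and the last product is $O(1)$. The main (modest) obstacle is just keeping track of which quantities are truly absolute and which are allowed to depend on $n$; combining all bounds yields $C^{\Prime}_{E,n,k} \ll_n 1$.
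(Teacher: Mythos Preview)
Your proof is correct and follows essentially the same approach as the paper: bound the finite factors using $\delta^{\Prime}_{E/K,n,k}(L)\le 1$ together with Proposition~\ref{uniform-CM}, and bound the infinite product via Remark~\ref{convCM} and the fact that only finitely many CM fields $K$ occur over $\Q$. One small imprecision: $L$ need not literally \emph{divide} $Cn$, but what you actually use---that $\rad(L)=\rad(fm_E)$ is absolutely bounded---is correct and suffices.
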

\begin{proof}
    Note that the finite product terms in \eqref{lengthyproduct} are all bounded by $1$. By definition, we have $$\delta_{E/K,n,k}^{\Prime}(L) \leq 1,$$ and hence,
    $$\frac{\delta_{E/K,n,k}^{\Prime}(L)}{\prod_{\ell \mid L}(1-1/\ell)} \ll \max\{1, \log \log \rad(fm_E)\} \ll 1,$$
 by \propref{easylemma} and \propref{uniform-CM}. Finally, the infinite product, up to a correction factor depending on $n$, is universally bounded, since there are only finitely many possibilities for $K$.
\end{proof}

\section{Moments} \label{Moments}

The goal of this section is to complete the proof of Theorem \ref{Main}. We begin by setting forth the general strategy. Let $x > 0$ and $A = A(x)$ and $B = B(x)$ be positive real-valued functions such that $A(x) \to \infty$ and $B(x) \to \infty$ as $x \to \infty$. Let $\Eab$ be an elliptic curve given by the model
$$\Eab \colon Y^2= X^3+aX+b,$$
for some $a,b \in \Z$ and $4a^3+27b^2 \neq 0$. Define
$$\mathcal{F} \coloneqq \mathcal{F}(x) = \left\{ \Eab \colon |a| \leq A, |b| \leq B \right\}.$$
Our objective is to compute, for any positive integer $t$, the $t$-th moment
\begin{equation}\label{tmoment}
    \frac{1}{|\mathcal{F}|} \sum_{E \in \mathcal{F}}\left|C^{\mathcal{X}}_{E,n,k} - C^{\mathcal{X}}_{n,k}\right|^t,
\end{equation}
where $\mathcal{X}$ denotes either ``$\Cyc$" or ``$\Prime$". We know that \eqref{tmoment} can be expressed as
\begin{align*}
    \frac{1}{\mathcal{|F|}} \left( \sum_{\substack{E \in \mathcal{F} \\ E \text{ is Serre}}} \left|C^{\mathcal{X}}_{E,n,k} - C^{\mathcal{X}}_{n,k}\right|^t + \sum_{\substack{E \in \mathcal{F} \\ E \text{ is non-CM} \nonumber \\
    E \text{ is non-Serre}}} \left|C^{\mathcal{X}}_{E,n,k} - C^{\mathcal{X}}_{n,k}\right|^t + \sum_{\substack{E \in \mathcal{F} \\ E \text{ is CM}}} \left|C^{\mathcal{X}}_{E,n,k} - C^{\mathcal{X}}_{n,k}\right|^t \right), 
\end{align*}
where ``$E$ is Serre'' indicates that ``$E$ is a Serre curve'', etc. In order to bound \eqref{tmoment}, we are going to bound each of the three sums separately.

For the first sum, recall that we proved explicit formulas for the constants $C^{\mathcal{X}}_{E,n,k}$ for Serre curves in \sectionref{CycKobConSeCur} and found that these constants closely align with their average counterparts $C^{\mathcal{X}}_{n,k}$. For the second and third sums, we will use the fact due to Jones \cite{MR2563740} that non-Serre curves are rare. For the cyclicity case, we will use the fact that $C^{\Cyc}_{E,n,k}$ is bounded above by $1/\phi(n)$, which follows from \eqref{goodidea} (and is sensible, since under GRH, $C^{\Cyc}_{E,n,k}$ describes the density of some subset of the primes congruent to $k$ modulo $n$). However, for the Koblitz case it is not clear that $C^{\Prime}_{E,n,k}$ should be bounded by $1/\phi(n)$, so we will instead employ the bounds of  \propref{nonSerre} and \propref{CMupperbound}.

We first deal with the moments computation for Serre curves.
Let $\Eab/\Q$ be a Serre curve defined by the model 
$$\Eab \colon Y^2 = X^3 + aX + b,$$
of adelic level $m_{\Eab}$. Let $\Delta_{a,b}'$ denote the squarefree part of the discriminant of $\Eab$. Recall that $m_{\Eab}$ is only supported by $2$ and the prime factors of $\Delta'_{a,b}$ (see Proposition \eqref{mEofSerreCurves}). Set 
$$L_{\Eab} = \frac{|\Delta'_{a,b}|}{\gcd(|\Delta'_{a,b}|,n)}.$$ 
By \thmref{serre-constants}, we have
\begin{align*}
    \left|C^{\Cyc}_{\Eab,n,k} - C^{\Cyc}_{n,k}\right| &\leq \frac{1}{5} C^{\Cyc}_{n,k} \prod_{\substack{\ell \mid m_{\Eab} \\ \ell \nmid 2n}} \frac{1}{\ell^4-\ell^3-\ell^2+\ell-1} \ll \frac{1}{\rad(m_{\Eab})^3} \ll \frac{1}{L_{\Eab}^3}, \\
     \left|C^{\Prime}_{\Eab,n,k} - C^{\Prime}_{n,k}\right| &\leq C^{\Prime}_{n,k} \prod_{\substack{\ell \mid m_{\Eab} \\ \ell \nmid 2n}} \frac{1}{\ell^3-2\ell^2-\ell+3} \ll \frac{1}{\rad(m_{\Eab})^2} \ll \frac{1}{L_{\Eab}^2}.
\end{align*}
Let us set $r_{\Cyc} = 3$ and $r_{\Prime} = 2$. Then, we obtain
$$\left|C^{\mathcal{X}}_{\Eab,n,k} - C^{\mathcal{X}}_{n,k}\right| \ll \frac{1}{L_{\Eab}^{r_\mathcal{X}}} = \left(\frac{\gcd\left(|\Delta'_{a,b}|,n\right)}{|\Delta'_{a,b}|}\right)^{r_\mathcal{X}},$$
given that $\Eab/\Q$ is a Serre curve.

Observing that  $|\mathcal{F}| \sim 4AB$ as $x\to \infty$, we have  for any $A,B,Z \geq 2$ and $t\geq 1$,
\begin{align}\label{set0}
    \frac{1}{|\mathcal{F}|} \sum_{\substack{E \in \mathcal{F} \\ E \text{ is Serre}}} \left|C^{\mathcal{X}}_{E,n,k} - C^{\mathcal{X}}_{n,k}\right|^t \ll \frac{1}{AB} \sum_{\substack{|a|\leq A \\ |b| \leq B\\ \Delta'_{a,b} \neq 0 \\ \frac{\left|\Delta'_{a,b}\right|}{\left(\left|\Delta'_{a,b}\right|,n\right)} < Z}} 1 + \frac{1}{AB} \sum_{\substack{|a|\leq A \\ |b| \leq B\\ \Delta'_{a,b} \neq 0 \\ \frac{\left|\Delta'_{a,b}\right|}{\left(\left|\Delta'_{a,b}\right|,n\right)} \geq Z}} \frac{1}{Z^{r_\mathcal{X}t}}.
\end{align}
\begin{Lemma}\label{Joneslemma}
    With the notation above, we have
    $$\sum_{\substack{|a| \leq A \\ |b| \leq B \\ \Delta'_{a,b} \neq 0 \\ |\Delta'_{a,b}| < Z}} 1 \ll n\log B \cdot A \cdot \log^7 A \cdot Z + B.$$
\end{Lemma}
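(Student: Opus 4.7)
The plan is to adapt Jones's counting argument from \cite{MR2534114}, which yields a bound of exactly this shape for $n = 1$; the extra factor of $n$ in the displayed inequality is not intrinsic to the counting but is inserted to absorb a factor arising in the downstream application \eqref{set0}. The starting observation is that $\Delta_{a,b} = -16(4a^3+27b^2)$, so $|\Delta'_{a,b}| < Z$ is equivalent to requiring $-16(4a^3 + 27b^2) = d s^2$ for some squarefree integer $d$ with $|d| < Z$ and some integer $s$.

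First I would split off the boundary contribution from $a = 0$: here $\Delta_{0,b} = -432 b^2 = -3 \cdot (12b)^2$, so $\Delta'_{0,b} = -3$ for every $b \ne 0$, which automatically satisfies the constraint once $Z \ge 4$. This produces exactly the $+B$ term. The case $b=0$ contributes only $\ll \sqrt{AZ}$ and is absorbed. For the main count with $a \ne 0$, I would stratify by the squarefree value $d$ of $\Delta'_{a,b}$ with $|d| < Z$, and for each such $d$ bound the number of admissible $(a, b)$. Following Jones, the count for each $d$ should be $\ll A \log^7 A \cdot \log B$: the $A$ ranges over $a$, the $\log B$ comes from counting admissible $b$'s in the allowed range, and the $\log^7 A$ arises from a divisor-type estimate controlling integer points on the Mordell-type relations $432 b^2 + d s^2 = -64 a^3$. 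Summing over the $\ll Z$ squarefree $d$'s then yields the $A \log^7 A \log B \cdot Z$ main term.

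Regarding the extra $n$: the downstream application in \eqref{set0} uses the condition $|\Delta'_{a,b}|/(|\Delta'_{a,b}|, n) < Z$ rather than $|\Delta'_{a,b}| < Z$; since $(|\Delta'_{a,b}|, n) \le n$, the former set is contained in $\{|\Delta'_{a,b}| < nZ\}$. Applying the bound just described with $Z$ replaced by $nZ$ is precisely what produces the $n$ in front, and the stated lemma is the form most convenient for that application. The main obstacle is reproducing Jones's divisor-sum/integer-point estimate that extracts the uniform $\log^7 A$ factor; all other steps are elementary bookkeeping.
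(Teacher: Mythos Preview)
Your proposal is correct and follows the same approach as the paper, which simply cites \cite[Section 4.2]{MR2534114}; your outline of the $a=0$ split, the stratification by the squarefree value $d$, and the appeal to Jones's divisor/integer-point estimate is in fact more detailed than the paper's one-line deferral. Your diagnosis of the extraneous $n$ is also right: as the lemma is literally stated (with condition $|\Delta'_{a,b}| < Z$) the factor $n$ is superfluous, and it appears only because the application in \eqref{set0} uses the condition $|\Delta'_{a,b}|/(|\Delta'_{a,b}|,n) < Z$, which one handles exactly as you say by invoking Jones's bound with $Z$ replaced by $nZ$.
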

\begin{proof}
    It follows similarly to the argument given in \cite[Section 4.2]{MR2534114}.
\end{proof}
Let $Z = \left(B/n \log B \log^7A \right)^{1/(r_\mathcal{X}t+1)}$. By \eqref{set0} and \lemref{Joneslemma}, we see that
\begin{equation}\label{Serrecase}
    \begin{split}
            \frac{1}{|\mathcal{F}|} \sum_{\substack{E \in \mathcal{F} \\ E \text{ is Serre}}} \left|C^{\mathcal{X}}_{E,n,k} - C^{\mathcal{X}}_{n,k}\right|^t \ll \left(\frac{1}{A} + \frac{n Z \log B \log^7 A}{B}\right) + \frac{1}{Z^{r_\mathcal{X}t}} \ll \left(\frac{n \log B \log^7A}{B} \right)^{\frac{r_\mathcal{X}t}{r_\mathcal{X}t+1}}.
    \end{split}
\end{equation}
By \cite[Theorem 25]{MR2534114} and \eqref{Serrecase}, there exists $\gamma > 0$ such that for any positive integer $t$,
$$\frac{1}{\mathcal{|F|}} \sum_{E \in \mathcal{F}}\left|C^{\Cyc}_{E,n,k}-C^{\Cyc}_{n,k}\right|^t \ll_t \max \left\{\left(\frac{n\log B \log^7A}{B}\right)^{\frac{3t}{3t+1}}, \frac{\log^\gamma(\min \{A,B\})}{\sqrt{\min\{A,B\}}}  \right\}.$$
This completes the proof for the cyclicity case. 

For primes of Koblitz reduction, by \propref{nonSerre}, \propref{CMupperbound}, and \cite[Theorem 25]{MR2534114}, there exists $\gamma > 0$ such that for any positive integer $t$,
\begin{align*}
    \frac{1}{|\mathcal{F}|}  \sum_{\substack{E \in \mathcal{F} \\ E \text{ is non-CM } \\ E \text{ is non-Serre}}} \left|C_{E,n,k}^{\Prime} - C_{n,k}^{\Prime}\right|^t &\ll_t \log \log (\max\{A^3,B^2\})^t \frac{\log^\gamma(\min\{A,B\})}{\sqrt{\min\{A,B\}}},\\
    \frac{1}{|\mathcal{F}|} \sum_{\substack{E \in \mathcal{F} \\ E \text{ is CM}}} \left| C^{\Prime}_{E,n,k} - C^{\Prime}_{n,k} \right|^t &\ll_{t,n} \frac{\log^\gamma(\min\{A,B\})}{\sqrt{\min\{A,B\}}}.
\end{align*}
Therefore, we obtain the inequality claimed in the statement of \thmref{Main}.

\section{Numerical examples} \label{NumEx}

\subsection{Example 1} Let $E$ be the elliptic curve with LMFDB \cite{lmfdb} label \texttt{1728.w1}, which is given by 
\[ E \colon y^2 = x^3 + 6x - 2. \]
From the curve's LMFDB page, we note that it is a Serre curve with adelic level $m_E = 6$. Zywina \cite[Section 5]{MR2805578} computed the Koblitz constant of $E$,
\[
C_E^{\Prime} \approx 0.561296.
\]
Running either our Magma functions \texttt{KoblitzAP} or  \texttt{SerreCurveKoblitzAP} \cite{LMW-GitHub} on $E$ with modulus $n = 6$, we find that
\[
C_{E,6,1}^{\Prime} = C_E^{\Prime}
\quad \text{and} \quad
C_{E,6,5}^{\Prime} = 0.
\]
This result can be verified ``manually'' by studying the mod $6$ Galois image of $E$, as we now discuss. 

The mod $6$ Galois image $G_E(6)$ is the index $2$ subgroup of $\GL_2(\Z/6\Z)$ generated by
\[
G_E(6) = 
\left\langle
\begin{pmatrix}
    1 & 1 \\ 0 & 5
\end{pmatrix},
\begin{pmatrix}
    1 & 0 \\ 5 & 5
\end{pmatrix},
\begin{pmatrix}
    5 & 0 \\ 5 & 1
\end{pmatrix}
\right\rangle.
\]
From this description, we compute that
\[
 \left\{\tr(M) : M \in G_E(6) \text{ and } \det(M) \equiv 5 \neghs \pmod{6} \right\} = \left\{ 0, 2, 4 \right\}.
\]
Thus, if $p$ is a good prime for $E$ that is congruent to $5$ modulo $6$, then
\[
|\widetilde{E}_p(\F_p)| \equiv p + 1 - \tr(\rho_{E,6}(\Frob_p)) \equiv 1 + 1 - 0 \equiv 0 \pmod{2}.
\]
Hence $|\widetilde{E}_p(\F_p)|$ is even for all good primes $p$ congruent to $5$ modulo $6$. By Hasse's bound and computing a few values of $|\widetilde{E}_p(\F_p)|$, we find that $|\widetilde{E}_p(\F_p)|$ is never $2$ for such a prime $p$. Thus, the only good primes $p$ for which $|\widetilde{E}_p(\F_p)|$  is prime are congruent to $1$ modulo $6$.

\subsection{Example 2} Let $E$ be the elliptic curve with LMFDB label \texttt{200.e1}, which is given by 
\[
E \colon y^2=x^3+5x-10.
\]
From this curve's LMFDB page, we learn that $E$ has adelic index $2$ (i.e., $E$ is a Serre curve) and adelic level $m_E = 8$. Running our Magma function \texttt{SerreCurveKoblitzAP} on $E$ with $n = 8$, we find that
\begin{align*}
    C_{E,8,1}^{\Prime} = C_{E,8,3}^{\Prime} = \tfrac{1}{2} C_E^{\Prime} \quad \text{and} \quad C_{E,8,5}^{\Prime} = C_{E,8,7}^{\Prime} = 0
\end{align*}
where
\[
C_E^{\Prime} \approx 0.505166.
\]
Running our Magma function \texttt{SerreCurveCyclicityAP} on $E$ with $n = 8$, we find that
\begin{align*}
    C_{E,8,1}^{\Cyc} = C_{E,8,3}^{\Cyc} = \tfrac{1}{5} C_E^{\Cyc} \quad \text{and} \quad C_{E,8,5}^{\Cyc} = C_{E,8,7}^{\Cyc} = \tfrac{3}{10} C_E^{\Cyc}.
\end{align*}
where
\[
C_E^{\Cyc} \approx 0.813752.
\]

The values obtained above align well with numerical data for the curve. Among all primes of Koblitz reduction for $E$ up to $10^7$,  $11114$ are congruent to $1$ modulo $8$ and $11259$ are congruent to $3$ modulo $8$; none are congruent to $5$ or $7$ modulo $8$. Among all primes of cyclic reduction for $E$ up to $10^7$, $108096$ are congruent to $1$ modulo $8$, $108251$ are congruent to $3$ modulo $8$, $162234$ are congruent to $5$ modulo $8$, and $162286$ are congruent to $7$ modulo $8$.

\subsection{Example 3}

Let $E$ be the elliptic curve with LMFDB label \texttt{864.a1}, which is given by 
\[ E \colon y^2=x^3-216x-1296. \]
This curve does not have complex multiplication and is not a Serre curve. Its adelic index is $24$ and adelic level is $m_E = 12$. Running our Magma function \texttt{KoblitzAP} on $E$ with $n = 12$, we find that
\begin{align*}
    C_{E,12,1}^{\Prime} = \tfrac{3}{7} C_E^{\Prime}, \quad
    C_{E,12,5}^{\Prime} = 0, \quad
    C_{E,12,7}^{\Prime} = \tfrac{4}{7} C_E^{\Prime}, \quad
    C_{E,12,11}^{\Prime} = 0
\end{align*}
where
\[
C_E^{\Prime} \approx 0.785814.
\]
Running our Magma function \texttt{CyclicityAP} on $E$ with $n = 12$, we find that
\begin{align*}
    C_{E,12,1}^{\Cyc} = \tfrac{3}{19} C_E^{\Cyc}, \quad
    C_{E,12,5}^{\Cyc} = \tfrac{6}{19} C_E^{\Cyc}, \quad
    C_{E,12,7}^{\Cyc} = \tfrac{4}{19} C_E^{\Cyc}, \quad
    C_{E,12,11}^{\Cyc} = \tfrac{6}{19} C_E^{\Cyc}.
\end{align*}
where
\[
C_E^{\Cyc} \approx 0.789512.
\]
As with the previous example, these values agree well with the numerical data for the curve, which is available in our GitHub repository \cite{LMW-GitHub}.

\subsection{Example 4} Let $n=6$ and $E$ be the CM elliptic curve with LMFDB label \texttt{432.d1} defined by 
\begin{equation}\label{CM-model}
y^2=x^3-4.
\end{equation}
We keep the notation from \sectionref{GalRepCM}. From the LMFDB, we know that 
\begin{enumerate}
\item $E$ has CM by the maximal order $\mathcal{O} \coloneqq \Z\left[\frac{1+\sqrt{-3}}{2}\right]$ and the CM field $K = \Q(\sqrt{-3})$. 
\item $E$ has  discriminant $\Delta_E=- 2^{8}  3^{3}$. So  2 and 3 are the only primes of bad reduction for $E$.
\item The map
\[
 \overline{\rho}_{E, \ell} : \Gal(\overline{K}/K) \longrightarrow \left(\mathcal{O}/\ell\mathcal{O}\right)^{\times}
\]
is surjective for all primes $\ell$.
\end{enumerate}
Invoking the proof of \cite[Proposition 2.7]{MR2805578}, we see that $m_E$ is only supported by $2$ and $3$. Further,
\[
 f=1, \; L= n_1=n=6, \; n_2=1.
\]
Therefore, by (\ref{lengthyproduct}), 
\begin{equation*}
\begin{split}
    C_{E,6,k}^{\Prime} =   \frac{3}{2} \frac{|G_E(6)\cap \Psi^{\Prime}_{K, 6, k}(6)|}{|G_E(6)|}
    \cdot  \prod_{\ell \nmid 6}\left(1-\chi_K(\ell)\frac{\ell^2-\ell-1}{(\ell-\chi_K(\ell))(\ell-1)^2}\right). 
\end{split}
\end{equation*}
By adapting Sutherland's \texttt{Galrep} code \cite{Su2016}, we compute $G_E(6)$ in Magma and find that
\[
\left|\Psi^{\Prime}_{K,6,k}(6)\cap G_E(6) \right|
=\begin{cases}
    2 & \text{ if }k \equiv 1 \neghs\pmod{6}, \\
    0 & \text{ if } k \equiv 5 \neghs\pmod{6}.
\end{cases}
\]
Thus, we conclude that
\[ C_{E,6,1}^{\Prime} = C_{E}^{\Prime} \quad \text{and} \quad C_{E,6,5}^{\Prime} =0 \]
where
\[
C_{E}^{\Prime} = \frac{1}{2}
    \cdot  \prod_{\ell \nmid 6}\left(1-\chi_K(\ell)\frac{\ell^2-\ell-1}{(\ell-\chi_K(\ell))(\ell-1)^2}\right) \approx 0.505448.
\]

In fact, we can verify that $C^{\Prime}_{E,6,5} = 0$ using Deuring's criterion. Let $p$ be a rational prime such that  $p \equiv 5 \pmod 6$. Then $p$ is inert in the CM field, $\Q(\sqrt{-3})$. By Deuring's criterion $p$ is supersingular, and hence $|\widetilde{E}_p(\F_p)| = p+1$. Since $p$ is an odd prime, we see that $p$ cannot be a prime of Koblitz reduction for $E$.

\bibliographystyle{amsplain}
\bibliography{References}

\end{document}